\definecolor{darkblue} {rgb} {0, 0, 0.545} 
\definecolor{darkgreen} {rgb} {0, 0.392, 0} 
\crefname{subsection}{Subsection}{Subsections}
\crefname{equation}{Equation}{Equations}
\crefname{figure}{Figure}{Figures}
\theoremstyle{definition} 
\newtheorem{definition}{Definition}[section] 
\newtheorem{remark}[definition]{Remark}
\theoremstyle{plain} 
\newtheorem{theorem}[definition]{Theorem}
\newtheorem{lemma}[definition]{Lemma}
\newcommand{\R}{\mathbb{R}} 
\newcommand{\ie}{{\it{i.e.}, }}
\newcommand{\del}{\partial}
\newcommand{\tr}{\textnormal} 
\newcommand{\ds}{\displaystyle}
\newcommand{\x}{\mathbf{x}}
\newcommand{\D}{\mathcal{D}}  				
\newcommand{\B}{\mathcal{B}} 				
\newcommand{\F}{\mathcal{F}} 				
\newcommand{\bond}{\mathrm{Bo}} 				
\newcommand{\n}{\mathbf{\hat{n}}}  				
\newcommand{\avgop}{M}                          		
\newcommand{\bxi}{\bm{\xi}} 					
\begin{document}
\title{High Spots for the Ice-Fishing \\ Problem with Surface Tension} 
\thanks{C.H. Tan and B. Osting acknowledge partial support from NSF DMS 17-52202.} 
\author{Nathan Willis} 
\address{Department of Mathematics, University of Utah, Salt Lake City, UT} 
\email{willis@math.utah.edu} 
\author{Chee Han Tan}
\address{Department of Mathematics, Bucknell University, Lewisburg, PA} 
\email{cht007@bucknell.edu} 
\author{Christel Hohenegger} 
\address{Department of Mathematics, University of Utah, Salt Lake City, UT} 
\email{choheneg@math.utah.edu} 
\author{Braxton Osting} 
\address{Department of Mathematics, University of Utah, Salt Lake City, UT} 
\email{osting@math.utah.edu} 

\keywords{fluid sloshing, surface tension, high spots conjecture, generalized eigenvalue problem, orthogonal polynomials} 
\subjclass[2010]{76B10, 76B45, 65R15, 33C45, 45C05, 35P15, 47G20} 

\date{\today}


\begin{abstract} 
In the ice-fishing problem, a half-space of fluid lies below an infinite rigid plate (``the ice'') with a hole.  In this paper, we investigate the ice-fishing problem including the effects of surface tension on the free surface. The dimensionless number that describes the effect of surface tension is called the Bond number.  For holes that are infinite parallel strips or circular holes, we transform the problem to an equivalent eigenvalue integro-differential equation on an interval and expand in the appropriate basis (Legendre and radial polynomials, respectively). We use computational methods to demonstrate that the high spot, \ie the maximal elevation of the fundamental sloshing profile, for the IFP is in the interior of the free surface for large Bond numbers, but for sufficiently small Bond number the high spot is on the boundary of the free surface. While several papers have proven high spot results in the absence of surface tension as it depends on the shape of the container, as far as we are aware, this is the first study investigating the effects of surface tension on the location of the high spot. \end{abstract} 

\maketitle


\section{Introduction} \label{sec:Intro}
Sloshing refers to the motion of a liquid free surface, \ie the interface between the liquid in the container and the air above, inside partially filled containers \cite{Ibrahim, Faltinsen}. Liquid sloshing is a ubiquitous phenomenon, ranging from the oscillation of fuel in road tank vehicles and liquid-propellant rockets to seiches in lakes and harbors induced by earthquakes to our everyday experience in carrying a cup of coffee. Liquid sloshing has detrimental impacts on the stability and structural safety of stationary or moving vessels. For example, violent fuel sloshing within spacecraft fuel tanks produces highly localized pressure on tank walls, leading to deviation from its planned flight path or compromising its structural integrity.

Surface tension, defined as a force per unit length, is the intermolecular force required to contract the liquid surface to its minimal surface area. Examples of surface tension effects include the nearly spherical shape of liquid droplets and the ability of small insects to walk on water. The dimensionless parameter measuring the relative magnitudes of gravitational and surface tension forces is referred to as the Bond number and given by $\bond = \rho g\ell_c^2/\sigma$, where $\rho > 0$ is the constant fluid density, $\ell_c$ is a characteristic length scale of the container, and $\sigma > 0$ is the surface tension coefficient. For example, in a microgravity environment, the magnitude of body forces is tiny and surface tension forces predominate. Mathematically, surface tension is incorporated into the sloshing model via the Young-Laplace equation, $ \Delta P = -2\sigma H$, which asserts that the pressure difference $\Delta P$ between the inside and the outside of the fluid free surface is proportional to the mean surface curvature $H$. Recently a variational characterization of fluid sloshing with surface tension was derived in \cite{tan:2017} and an isoperimetric problem was considered in \cite{Tan2021}. 

In this paper, we investigate the ice-fishing problem (IFP), including the effects of surface tension on the free surface. The  IFP studies the problem of free oscillations of an incompressible, inviscid fluid for an irrotational flow in a half space bounded above by an infinite rigid plane where the free surface is some aperture in the plane; see \cref{fig:IFP_domain_mono}. We consider the cases where the aperture is either a circular hole or an infinite parallel strip. We denote the equilibrium free surface by $\F$, the wetted boundary by $\B$, and the fluid domain by $\D$. 

\begin{figure}[tbhp] 
\begin{center} 
\begin{tikzpicture} [scale =0.7, every node/.style={scale=0.7}]
    
	\def \r{5};

	\begin{scope}
		\clip (-\r,0) rectangle (\r,-\r);
		\filldraw[color = white, even odd rule,inner color=blue,outer color=white] (0,0) circle (\r);
	\end{scope}

	\def \s{0}; 
	\draw [darkblue, line width = 3.5pt,stealth-] (-\r - \s, 0) - - (-1 - \s, 0); 
	\draw [darkblue, line width = 3.5pt,-stealth] (1 - \s, 0) - - (\r - \s, 0) node [right] {\large $x$}; 
	\draw [ultra thick, -Stealth] (0 - \s, 0) - - (0 - \s, 1.7) node [above] {\large $z$}; 
	\draw (-1 - \s, 0.2) node [above] {\large $-1$}; 
	\draw (1 - \s, 0.2) node [above] {\large $1$}; 
	\draw (0 - \s, -3.5) node {\large $\mathcal{D}$}; 
	\draw [darkblue] (2.5 - \s, 0.5) node [] {\large $\mathcal{B}$}; 
	\draw (0.1, 0.3) node [ right] {\large $\mathcal{F}$}; 
	
	\draw [red, ultra thick] (-1, 0) arc(130:410:1.55cm); 
	\draw [green!80!black, ultra thick] plot[smooth, domain = -1:1] (\x, {1*((\x)^4+2*(\x)^3-2*\x-1)}); 
	\draw [green!80!black] (0.4, -0.6) node {\large $\overline {\mathcal{D}}$}; 
	\draw [red] (-0.3, -1.75) node {\large $\widetilde{\mathcal{D}}$}; 
		\filldraw [black] (-1 - \s, 0) circle (5pt); 
	\filldraw [black] (1 - \s, 0) circle (5pt); 
\end{tikzpicture} 
\end{center} 
\caption[Fluid domain for a cross section of the infinite parallel strip ice-fishing problem]{An illustration of the fluid domain $\D$ and boundary $\B\cup\F\cup\del\F$ for a cross section of the infinite parallel strip. Multiple bounded domains are shown to demonstrate the domain monotonicity property of the fundamental sloshing frequency, \ie $\omega_1^2(\overline{\D})\le \omega_1^2(\widetilde{\D})$.} 
\label{fig:IFP_domain_mono}
\end{figure}
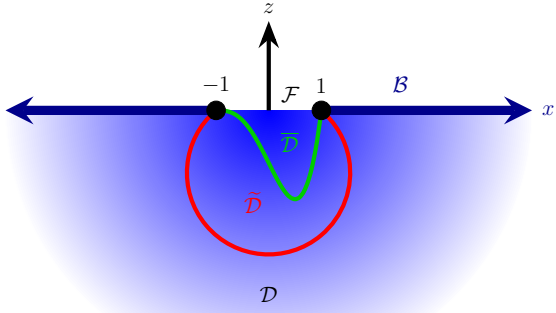


\subsection{Previous results} 
In the absence of surface tension, Moiseev \cite{moiseev:1964} established the property of domain monotonicity for the square of the fundamental (smallest) sloshing frequency $\omega_1^2$. Namely, for any two bounded containers $\overline{\D}, \widetilde{\D}$ with an identical $\F$ and $\overline{\D}\subset\widetilde{\D}$, we have that $\omega_1^2(\overline{\D})\le \omega_1^2(\widetilde{\D})$; see \cref{fig:IFP_domain_mono}. This result is an immediate consequence of the variational characterization of $\omega_1^2$.  It follows that the fundamental sloshing frequency for the IFP furnishes the universal upper bound for the fundamental sloshing frequency of arbitrary containers with coinciding $\F$. In the presence of surface tension, it was shown that this domain monotonicity result continues to hold for a free surface that is freely allowed to move at its boundary \cite{tan:2017}.

The IFP with zero surface tension has been well-studied in the past few decades. Davis \cite{davis:1970} reformulated the infinite parallel strip problem as an integral equation involving Green's function. He expressed the velocity potential as the infinite sum of Legendre polynomials and applied the principle of deformation of contours to give $\{\omega_j^2\}_{j = 1}^\infty$ as the eigenvalues of an infinite symmetric matrix. Furthermore, he obtained approximations for $\omega_j^2$ by computing the eigenvalues of the truncated matrix and derived a fourth-order asymptotic expansion for higher eigenvalues. Henrici, Troesch, and Wuytack \cite{henrici:1970} formulated the IFP with a circular or strip-like aperture including a decay condition at infinity for the fluid velocity field and derived an equivalent Fredholm integral equation for the velocity potential in the aperture using potential theory. Miles \cite{miles:1972} recasts the IFP as formulated by Henrici et al. \cite{henrici:1970} to a homogeneous Fredholm integral equation for the velocity distribution in the aperture. Troesch \cite{troesch:1973} transformed the IFP onto a bounded domain using the Kelvin inversion. Fox and Kuttler \cite{fox:1983} transformed the infinite parallel strip problem into an equivalent weighted problem on a semi-infinite strip employing a conformal map. Most authors computed upper bounds for $\omega_j^2$ on their equivalent problems using the Rayleigh-Ritz method.

Of particular interest is the problem of determining the location of the high spots, \ie the maximal elevation of the sloshing profile $\xi$. {\it Unless otherwise noted, when discussing the high spots it is assumed that $\xi$ is the fundamental sloshing profile.} The study of these high spots is motivated as a fluid-analogue to the Hot Spots Conjecture: ``For any second eigenfunction of the Neumann Laplacian, the extremal values of this eigenfunction are only attained on the boundary of the triangle'' \cite{polymath}. Indeed, \cite[Proposition 3.1]{kulczycki:2009} proved that the hot spots problem and the high spots problem are equivalent for an upright cylindrical tank in the absence of surface tension. In two dimensions, the high spot is located on $\del\F$ for any $\B$ that can be written as a negative $C^2$ function on $\F$ such that $\B$ is not tangent to $\F$ at their common endpoints \cite{kulczycki:2009}. This result extends to three dimensions when considering the previously described two-dimensional container as the cross section of a finite canal \cite{kulczycki:2011}. For a radially-symmetric, convex, bounded container such that $\D$ is contained in the upright cylinder $\F\times (-\infty, 0)$, the high spot is located on $\del\F$ \cite{kulczycki:2012}. Concerning the IFP, it is known that the high spot is located in the interior of $\F$ when $\F$ is either a circular hole or an infinite parallel strip \cite{kulczycki:2009}. All these results rely on the property that $\xi$ is proportional to the trace of the fundamental sloshing mode $\Phi_1$ on $\overline\F$ when the fluid oscillates freely with the fundamental sloshing frequency. Finally, it was conjectured that for a bounded planar domain with smooth $\B$ such that at least one angle between $\B$ and $\F$ is greater than $\pi/2$, the high spot is located in the interior of $\F$ \cite{kulczycki:2009}.


\subsection{Main results} \label{sec:HS1} 
In this paper, we investigate how the presence of surface tension affects the location of the high spot for IFP, focusing on an infinite parallel strip and a circular hole. We use computational tools to demonstrate that the high spot is in the interior of $\F$ for large $\bond$, but for sufficiently small $\bond$ the high spot is on $\del\F$. We plot the location of the high spot on the $x$-axis in the infinite parallel strip in \cref{fig:IFP_HighSpot_b} and the circular hole in \cref{fig:IFP_HighSpot_c} for varying $\bond$. The  fundamental sloshing profiles for $\bond=1,20,100$ are shown in \cref{fig:IFP_HighSpot_a}. In both cases, \emph{we observe that as $\bond$ increases, the high spot moves from the boundary of $\F$ to the interior of $\F$}. This transition happens at $\bond^*=8.98461$ for the infinite parallel strip and at $\bond^*=4.63462$ for the circular hole. The vertical asymptote corresponds to the high spot location for $\bond\to\infty$, \ie in the absence of surface tension. To obtain these results we reduce the three-dimensional problem to a one-dimensional integro-differential equation. This one-dimensional problem is then solved approximately in orthogonal polynomial spaces, specifically Legendre polynomials for the infinite parallel strip and radial polynomials for the circular hole, resulting in a generalized eigenvalue problem for sloshing frequencies. For a circular hole, the critical value $\bond^*$ is obtained by finding the value of $\bond$ at which the concavity of the sloshing profile at the boundary switches from positive to negative. We show that this condition can be reformulated as a fixed point problem, yielding a precise value of $\bond^*$. 

\begin{figure}[tbhp] 
\centering
	\subfloat[Infinite parallel strip]{\label{fig:IFP_HighSpot_b}
	\includegraphics[width = 0.32\textwidth,trim = {0.5cm 0.18cm 2cm 1.8cm},clip=true]{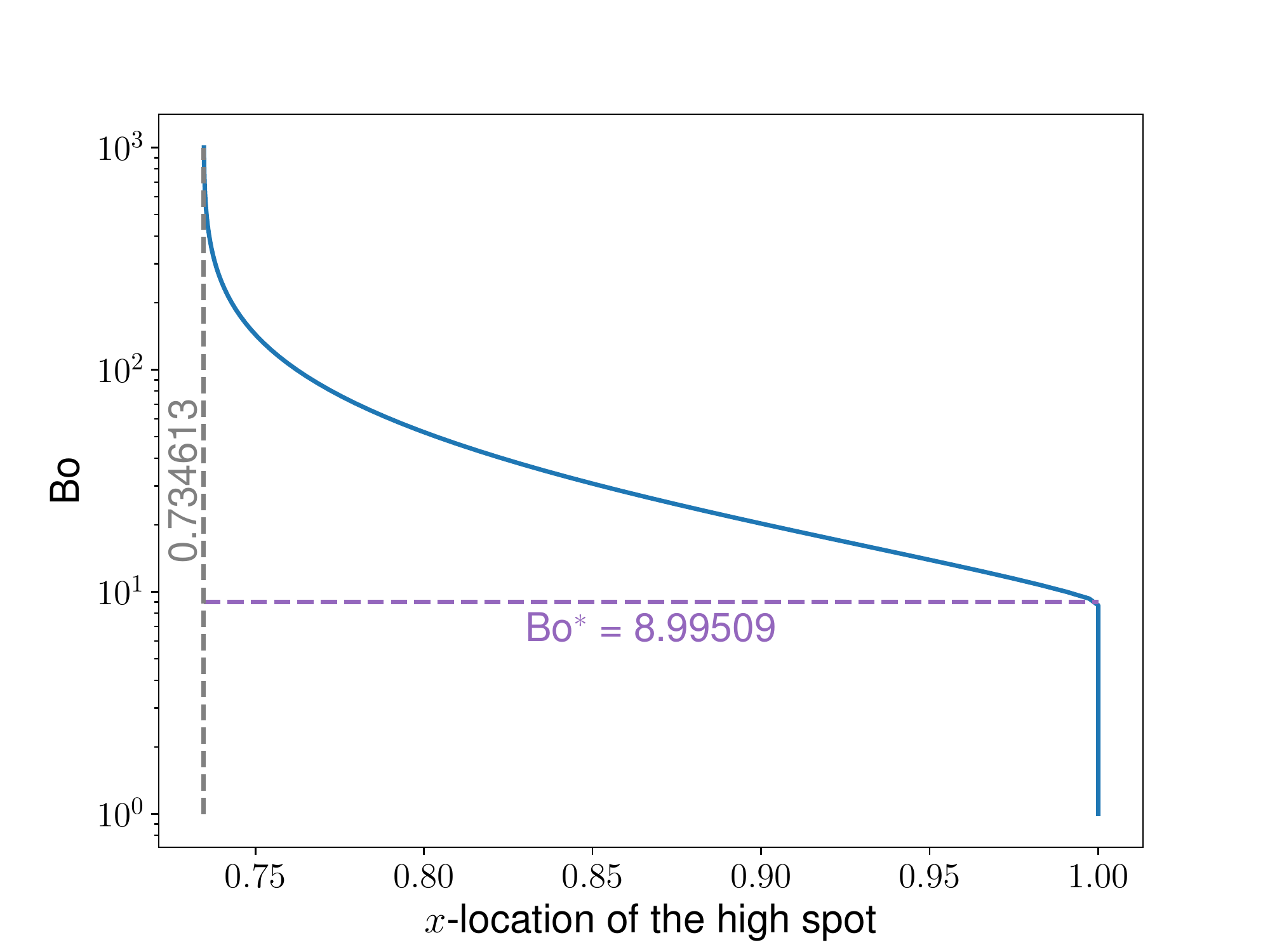}}
	\subfloat[Circular hole]{\label{fig:IFP_HighSpot_c}
	\includegraphics[width = 0.32\textwidth,trim = {0.5cm 0.18cm 2cm 1.8cm},clip=true]{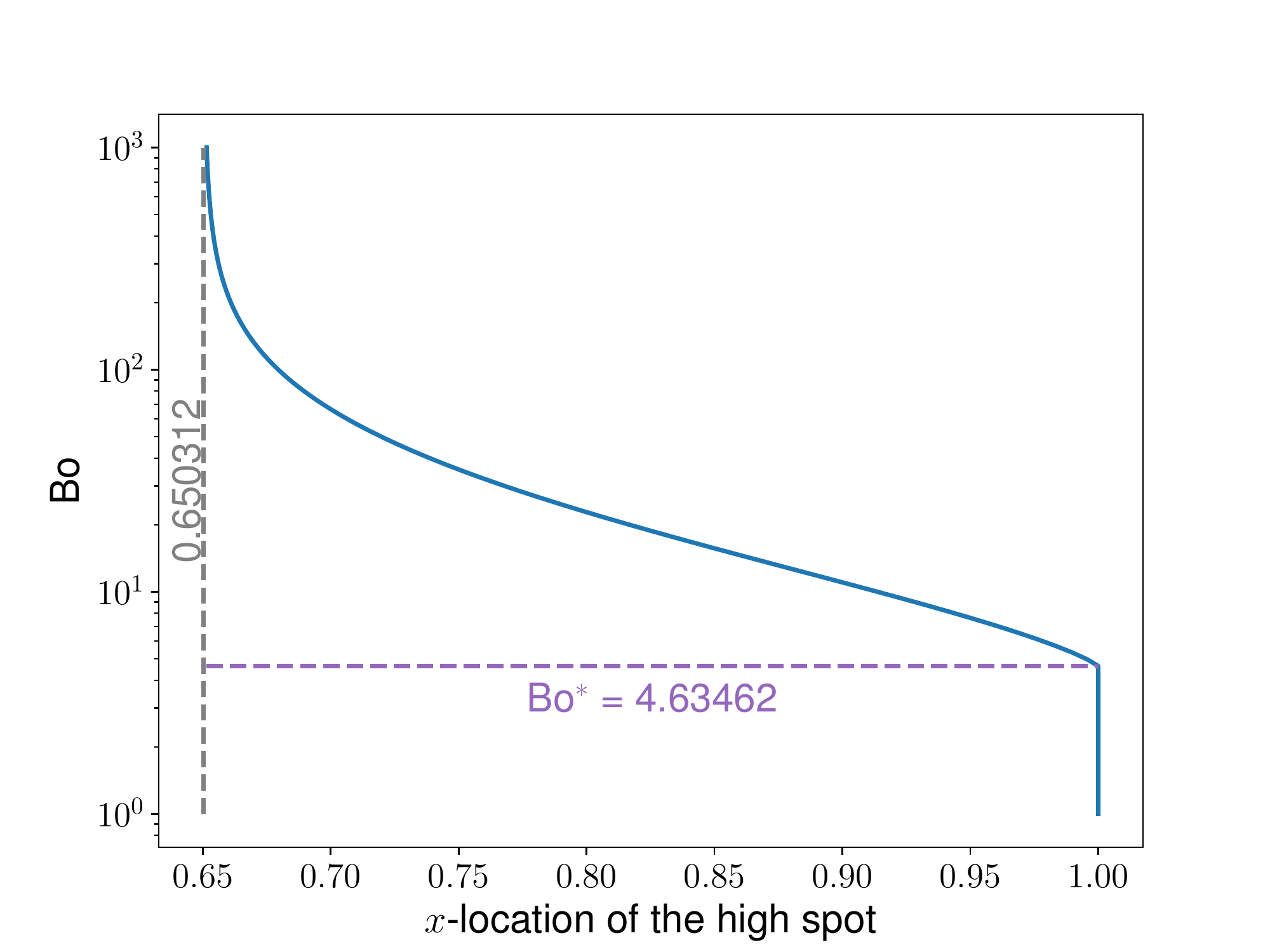}} 
	\subfloat[Fundamental sloshing profile]{\label{fig:IFP_HighSpot_a}\includegraphics[width = 0.32\textwidth,trim = {0.5cm 0.2cm 2cm 1.8cm},clip=true]{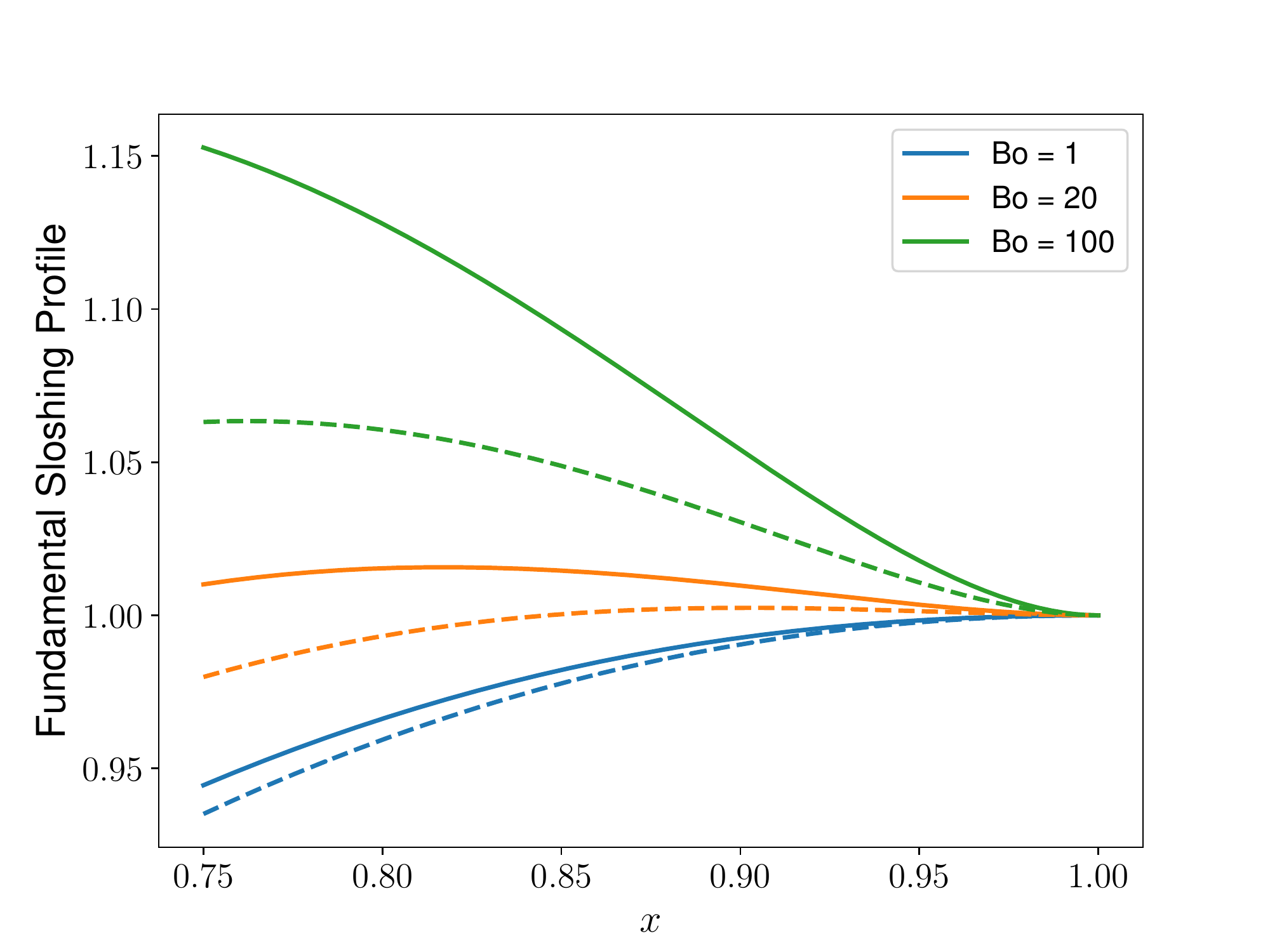}}
\caption{Plot of the location of the high spot for (a) an infinite parallel strip and (b) a circular hole for varying $\bond$. The location of the high spot for the limiting case $\bond \to \infty$ is marked with a vertical asymptote. (c) The fundamental sloshing profiles near the boundary $x = 1$ for both the infinite parallel strip (dashed lines) and the circular hole (solid lines) for $\bond = 1, 20, 100$.}
\label{fig:IFP_HighSpot}
\end{figure} 


\subsection{Outline} 
This paper is structured as follows. 
In \cref{s:modelDeriv}, we describe the derivation of IFP with surface tension and an equivalent Fredholm integro-differential equation. In \cref{sec:strip,sec:hole}, we describe the reduction of this equation when the aperture is an infinite parallel strip and a circular hole, respectively. In each section, we start by writing out the specific integro-differential equation, before deriving the associated weak form. The weak form is numerically solved in a Legendre polynomial space in \cref{sec:Legendre} and in a radial polynomial space in \cref{sec:Legendre3d}, yielding a generalized eigenvalue problem. Examples of sloshing profiles and frequencies as well as convergence of the numerical schemes are presented in \cref{sec:numerics,sec:numerics_hole}. In \cref{sec:HS}, we provide a justification for the location of the high spot for a radial hole that is based on determining the concavity at $r = 1$. This justification depends on the numerical results, but is intended to provide a basis for future analytical work on this point. We conclude in \cref{sec:IFP_disc} with a discussion.


\section{Model Derivation} \label{s:modelDeriv} 
The IFP can be described as the limiting case of the linear sloshing problem in a bounded domain with identical aperture; see \cite[Appendix A]{tan:2017} for a detailed derivation of the linear sloshing problem with surface tension on a bounded domain. We choose the halfwidth of the equilibrium free surface $\F$ as the characteristic length scale $\ell_c$ and nondimensionalize all lengths by $\ell_c$, time by $t_c\coloneqq\sqrt{\ell_c/g}$, and velocity by $\ell_c/t_c$. Let $\x = (x, y, z)$ be dimensionless Cartesian coordinates such that the $z$-axis is directed vertically upward, perpendicular to the ice sheet. Let $\Phi(\x)$ be the velocity potential with a time-harmonic factor $\cos(\omega t)$ removed and $\xi(x,y)$ be the sloshing profile, \ie the free surface displacement, with a time-harmonic factor $\sin(\omega t)$ removed, where $\omega$ is the natural sloshing frequency. Then, the IFP with Neumann boundary conditions is a dimensionless linear boundary spectral problem for $(\omega, \Phi, \xi)$ defined by: 
\begin{subequations} \label{eq:IFP} 
\begin{alignat}{3}  
\label{eq:IFP1} \Delta\Phi & = 0 && \ \ \tr{ in } \ \ && \D = \left\{\x\in\R^3\colon z < 0\right\}, \\ 
\label{eq:IFP2} \del_z\Phi & = 0 && \ \ \tr{ on } \ \ && \B , \\ 
\label{eq:IFP3} \del_z\Phi & = \omega\xi && \ \ \tr{ on } \ \ && \F, \\ 
\label{eq:IFP4} \xi - \frac{1}{\bond}\Delta_{\F}\xi & = \omega\Phi && \ \ \tr{ on } \ \ && \F, \\ 
\label{eq:IFP5} \partial_{\n_\F}\xi & = 0 && \ \ \tr{ on } && \del\F ,\\
\label{eq:IFP6}\int_\F \xi\, dA & = 0. 
\end{alignat} 
\end{subequations} 
In the above equations, $\Delta_{\F}$ is the Laplacian operator on the free surface, and $\partial_{\n_\F}$ is the derivative in the direction normal to the boundary of the free surface in the plane $z = 0$. The two cases we consider are
\begin{equation*}
\begin{array}{r c l | r c l}
\multicolumn{3}{c|}{\text{infinite parallel strip}}  & \multicolumn{3}{c}{\text{circular hole}} \\ \hline
    \F&=&\left\{\x\in\R^3\colon |x|<1, z=0\right\}  &     \F&=&\left\{\x\in\R^3\colon |(x,y)|<1, z=0\right\} \\
    \B&=&\left\{\x\in\R^3\colon |x|>1,z=0\right\}   &     \B&=&\left\{\x\in\R^3\colon |(x,y)|>1,z=0\right\} \\
\del\F&=&\{\x\in\R^3\colon |x| = 1, z = 0\}         & \del\F&=&\{\x\in\R^3\colon |(x,y)|=1,z=0\}
\end{array}
\end{equation*} 
The boundary condition \cref{eq:IFP5} is necessary as surface tension introduces the linearized curvature term $\Delta_\F\xi$ into the model. It is known as the \emph{contact line boundary condition} and it prescribes how the fluid free surface moves along the container wall. In this paper we consider the \emph{free-end edge constraint} $\del_{\n_\F}\xi = 0$ on $\del\F$. 

\begin{remark} 
For $(\omega, \Phi, \xi)$ satisfying \cref{eq:IFP}, we have that \cref{eq:IFP6} additionally gives no flow at infinity, \ie $\left|\nabla \Phi\right| \to 0 \tr{ as } |\x| \to \infty$. This result was shown in \cite{henrici:1970} by expanding the kernel of the solution operator in spherical harmonics. 
\end{remark} 

We solve \cref{eq:IFP} by first expressing $\Phi$ as an integral operator acting on the Neumann boundary data $\omega\xi$, satisfying \cref{eq:IFP1}-\cref{eq:IFP3}. We then plug this expression for $\Phi$ into \cref{eq:IFP4} to derive an equivalent eigenvalue integro-differential equation for $(\omega, \xi)$. We start by considering the Laplace problem with compactly supported Neumann data
\begin{subequations} \label{eq:Lap} 
\begin{alignat}{3}  
\label{eq:Lap1} \Delta\Phi(\x) & = 0 && \ \ \tr{ in } \ \ && \D = \left\{\x\in\R^3\colon z < 0\right\}, \\ 
\label{eq:Lap2} \del_z\Phi(\x) & = f(\x)\chi_{\F}(\x)&& \ \ \tr{ on } \ \ && \del\D = \left\{\x\in\R^3\colon z = 0\right\}.
\end{alignat} 
\end{subequations}
 
 \begin{remark} \label{rem:Uni}
By defining $f = \omega\xi$ it is easy to see that \cref{eq:IFP5} is the compatibility condition, $\int_{\del\D} f\, d\x = 0$, guaranteeing the existence and uniqueness, up to a constant, of the Neumann problem for $\Phi$ in the half-space.
\end{remark}

\begin{remark} \label{rem:uni_xsi}
If $(\omega, \Phi, \xi)$ is a solution of \cref{eq:IFP}, then so are $(\omega, -\Phi, -\xi), (-\omega, -\Phi, \xi)$, and $(-\omega, \Phi, -\xi)$. There is no contradiction with the previous remark as for a given $(\omega, \xi)$ pair, $\Phi$ is unique up to a constant.
\end{remark} 

\begin{definition} \label{def:S}
Let $S$ be the operator, such that $\Phi(\x)$ satisfying \cref{eq:Lap} can be written as $\Phi(\x) = S[f\chi_{\F}](\x) +\Phi_\infty$, where $\Phi_\infty$ is constant and $Sf(\x)\to 0$ as $|\x|\to\infty$.
\end{definition}
Examples of $S$ for  the infinite strip and circular holes are given in \cref{sec:strip,sec:hole}, respectively. Comparing \cref{eq:Lap2} and \cref{eq:IFP2}-\cref{eq:IFP3}, we set $f = \omega\xi$, and plug in \cref{eq:IFP4}. This gives the following equation on the free surface, 
\begin{equation} \label{eq:IDEmid}
\xi - \frac{1}{\bond}\Delta_{\F} \xi = \omega^2 \widehat{S}\xi + \omega\Phi_\infty. 
\end{equation}
Here $\widehat{S}$ is the restriction of $S$ to the free surface $z = 0$. To remove $\Phi$ from this equation entirely, resulting in an eigenvalue integro-differential equation for $(\omega^2, \xi)$, we utilize \cref{eq:IFP6}. Motivated by \cite{henrici:1970}, we define the mean value operator to be $\avgop f= \frac{1}{|\F|} \int_{\F} f\, dA$ and next we consider $(I-\avgop)$ where $I$ is the identity operator. Note that $(I-\avgop)$ applied to a constant yields 0 and $(I-\avgop)\xi = \xi$. Therefore, applying $(I-\avgop)$ to \cref{eq:IDEmid} and including the appropriate conditions on $\xi$ yields
\begin{subequations} \label{eq:IDEmain}
    \begin{alignat}{3}
        \label{eq:IDEmain1} \xi - \frac{1}{\bond}(I-\avgop)\Delta_{\F} \xi &= \omega^2 (I-\avgop)\widehat{S}\xi && \ \ \tr{ on } \ \ && \F, \\ 
        \label{eq:IDEmain2} \partial_{\n_\F}\xi & = 0 && \ \ \tr{ on } && \del\F ,
    \end{alignat}
\end{subequations}
together with $\int_\F \xi\, dA  = 0.$
The following theorem establishes equivalence between solutions of  \cref{eq:IFP} and \cref{eq:IDEmain}.
\begin{theorem} \label{thm:Equiv}
    Let $S$ be as in \cref{def:S} with $f = \omega\xi$. If $(\omega, \xi, \Phi)$ is a solution to \cref{eq:IFP} then $(\omega^2, \xi)$ solves \cref{eq:IDEmain}. Moreover, if $(\omega^2, \xi)$ is a solution to \cref{eq:IDEmain} and we define $\Phi = \omega S \xi - \omega \avgop \widehat{S}\xi - \frac{1}{\omega \bond}\avgop \Delta \xi,$ then $(\omega, \xi, \Phi)$ is a solution to \cref{eq:IFP}.
\end{theorem}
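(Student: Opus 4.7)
The proof is naturally two-directional, and the bookkeeping revolves around the mean-value operator $\avgop$ and how it couples the unknown constant $\Phi_\infty$ from \cref{def:S} to the compatibility / zero-mean conditions.

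For the forward direction, suppose $(\omega,\Phi,\xi)$ solves \cref{eq:IFP}. The Neumann data $\partial_z\Phi = \omega\xi\chi_{\F}$ on $\partial\D$ combined with \cref{eq:IFP6} is exactly the compatibility condition in \cref{rem:Uni}, so \cref{def:S} applies and we may write $\Phi = \omega S\xi + \Phi_\infty$ in $\D$. Restricting to $\F$ and substituting into \cref{eq:IFP4} gives \cref{eq:IDEmid}:
\begin{equation*}
\xi - \tfrac{1}{\bond}\Delta_{\F}\xi \;=\; \omega^2\widehat{S}\xi + \omega\Phi_\infty.
\end{equation*}
Applying $(I-\avgop)$ eliminates the constant $\omega\Phi_\infty$ and, using $\avgop\xi = 0$ from \cref{eq:IFP6}, leaves $\xi$ unchanged, producing \cref{eq:IDEmain1}. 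Condition \cref{eq:IDEmain2} is literally \cref{eq:IFP5}, and the zero-mean condition transfers directly.

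For the reverse direction, I would define $\Phi = \omega S\xi - \omega\avgop\widehat{S}\xi - \tfrac{1}{\omega\bond}\avgop\Delta_{\F}\xi$ as stated and verify \cref{eq:IFP1}--\cref{eq:IFP6} one at a time. Since the last two terms are spatial constants, $\Delta\Phi = \omega\Delta(S\xi) = 0$ in $\D$ by the harmonicity built into \cref{def:S}, giving \cref{eq:IFP1}; the boundary condition of \cref{def:S} gives $\partial_z(S\xi) = \xi\chi_{\F}$ on $\{z=0\}$, which yields \cref{eq:IFP2} and \cref{eq:IFP3} at once. The key computation is \cref{eq:IFP4}: on $\F$,
\begin{equation*}
\omega\Phi \;=\; \omega^2\widehat{S}\xi - \omega^2\avgop\widehat{S}\xi - \tfrac{1}{\bond}\avgop\Delta_{\F}\xi \;=\; \omega^2(I-\avgop)\widehat{S}\xi - \tfrac{1}{\bond}\avgop\Delta_{\F}\xi,
\end{equation*}
and substituting \cref{eq:IDEmain1} for $\omega^2(I-\avgop)\widehat{S}\xi$ collapses the two $\avgop\Delta_{\F}\xi$ contributions into $-\tfrac{1}{\bond}\Delta_{\F}\xi$, recovering \cref{eq:IFP4}. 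Conditions \cref{eq:IFP5} and \cref{eq:IFP6} come for free from the hypotheses on $\xi$.

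The only conceptual point requiring care — and the one I expect to be the main obstacle — is the bookkeeping of the hidden constant $\Phi_\infty$: the forward direction kills it with $(I-\avgop)$, and the reverse direction must re-introduce exactly $-\omega\avgop\widehat{S}\xi - \tfrac{1}{\omega\bond}\avgop\Delta_{\F}\xi$ so that the non-projected version \cref{eq:IDEmid} holds. The rest of the verification is algebraic manipulation that exploits $\avgop c = c$ for constants, $\avgop\xi = 0$, and the harmonicity/Neumann properties packaged into $S$.
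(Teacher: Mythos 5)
Your proposal is correct and follows essentially the same route as the paper's proof: the forward direction is the derivation of \cref{eq:IDEmain} from \cref{eq:IDEmid} via $(I-\avgop)$, and the reverse direction verifies \cref{eq:IFP1}--\cref{eq:IFP3} from the properties of $S$ and the constancy of the correction terms, then recovers \cref{eq:IFP4} by exactly the substitution $\omega\Phi = \omega^2(I-\avgop)\widehat{S}\xi - \tfrac{1}{\bond}\avgop\Delta_\F\xi$ combined with \cref{eq:IDEmain1}. Your write-up is in fact somewhat more explicit than the paper's about the role of the hidden constant $\Phi_\infty$, but the underlying argument is identical.
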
 

\begin{proof} 
The first statement is a direct result of the derivation of \cref{eq:IDEmain} along with \cref{rem:Uni}. The second statement follows from the facts that $\omega \avgop \widehat{S}\xi$ and $\frac{1}{\omega \bond}\avgop \Delta \xi$ are constant and therefore \cref{eq:IFP1}-\cref{eq:IFP3} are trivially satisfied due to the operator $S$. Restricting the definition of $\Phi$ to the free surface, we have $\Phi = \omega \left( I -\avgop \right) \widehat{S}\xi - \frac{1}{\omega \bond}\avgop \Delta \xi$. Therefore, from \cref{eq:IDEmain1} we find
    \begin{equation*}
        \omega \Phi = \xi - \frac{1}{\bond}(I-\avgop)\Delta \xi - \frac{1}{\bond}\avgop \Delta \xi
    \end{equation*}
    such that $(\omega, \xi,\Phi)$ satisfies \cref{eq:IFP4}.
\end{proof}


\section{Infinite parallel strip}\label{sec:strip}
In the case of the infinite parallel strip, we assume no dependence on $y$ and therefore consider a cross section as in \cref{fig:IFP_domain_mono}. Using \cite{henrici:1970,wang:2014}, we have the following representation of bounded solutions. 

\begin{lemma} \label{lem:IDE_strip}
In the case of an infinite parallel strip with $\Phi(\x) = \Phi(x, z)$, there exists a bounded solution to \cref{eq:Lap} of the form $\Phi(x, z) = S[f\chi_{[-1,1]}](x, z) + \Phi_\infty$, where
\begin{equation*}
S[f\chi_{[-1,1]}](x, z) = -\frac{1}{2\pi}\int_{-1}^1 \ln\left((x - s)^2 + z^2\right)f(s)\, ds, \ \ \Phi_\infty = \lim_{|(x,z)|\to\infty} \Phi(x, z).  
\end{equation*}  
\end{lemma}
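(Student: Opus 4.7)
My plan is to construct $\Phi$ as a half-space single-layer potential sourced by the Neumann data $f\chi_{[-1,1]}$ on the aperture, then verify, in turn, harmonicity in $\D$, the prescribed normal derivative on $\partial\D$, and decay as $|(x,z)|\to\infty$. The ansatz
$$
S[f\chi_{[-1,1]}](x,z) \;=\; -\frac{1}{2\pi}\int_{-1}^{1} \ln\!\bigl((x-s)^2+z^2\bigr)\, f(s)\, ds
$$
is natural since $\tfrac{1}{2}\ln\!\bigl((x-s)^2+z^2\bigr)=\ln r$ is (up to a constant) the two-dimensional fundamental solution of the Laplacian centered at $(s,0)$; the prefactor $-\tfrac{1}{2\pi}$ is calibrated so that the one-sided normal derivative from $z<0$ recovers exactly $f(x)$ on $(-1,1)$.

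First, I would verify $\Delta\Phi = 0$ in $\D$ by differentiating under the integral sign. For each $(x,z)\in\D$ the source points $(s,0)\in\partial\D$ lie at positive distance from $(x,z)$, so the kernel and all its derivatives are jointly continuous and uniformly bounded in $s\in[-1,1]$, legitimizing the exchange of $\Delta$ and $\int$; pointwise harmonicity of $\ln r$ off the source then gives $\Delta\Phi\equiv 0$ in $\D$. Next I would compute
$$
\partial_z \Phi(x,z) \;=\; \frac{1}{\pi}\int_{-1}^{1} \frac{-z}{(x-s)^2 + z^2}\, f(s)\, ds, \qquad z<0,
$$
and recognize $\tfrac{-z/\pi}{(x-s)^2+z^2}$ as the Poisson kernel of the upper half-plane at height $y=-z>0$. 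Since Poisson integrals form an approximate identity as $y\to 0^+$, we obtain $\partial_z\Phi(x,0^-) = f(x)$ at Lebesgue points $x\in(-1,1)$; for $|x|>1$, the integrand vanishes uniformly on $[-1,1]$ as $z\to 0^-$ and dominated convergence yields $\partial_z\Phi(x,0^-)=0$, so the Neumann condition $\partial_z\Phi = f\chi_{[-1,1]}$ on $\partial\D$ is recovered.

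For the decay $S[f\chi_{[-1,1]}](x,z)\to 0$ as $|(x,z)|\to\infty$, I would use the compatibility condition $\int_{-1}^{1}f\,ds=0$ inherited from~\cref{eq:IFP6} when $f=\omega\xi$. Subtracting the constant $\ln(x^2+z^2)$ inside the integral (which integrates to zero against $f$) yields
$$
S[f\chi_{[-1,1]}](x,z) \;=\; -\frac{1}{2\pi}\int_{-1}^{1}\!\Bigl[\ln\!\bigl((x-s)^2+z^2\bigr) - \ln(x^2+z^2)\Bigr] f(s)\, ds,
$$
and a Taylor expansion of the bracket as $|(x,z)|\to\infty$ shows it is $O\!\bigl(|s|/|(x,z)|\bigr)$ uniformly in $s\in[-1,1]$, so the integral decays to zero. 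I expect the most delicate step to be the identification of the Neumann trace on the aperture, since one must pass to a limit in a weakly singular integral and verify that the one-sided Poisson-kernel normalization matches the stated coefficient $-\tfrac{1}{2\pi}$; harmonicity in $\D$ and decay at infinity then follow from routine differentiation under the integral and the compatibility condition, respectively.
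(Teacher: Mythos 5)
Your argument is correct, and it takes a genuinely more self-contained route than the paper: the paper's proof of this lemma is essentially a pointer to the literature (existence up to a constant from the compatibility condition, decay of $Sf$ from \cite{vannekoski:2014}, and the verification that $\Phi$ solves \cref{eq:Lap} and is bounded from \cite{wang:2014}), whereas you carry out the potential-theoretic verification explicitly. Your three steps are the right ones and the normalizations check out: differentiation under the integral sign is legitimate because the source points $(s,0)$ lie at distance at least $|z|>0$ from $(x,z)\in\D$; the $z$-derivative of the kernel is exactly the upper half-plane Poisson kernel at height $-z>0$, so the approximate-identity property gives $\partial_z\Phi(x,0^-)=f(x)$ on $(-1,1)$, while for $|x|>1$ the kernel tends to zero uniformly on $[-1,1]$; and subtracting $\ln(x^2+z^2)$ inside the integral, which is permitted precisely because $\int_{-1}^1 f\,ds=0$, yields the $O\bigl(1/|(x,z)|\bigr)$ decay. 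What your version buys is a complete, checkable proof; what the paper's version buys is brevity. Two small points you should make explicit. First, the zero-mean condition on $f$ is not a stated hypothesis of the lemma; it enters through $f=\omega\xi$ and \cref{eq:IFP6} (cf.\ \cref{rem:Uni}), and without it $Sf$ grows like $\ln|(x,z)|$ at infinity, so your decay step --- like the paper's --- silently relies on it. Second, the lemma asserts a \emph{bounded} solution, so besides harmonicity and decay you should add one line observing that the logarithmic kernel is uniformly integrable (indeed uniformly in $L^2$ with respect to $s$) for $(x,z)$ in any bounded neighborhood of the closed aperture, so that $Sf$ remains bounded as $(x,z)$ approaches $[-1,1]\times\{0\}$; this is routine given the weak singularity of $\ln r$, but it is part of the claim.
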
 
\begin{proof}
The existence up to a constant follows from the compatibility condition. Since $(Sf)(x, z)\to 0$ as $\left|(x ,z)\right|\to\infty$ \cite{vannekoski:2014}, the constant is $\Phi_\infty=\lim_{|(x,z)|\to\infty} \Phi(x,z)$. The facts that $\Phi$ satisfies \cref{eq:Lap} and that $\Phi$ is bounded can be found in \cite{wang:2014}. 
\end{proof}
\cref{lem:IDE_strip} gives the form of $S$ as in \cref{def:S} with restriction to $z=0$ given by $\widehat S\xi(x) = -\frac{1}{\pi}\int_{-1}^1 \ln|x - s|\xi(s)\, ds.$ Therefore, using \cref{thm:Equiv}, plugging into \cref{eq:IDEmain} and keeping in mind that $\Phi(\x)=\Phi(x,z)$, we are seeking solutions of the following integro-differential equation
\begin{subequations} \label{eq:IFP_integral} 
\begin{alignat}{1} 
\label{eq:IFP_integral1} \xi - \frac{1}{\bond}(I - M)\xi'' & = \omega^2(I - M)\widehat S\xi = \lambda\widehat S\xi \ \ \tr{ on $(-1, 1)$}, \\ 
\label{eq:IFP_integral2} \xi'(\pm 1) & = 0, \qquad \int_{-1}^1 \xi\, dx = 0. 
\end{alignat} 
\end{subequations} 

We now turn our attention to seeking weak eigenpairs of \cref{eq:IFP_integral}. Define the Hilbert space $V = \left\{\xi\in H^2(-1, 1)\colon \xi'(\pm 1) = 0, \, \int_{-1}^1 \xi\, dx = 0\right\}$. 

\begin{definition} \label{def:weak} 
We say that $(\omega, \xi)\in\R\times V$ is a weak sloshing eigenpair of \cref{eq:IFP_integral} if the following holds for all $g\in V$: 
\begin{equation} \label{eq:IFP_weak}
\int_{-1}^1 \left(\xi g + \frac{1}{\bond}\xi'g'\right)\, dx = -\frac{\omega^2}{\pi}\int_{-1}^1\int_{-1}^1 \ln|x - s|\xi(s)g(x)\, ds dx. 
\end{equation} 
\end{definition} 
\cref{eq:IFP_weak} is formally obtained by multiplying \cref{eq:IFP_integral1} by $g$, integrating from $-1$ to $1$, using integration by parts with the prescribed boundary conditions, and simplifying with the facts that $M\xi''$ and $M\widehat{S}\xi$ are constant and $\int_{-1}^1 g dx = 0$.


\subsection{Polynomial Approximation} \label{sec:Legendre} 
We seek approximate solutions to \cref{eq:IFP_weak} in a finite-dimensional polynomial space. Let $p_j$ denote the normalized Legendre polynomial of degree $j\ge 1$ on $[-1, 1]$ with respect to the weight $1$, \ie $\int_{-1}^1 p_jp_k\, dx = \delta_{jk}$. Following \cite{shen:1994}, we define the spaces $P^{(n)} = \mathrm{span}\left\{p_0(x), p_1(x), \dots, p_n(x)\right\}$ and $W^{(n)} = \left\{v\in P^{(n)}\colon v'(\pm 1) = 0, \, \int_{-1}^1 v\, dx = 0\right\}.$ Suppose $(\omega, \xi)$ is a weak sloshing eigenpair of the IFP, as in \cref{def:weak}, and let $\xi^{(n)}$ be the orthogonal projection of $\xi$ to $W^{(n)}$ with respect to the $L^2(-1,1)$ inner product. In general \cite[Equation 9.4.10]{Canuto} we have 
\begin{equation*}
    \left\|\xi - \xi^{(n)}\right\|_{H^\ell (-1, 1)} \leq C n^{-1/2}n^{2\ell-m}\|\xi\|_{H^m(-1, 1)}
\end{equation*} 
for $\xi \in H^m(-1,1)$ with $1 \leq \ell \leq m$. In particular, since $\xi\in V \subset H^2(-1, 1)$ we have the a-priori convergence result
that $\xi^{(n)}\to\xi$ in $H^1(-1, 1)$ as $n^{-1/2}$ as $n\to\infty$. Furthermore, the convergence will be faster if $\xi$ is more regular and, in practice, we observe a higher rate of convergence; see \cref{sec:numerics}. 

For every $j = 1, 2, 3, \dots$, define the polynomial $q_j = (p_j - \beta_j p_{j+ 2})/\alpha_j$. First, we choose $\beta_j = \dfrac{j(j + 1)\sqrt{2j + 1}}{(j + 2)(j + 3)\sqrt{2j + 5}}$ to satisfy the Neumann boundary condition. Note that if Dirichlet boundary conditions were prescribed, one would simply define $\beta_j = \dfrac{\sqrt{2j + 1}}{\sqrt{2j + 5}}$, \ie all the boundary data is stored in the constants $\beta_j$. Next, we choose $\alpha_j = \sqrt{1 + \beta_j^2}$ to normalize the $q_j$ polynomials such that $\int_{-1}^1 q_j^2\, dx = 1$. 
\begin{lemma}
The set of polynomials $\{q_j\}_{j=1}^{n-2}$ for $n=3,4,\ldots$ constitutes a basis for $W^{(n)}$.
\end{lemma}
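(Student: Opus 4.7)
The plan is to prove three things: (i) each $q_j$ for $j=1,\ldots,n-2$ lies in $W^{(n)}$, (ii) the family $\{q_j\}_{j=1}^{n-2}$ is linearly independent, and (iii) $\dim W^{(n)}=n-2$, at which point a spanning set of the right cardinality is automatically a basis.

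First I would verify membership. Since $q_j$ is a linear combination of $p_j$ and $p_{j+2}$ with $1\le j\le n-2$, its degree is $j+2\le n$, so $q_j\in P^{(n)}$. The mean-zero condition follows from orthogonality: both $p_j$ and $p_{j+2}$ are orthogonal to the constant $p_0$, hence $\int_{-1}^{1} q_j\,dx=0$. The Neumann condition $q_j'(\pm 1)=0$ is precisely what motivates the choice of $\beta_j$; one uses the standard identities $p_j(1)=\sqrt{(2j+1)/2}$, $p_j(-1)=(-1)^j\sqrt{(2j+1)/2}$, and $p_j'(\pm 1)=(\pm 1)^{j-1}\sqrt{(2j+1)/2}\,j(j+1)/2$, and then checks that the specified $\beta_j$ makes the linear combination $p_j'(\pm 1)-\beta_j p_{j+2}'(\pm 1)$ vanish at both endpoints simultaneously (this works precisely because $p_j$ and $p_{j+2}$ have the same parity, so one identity at $x=1$ forces the analogous identity at $x=-1$).

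Second, linear independence is immediate: each $q_j$ has leading term proportional to $p_{j+2}$, so the $q_j$'s have pairwise distinct degrees. Equivalently, the change-of-basis matrix expressing $(q_1,\ldots,q_{n-2})$ in terms of $(p_3,\ldots,p_n)$ is upper triangular with nonzero diagonal entries $-\beta_j/\alpha_j$.

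The main obstacle, and the final step, is the dimension count $\dim W^{(n)}=n-2$. Since $\dim P^{(n)}=n+1$, it suffices to show that the three linear functionals
\begin{equation*}
L_1(v)=v'(1),\qquad L_2(v)=v'(-1),\qquad L_3(v)=\int_{-1}^{1} v\,dx
\end{equation*}
are linearly independent on $P^{(n)}$ for $n\ge 3$. I would verify this by exhibiting an explicit $3\times 3$ block: evaluate $L_1,L_2,L_3$ on $p_0,p_1,p_2$ using the endpoint and orthogonality formulas above. One obtains a matrix whose determinant is a nonzero multiple of $\sqrt{15}$, confirming independence. Therefore $W^{(n)}$ is the intersection of three independent hyperplanes in $P^{(n)}$, which has dimension $n+1-3=n-2$. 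Combining (i)--(iii), the independent family $\{q_j\}_{j=1}^{n-2}\subset W^{(n)}$ has cardinality equal to $\dim W^{(n)}$ and is therefore a basis.
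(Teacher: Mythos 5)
Your proof is correct, and it is genuinely more informative than the paper's, which simply cites Shen (1994) and Auteri et al. (2003) without giving an argument. You supply the standard self-contained reasoning behind such ``compact combination'' bases: membership (degree bound, mean-zero from orthogonality to $p_0$, and the Neumann condition enforced by $\beta_j$, with the parity observation correctly explaining why one constant handles both endpoints), linear independence from distinct degrees, and the dimension count $\dim W^{(n)} = (n+1) - 3 = n-2$ via independence of the three constraint functionals, which your $3\times 3$ block on $\operatorname{span}\{p_0,p_1,p_2\}$ establishes (the determinant is indeed $-3\sqrt{30}\neq 0$). What the paper's citation buys is brevity and a pointer to the general framework in which these bases are constructed for spectral methods; what your argument buys is verifiability, and in particular it confirms the dimension count, which is the one step a reader cannot check by inspection. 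One small imprecision: your parenthetical claim that the change-of-basis matrix from $(q_1,\dots,q_{n-2})$ to $(p_3,\dots,p_n)$ is upper triangular is not literally correct, since $q_1$ and $q_2$ contain components along $p_1$ and $p_2$, which lie outside that span; but this is only offered as an alternative phrasing, and your primary argument---that the $q_j$ have pairwise distinct degrees $j+2$ because $\beta_j\neq 0$---is airtight on its own.
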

\begin{proof}
The statement follows from \cite{shen:1994,auteri:2003}.
\end{proof}

Therefore, the discrete weak formulation of the IFP is to find $(\omega, \xi^{(n)}) \in \R \times W^{(n)}$ such that for all $g\in W^{(n)}$: 
\begin{equation} \label{eq:IFP_discrete}
\int_{-1}^1 \left(\xi^{(n)} g + \frac{1}{\bond} \left(\xi^{(n)}\right)' g'\right)\, dx = -\frac{\omega^2}{\pi}\int_{-1}^1\int_{-1}^1 \ln|x - s|\xi^{(n)}(s)g(x)\, dsdx.
\end{equation} 
\begin{theorem}\label{thm:discrete_strip}
If $(\omega,\xi^{(n)})\in\R\times W^{(n)}$ solves the discrete IFP \cref{eq:IFP_discrete}, then $(\omega,\xi^{(n)})$ solves the generalized eigenvalue problem for $i=1,2,\ldots,n-2$
\begin{equation} \label{eq:IDE_numerics}  
\sum_{j = 1}^{n - 2} c_j \left(M_{ij} + \frac{1}{\bond}K_{ij}\right) = \omega^2 \sum_{j = 1}^{n - 2} c_j L_{ij}.
\end{equation} 
Here $M_{ij} = \int_{-1}^1 q_i(x)q_j(x)\, dx$ is the mass matrix, $K_{ij} = \int_{-1}^1 q_i'(x)q_j'(x)\, dx$ is the stiffness matrix, and $L_{ij} = -\int_{-1}^1\int_{-1}^1 \ln|x - s|q_i(x)q_j(s)\, dsdx$, which are given by
\begin{align*} 
\small
M_{ij} &= \delta_{ij} -\frac{\beta_{i}}{\alpha_{i}\alpha_{i+2}}\delta_{i+2,j} -\frac{\beta_{i-2}}{\alpha_{i-2}\alpha_{i}} \delta_{i-2,j}, \\
K_{ij} &= \frac{\beta_j \sqrt{2j + 5}}{\alpha_j^2 \sqrt{2j+1} }(2j + 1)(2j + 3) \delta_{ij}, \\
L_{ij} &= \frac{-\tilde{L}_{ij} + \beta_i \tilde{L}_{i+2,j} + \beta_j \tilde{L}_{i,j+2} - \beta_i \beta_j \tilde{L}_{i+2, j+2}}{\alpha_i \alpha_j}, \\
\tilde{L}_{ij} &= \begin{dcases}
\, \frac{4}{\pi}\frac{\sqrt{2i + 1}\sqrt{2j + 1}}{(i + j)(i + j + 2)(1 - (i - j)^2)} & \ \  \tr{ if } i+j \text{ is even, } \\
\, 0 & \ \ \tr{ otherwise}. 
\end{dcases}   
\end{align*}
\end{theorem}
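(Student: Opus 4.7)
The plan is to expand in the basis $\{q_j\}_{j=1}^{n-2}$ of $W^{(n)}$, substitute into the weak formulation, and reduce each matrix entry to a known Legendre integral. Writing $\xi^{(n)} = \sum_{j=1}^{n-2} c_j q_j$ and testing \cref{eq:IFP_discrete} against $g = q_i$ for $i = 1, \ldots, n-2$ immediately yields \cref{eq:IDE_numerics} by bilinearity, with $M_{ij}$, $K_{ij}$, and $L_{ij}$ defined as the claimed integrals of products of the basis functions. It then remains to evaluate each entry in closed form.

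For $M_{ij}$, I would expand the product $q_i q_j$ into four terms using $q_j = (p_j - \beta_j p_{j+2})/\alpha_j$ and integrate termwise with the orthonormality $\int_{-1}^1 p_j p_k \, dx = \delta_{jk}$. Collecting the four resulting Kronecker deltas and applying $\alpha_j^2 = 1 + \beta_j^2$ to the diagonal contribution produces the stated tridiagonal formula. For $K_{ij}$, I would integrate by parts once to obtain $K_{ij} = -\int_{-1}^1 q_i\, q_j''\, dx$; the boundary terms vanish precisely because $\beta_j$ was engineered so that $q_j'(\pm 1) = 0$. To establish that $K$ is diagonal, I would use the classical identity $(2k+1)P_k = P_{k+1}' - P_{k-1}'$ to expand $p_j''$ as a finite combination of $\{p_k\}_{k \leq j-2,\, k \equiv j \!\!\pmod 2}$, substitute into $q_j''$, pair against $q_i$, and verify that the specific form of $\beta_j$ produces telescoping cancellations that eliminate every off-diagonal term. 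The surviving diagonal coefficient, after substitution of $\beta_j$, should collapse to the stated expression.

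For $L_{ij}$, bilinearity splits the double integral into the four-term combination of $\tilde L_{ij} = \int\int \ln|x-s|\, p_i(x) p_j(s)\, ds\, dx$ displayed in the statement. To evaluate $\tilde L_{ij}$ in closed form, I would use the classical identity expressing $\int_{-1}^1 \ln|x-s|\, P_k(s)\, ds$ as a linear combination of $P_{k\pm 1}(x)$ (with a separate treatment for $k = 0$), and then integrate against $p_i(x)$ using orthonormality. The kernel $\ln|x-s|$ is invariant under $(x,s)\mapsto(-x,-s)$ while $p_i(x)\, p_j(s)$ picks up a sign $(-1)^{i+j}$, which forces $\tilde L_{ij} = 0$ when $i+j$ is odd; algebraic simplification in the even case yields the stated rational expression. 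I expect the main obstacle to be the stiffness-matrix computation, since showing the diagonality of $K$ requires careful bookkeeping of the cancellations induced by the Neumann-adapted choice of $\beta_j$; the $M$ and $L$ computations, while tedious, reduce to routine applications of classical Legendre identities.
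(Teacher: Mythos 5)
Your overall strategy coincides with the paper's: expand $\xi^{(n)}=\sum_j c_j q_j$, test against $g=q_i$ to get \cref{eq:IDE_numerics} by bilinearity, and reduce each matrix entry to classical Legendre integrals; the $M_{ij}$ computation is identical. For $K_{ij}$, your plan to expand $p_j''$ into lower-degree Legendre polynomials and chase telescoping cancellations for every off-diagonal pair is workable (the cancellations do occur), but it is much harder than necessary and slightly misplaces the role of $\beta_j$. After integrating by parts, $K_{ij}=-\int_{-1}^1 q_i\,q_j''\,dx$; since $q_j''$ has degree $j$ while $q_i$ is a combination of $p_i$ and $p_{i+2}$, the entry vanishes for $i>j$ by pure degree counting, and the symmetry of $K_{ij}=\int_{-1}^1 q_i'q_j'\,dx$ then disposes of $i<j$ with no further computation. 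The specific value of $\beta_j$ enters only through the vanishing of the boundary terms in the integration by parts. This is the paper's argument, and it leaves only the diagonal entry $K_{jj}=\frac{\beta_j}{\alpha_j^2}\int_{-1}^1 p_{j+2}''\,p_j\,dx$ to evaluate.

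The genuine problem is in your evaluation of $\tilde L_{ij}$. The identity you invoke --- that $\int_{-1}^1\ln|x-s|\,P_k(s)\,ds$ is a linear combination of $P_{k\pm1}(x)$ --- is false: the correct classical formula (due to Davis, whom the paper cites for precisely this step) is
\begin{equation*}
\int_{-1}^1\ln|x-s|\,P_k(s)\,ds \;=\; \frac{2}{2k+1}\bigl(Q_{k+1}(x)-Q_{k-1}(x)\bigr), \qquad k\ge 1,
\end{equation*}
where $Q_n$ denotes the Legendre function of the \emph{second} kind. The left-hand side is not a polynomial: its $x$-derivative is the principal-value integral $2Q_k(x)$, which is logarithmically singular at $x=\pm1$. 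Indeed, if your version were correct, $\tilde L_{ij}$ would be supported on $|i-j|=1$, contradicting the dense formula you are trying to establish (nonzero for all $i+j$ even). The repair is to use the $Q$-version above together with the classical evaluation of $\int_{-1}^1 P_m(x)Q_n(x)\,dx$, which is nonzero exactly when $m+n$ is odd; with $n=j\pm1$ this reproduces both your (correct) parity observation and, after simplification, the stated rational expression for $\tilde L_{ij}$.
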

\begin{proof}
We write $\xi^{(n)} = \sum_{j = 1}^{n - 2} c_j q_j$ and choose $g = q_i$ in \cref{eq:IFP_discrete}. Expanding the $q_i$ and $q_j$ polynomials and using \cite{davis:1970} we find the expression for $L_{ij}$. To compute $M_{ij}$ and $K_{ij}$, it suffices to consider the case where $i\le j$ since they are symmetric expressions. Using orthonormality of the Legendre polynomials, $p_i$, the expression for $M_{ij}$ follows. To obtain $K_{ij}$, we integrate by parts and use the fact that $q_i'(\pm 1) = 0$ for $W^{(n)}$ to get
\begin{equation*}
\small
    K_{ij} = -\frac{1}{\alpha_j}\left[\int_{-1}^1 q_i''p_j\, dx - \beta_m\int_{-1}^1 q_i''p_{j + 2}\, dx\right].
\end{equation*}
Since $q_i''$ is at most degree $i$ and the Legendre polynomial $p_i$ is orthogonal to any polynomial of lower degree, we see that $K_{ij} = 0$ if $i < j$ and 
\begin{equation*} 
K_{jj} = \frac{\beta_j}{\alpha_j^2}\int_{-1}^1 p_{j + 2}''p_j\, dx = \frac{\beta_j \sqrt{2j + 5}}{\alpha_j^2 \sqrt{2j+1} }(2j + 1)(2j + 3).
\end{equation*} 
The final value for $K_{jj}$ follows from  the orthonormality of $p_j$ the polynomials togehter with the well-known relationship,  
\begin{equation*} 
p_j''(x) = \sqrt{2j+1}\sum_{\substack{k=0 \\ k+j \text{ even}}}^{j-2} \left(k +\frac{1}{2}\right) \Big( j(j+1)-k(k+1) \Big) \frac{p_k(x)}{\sqrt{2k+1}}.
\end{equation*} 
\end{proof}


\subsection{Numerical Results} \label{sec:numerics} 
We now solve \cref{eq:IDE_numerics}, which is a generalized eigenvalue problem of the form 
\begin{equation} \label{eq:IFP_GEP} 
\left(\mathbf{M} + \frac{1}{\bond}\mathbf{K}\right) \bxi = \lambda\mathbf{L}\bxi, \ \ \lambda\coloneqq\omega^2, 
\end{equation}  
where $\left(\bm{\xi}\right)_i = c_i$ and $\mathbf{M}$, $\mathbf{K}$, $\mathbf{L}$, are square matrices. The mass matrix $\mathbf{M}$ is pentadiagonal, the stiffness matrix $\mathbf{K}$ is diagonal, and the matrix $\mathbf{L}$ is dense, but exhibits a checkerboard pattern as $L_{ij} = 0$ if $i+j$ is odd. Specifically, the only nonzero diagonals for $\mathbf{M}$ are the main diagonal and the second sub and super diagonals. It follows from \cref{thm:Equiv} that the eigenvalues $\lambda_j$ of \cref{eq:IFP_GEP} approximate the first $n$ of the natural sloshing frequencies of IFP squared. 

\begin{table}[tbhp] 
\begin{center}
    \begin{tabular}{|c | c | c | c || c |}\hline
              & $\bond = 1$  & $\bond = 10$ & $\bond = 50$ & $\bond = \infty$ \\ \hline
        $j=1$ & $7.0326$     & $2.5238$     & $2.1162$     & $2.0061$         \\ \hline
        $j=2$ & $37.7871$    & $6.9032$     & $4.1530$     & $3.4533$         \\ \hline
        $j=3$ & $119.0961$   & $16.5480$    & $7.4307$     & $5.1253$         \\ \hline
    \end{tabular} 
\end{center}
\caption{First three eigenvalues for the Infinite Parallel Strip IFP with  $\bond=\{1,10,50,\infty\}$.} 
\label{table:IFP_eigs_Neumann} 
\end{table} 

The first three eigenvalues of \cref{eq:IFP_GEP} for $\bond = 1, 10, 50,\infty$ with $n = 200$ are shown in \cref{table:IFP_eigs_Neumann}. 
To validate our solution to the infinite parallel strip IFP for $\bond = \infty$ we compared the eigenvalues to those found in Fox et al. \cite{fox:1983} and found that for all eigenvalues reported our results fit within the bounds provided. A table of all previous results \cite{henrici:1970,davis:1970,miles:1972} is given in \cite{fox:1983} and it is the case that Fox et al. provides the tightest bounds among the zero surface tension results. To get an understanding of the surface tension effects on the fundamental sloshing frequency we note that $\ds \frac{\lambda_1(\bond = 1)}{\lambda_1(\bond = \infty)} = 3.506$. That is, when the force due to surface tension is comparable to the gravitational force the fundamental sloshing frequency is increased to more than 300\% of the value when surface tension is negligible. 

\begin{figure}[tbhp]  
	\begin{center}
			\subfloat[$\lambda_1,\lambda_2$ convergence]{\label{fig:IFP_convergence_a}\includegraphics[width = 0.49\textwidth]{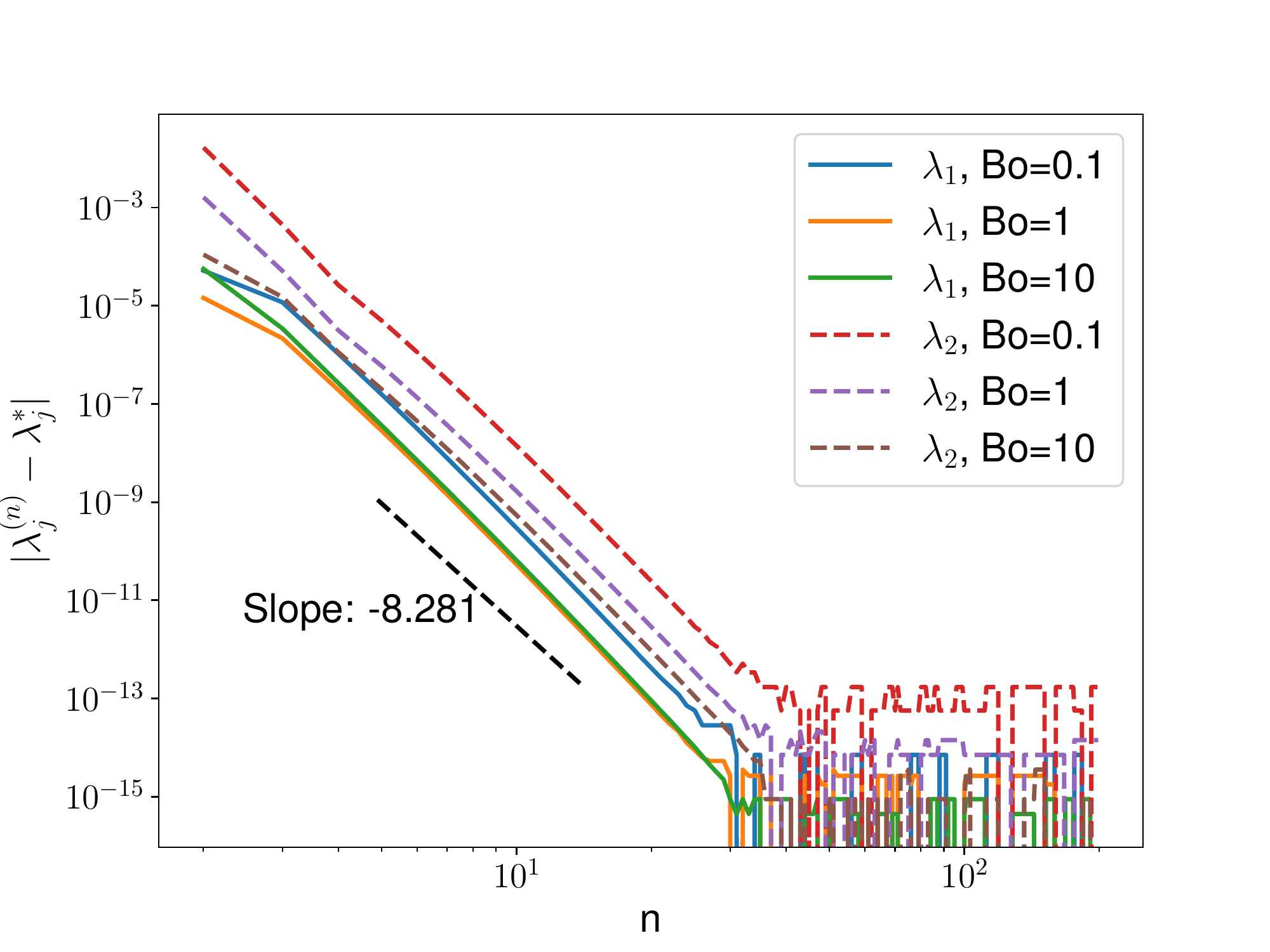}}  
			\subfloat[$\xi_1,\xi_2$ convergence]{\label{fig:IFP_convergence_b}\includegraphics[width = 0.49\textwidth]{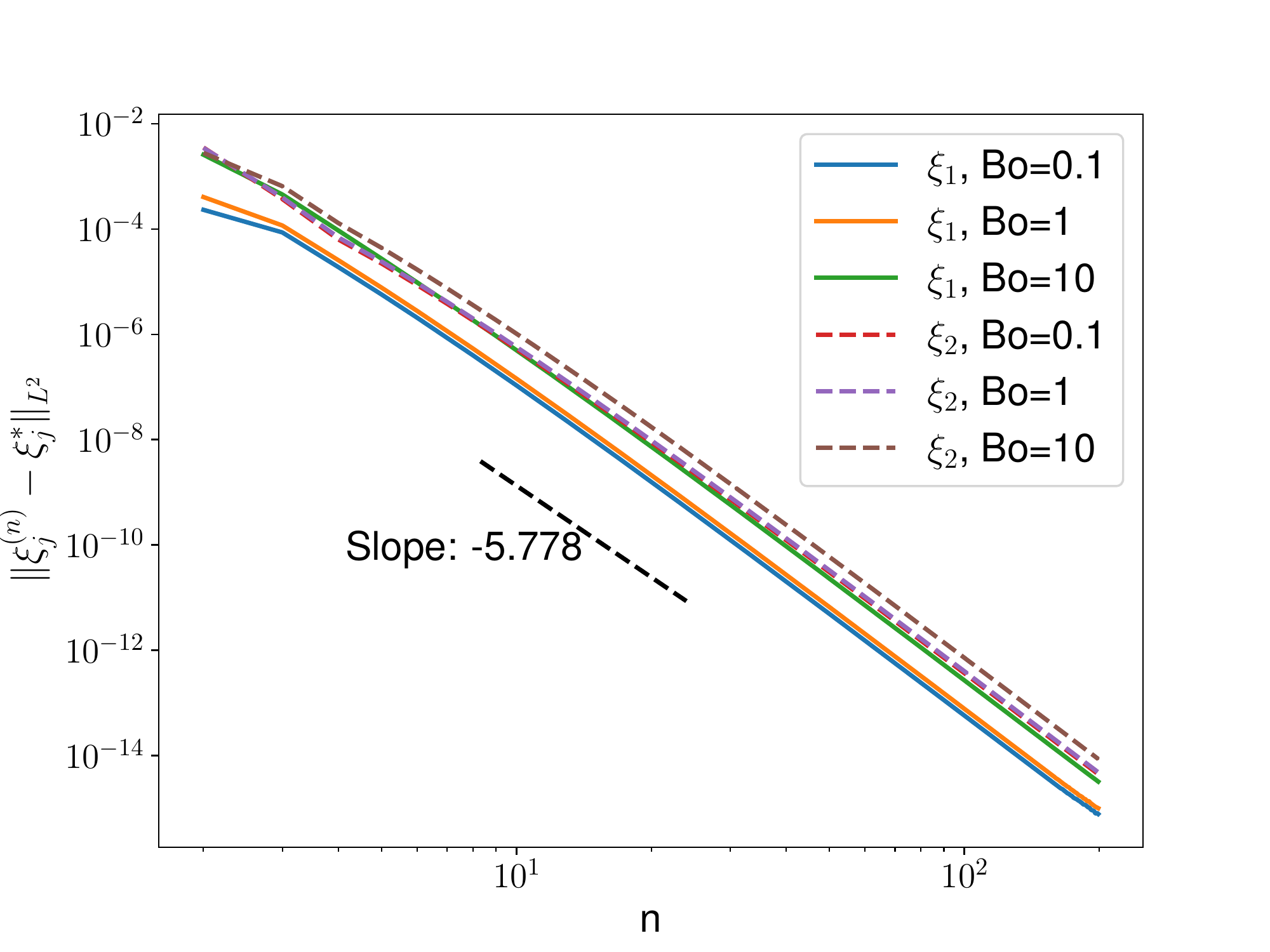}} 
		\caption{Log-log convergence plots of the first (solid) and second (dashed) eigenvalues and their corresponding sloshing profile for $\bond = 0.1, 1, 10$. The true solution is approximated with $n=200$.} 
		\label{fig:IFP_convergence} 
	\end{center}
\end{figure} 

\cref{fig:IFP_convergence_a} illustrates the convergence plots of the first two sloshing frequencies, \ie $j=1,2$, and their corresponding sloshing profiles in \cref{fig:IFP_convergence_b}  for $\bond =\{0.1,1,10\}$. The true solution $(\lambda_j^*,\xi_j^*)$ is the highly resolved ($n=2000$) solution from our method. We observe high rates of convergence for finite $\bond$ for both the sloshing frequencies and the sloshing profiles. We note that the observed rate of convergence for $\xi^{(n)}\to \xi$ is much higher than the a-proiri estimate as discussed above. 

The first three sloshing profiles for $j=1,2,3$ are shown in \cref{fig:IFP_NeumannProfiles} for $\bond=1$ in \cref{fig:IFP_NeumannProfiles_a}, $\bond=1000$ in \cref{fig:IFP_NeumannProfiles_b} and $\bond=\infty$ corresponding to the no surface tension case in \cref{fig:IFP_NeumannProfiles_c}. The sloshing profiles for IFP appear to be unchanged for $\bond <10$. Most interesting is the behavior of the high spot of the fundamental sloshing profile  for moderate $\bond$. This will be further discussed in \cref{sec:HS}, here we simply note that for small $\bond$ the high spot is located on $\del\F$ whereas for large $\bond$ the high spot has moved to interior of $\F$. This phenomenon is further demonstrated for $\bond\in[10^0,10^3]$ in \cref{fig:IFP_HighSpot_b}. 

\begin{figure}[tbhp] 
	\begin{center} 
			\subfloat[$\bond=1$]{\label{fig:IFP_NeumannProfiles_a}\includegraphics[width = 0.325\textwidth,trim = {0.1cm 0.3cm 2cm 1.75cm},clip=true]{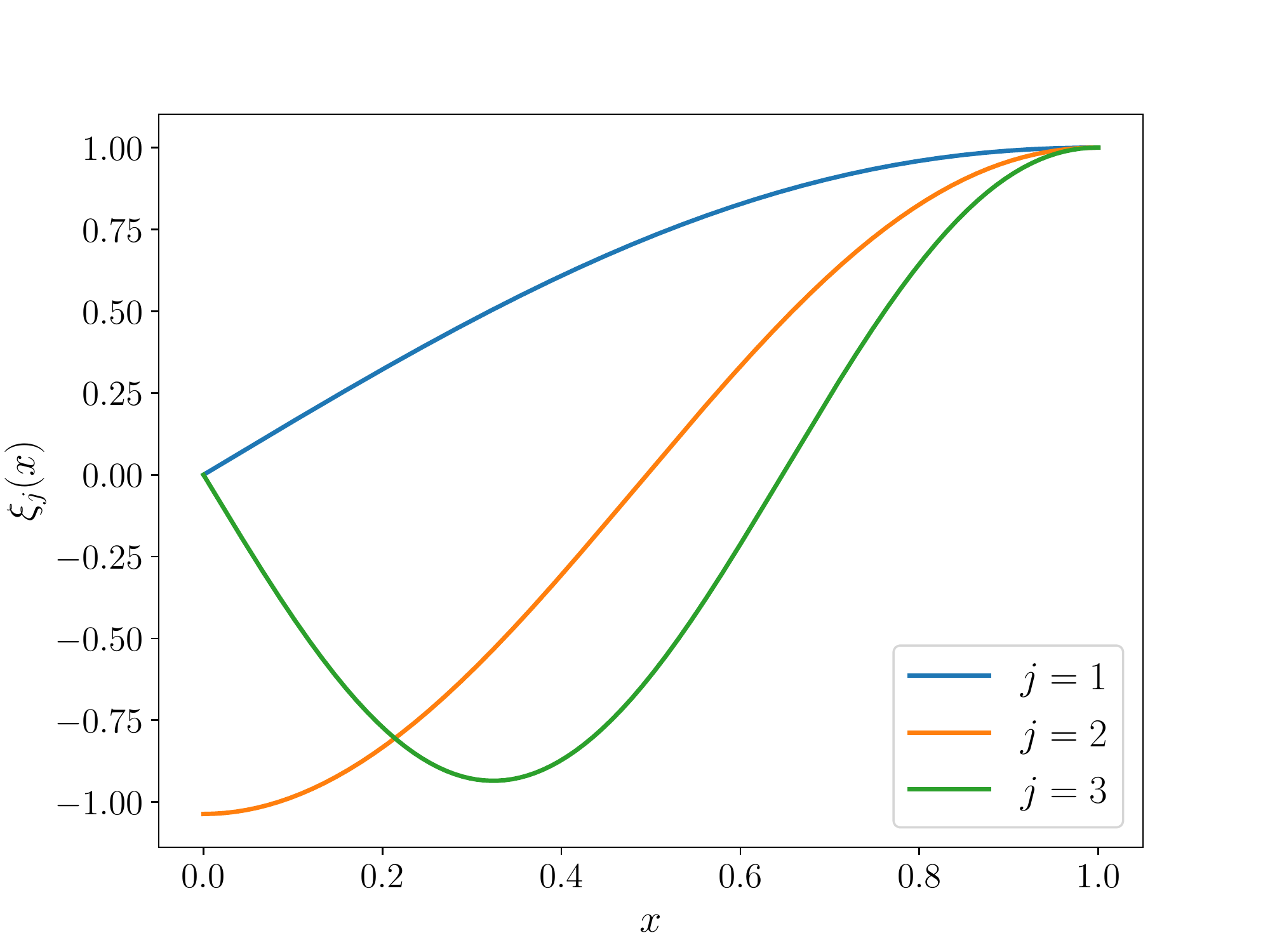}}
			\subfloat[$\bond=1000$]{\label{fig:IFP_NeumannProfiles_b}\includegraphics[width = 0.325\textwidth,trim = {0.3cm 0.3cm 2cm 1.75cm},clip=true]{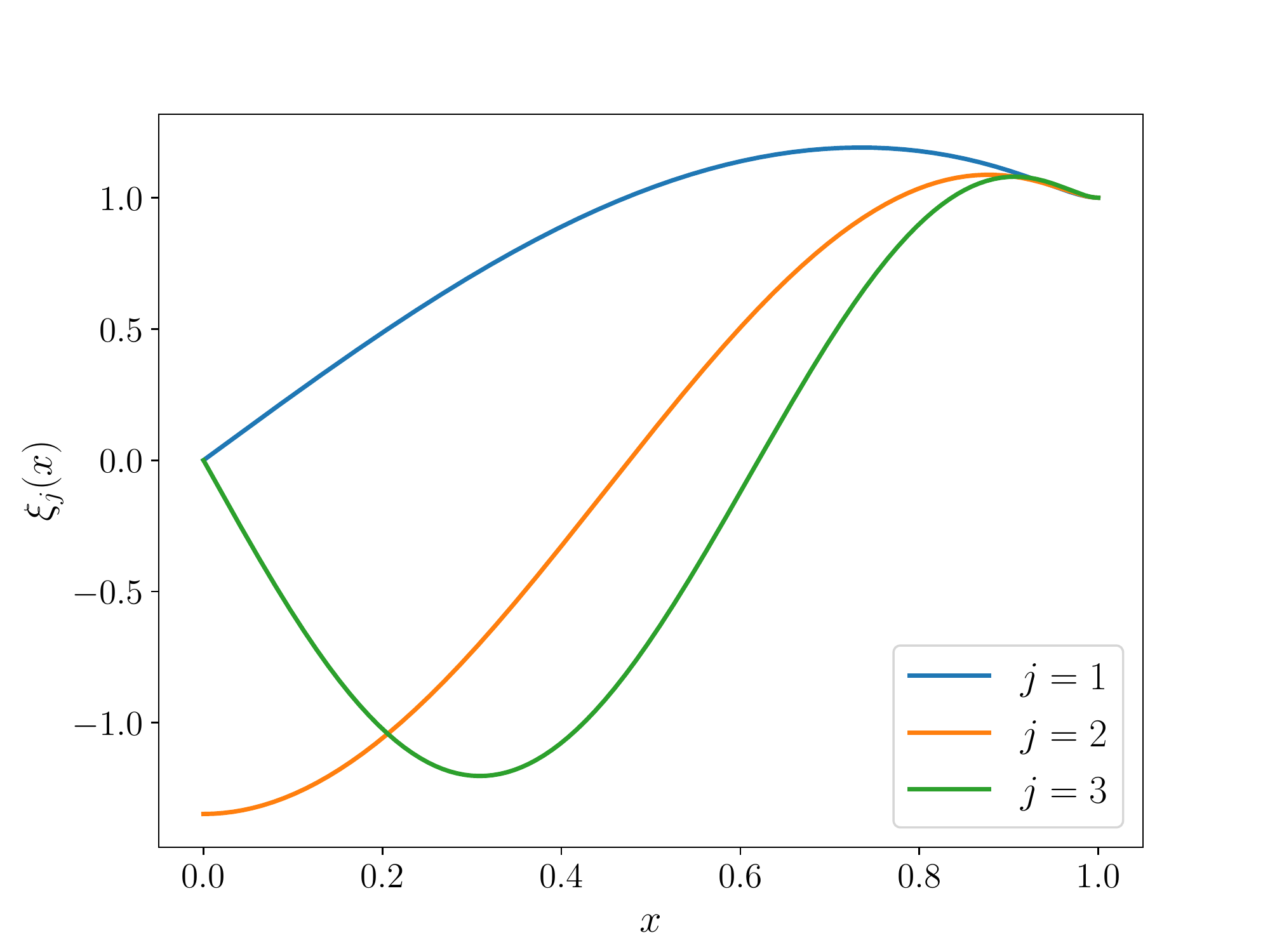}}
			\subfloat[$\bond=\infty$]{\label{fig:IFP_NeumannProfiles_c}\includegraphics[width = 0.325\textwidth,trim = {0.3cm 0.3cm 2cm 1.75cm},clip=true]{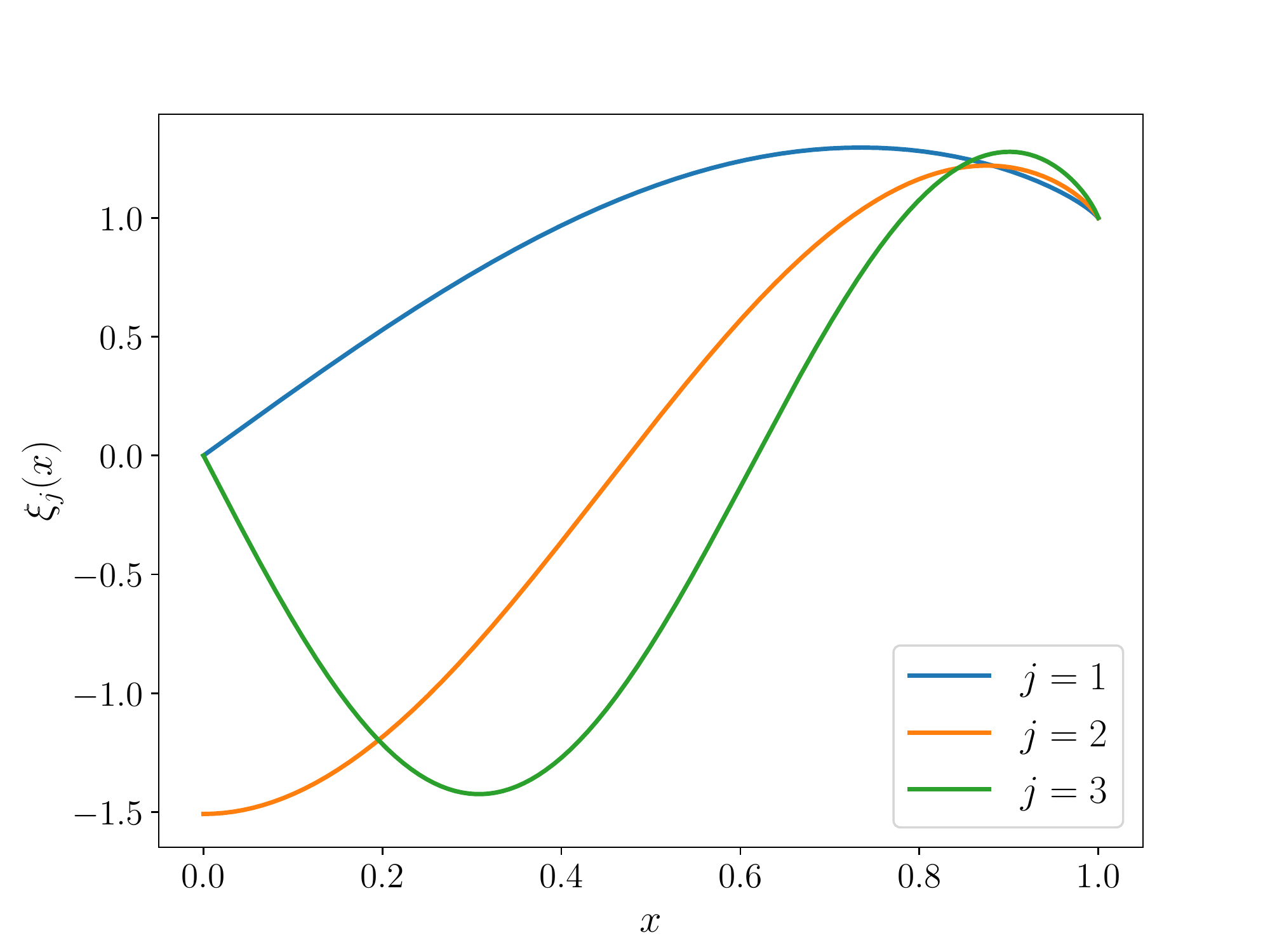}}
		\caption{First three sloshing profiles for the infinite parallel strip for  various $\bond$. Even though $\xi(x)$ is defined on $[-1,1]$ solutions are only plotted on $[0,1]$ since $\xi(x)$ is either even or odd.}
		\label{fig:IFP_NeumannProfiles}
	\end{center} 
\end{figure}  


\section{Circular hole}\label{sec:hole}
For a circular hole, the outward normal to $\F$ in the plane $z=0$ is in the radial direction. In this case, we transform the equations into cylindrical coordinates along the $z-$axis and we look for azimuthal solutions. To construct the integral operator $S$, we consider \cref{eq:Lap} in cylindrical coordinates with $f(r,\theta)=f_m(r)\cos(m\theta)$ and $\B$ the unit disk. Therefore, we make the Ansatz $\Phi(r,\theta,z)=\Phi_m(r,z)\cos(m\theta)$ for $m\geq 0$ with the conditions that $\Phi_0(0,z)$ is bounded and that $\Phi_m(0,z)=0$ for $m\geq 1$. Plugging the Ansatz into \cref{eq:Lap} and canceling $\cos(m\theta)$ yields
\begin{subequations} \label{eq:LapBes} 
	\begin{alignat}{3}  
		\label{eq:LapBes1} \Delta_m\Phi_m +\del_{zz}\Phi_m& = 0 && \ \ \tr{ in } \ \ && \D = \left\{(r, z)\in\R^2\colon r\geq 0\; z < 0\right\}, \\ 
		\label{eq:LapBes2} \del_z\Phi_m & = f_m\chi_{[0,1)} && \ \ \tr{ on }&&\del\D=\{(r,z)\in\R^2\colon r\geq 0\;z=0\}.
	\end{alignat} 
\end{subequations} 
In the above, $\Delta_m=\frac{1}{r}\del_r(r\del_r)-\frac{m^2}{r^2}$ is the Bessel differential operator. We remark that the compatibility condition is automatically satisfied for $m\geq 1$, whereas it becomes $\int_0^1 f_0rdr=0$ for $m=0$. Furthermore, only for $m=0$ can a new solution be obtained by adding a constant to any solution due to the term $\frac{m^2}{r^2}$ in $\Delta_m$. We proceed to construct solutions to \cref{eq:LapBes} using the Hankel transform method. Following \cite{Poularikas}, we define the Hankel transform pair, $F_m(k)=\mathcal{H}\{f\}(k)$, as
\begin{gather*}
    F_m(k)=\int_0^\infty rf(r)J_m(kr)\,dr\quad\text{and}\quad f(r)=\int_0^\infty kF_m(k)J_m(kr)\,dk
\end{gather*}
with $J_m$ being the Bessel functions of the first kind. Using the fact that $\mathcal{H}\{\Delta_m f\}(k)=-k^2\mathcal{H}\{f\}(k)$ \cite{Poularikas}, \cref{eq:LapBes1} becomes an integrable equation for $H_m=\mathcal{H}\{\Phi_m\}$. Looking for bounded solutions as $z\to-\infty$, the general solution has the form $H_m(k,z)=A(k)e^{kz}$. The constant of integration $A(k)$ is obtained using the Neumann boundary condition \cref{eq:LapBes2}. Rearranging reveals that $kA(k)=\mathcal{H}\{f_m\}(k)$. We note that since $f_m(r)$ is compactly supported, the inverse Hankel transform is well defined \cite{Poularikas}. Therefore, we have a solution to the Neumann problem. 
\begin{lemma}\label{lem:IDE_hole} 
	In the case of a radial hole with $f(r,\theta)=f_m(r)\cos(m\theta)$ where $f_m\in L_r^2(0, 1)$, there exists a bounded solution to \cref{eq:Lap} that can be represented as
	\begin{equation*} 
		\Phi(r,\theta,z)=\int_0^\infty\int_0^1 sf_m(s)J_m(ks)e^{kz}J_m(kr)dsdk\cos(m\theta)+c_m=Sf(r,\theta,z)+c_m
	\end{equation*}
	with $c_m=0$ if $m\geq 1$. Furthermore, we have $Sf(r,\theta,z)\to 0$ as $(r,z)\to \infty$, $\theta$ fixed.
\end{lemma}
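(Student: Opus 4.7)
The plan is to make the formal Hankel transform derivation sketched in the preceding paragraphs rigorous. Starting from the ansatz $\Phi(r,\theta,z) = \Phi_m(r,z)\cos(m\theta)$ and applying the Hankel transform in $r$, \cref{eq:LapBes1} reduces to $\partial_{zz} H_m = k^2 H_m$ for $H_m(k,z) := \mathcal{H}\{\Phi_m(\cdot,z)\}(k)$. The branch bounded as $z \to -\infty$ is $H_m(k,z) = A(k)\,e^{kz}$, and \cref{eq:LapBes2} together with the compact support of $f_m$ pins down $k\,A(k) = F_m(k)$, where $F_m(k) := \int_0^1 s\,f_m(s)\,J_m(ks)\,ds$. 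Hankel inversion then produces the stated double integral. For $m = 0$, the compatibility condition $\int_0^1 s\,f_0(s)\,ds = 0$ translates to $F_0(0) = 0$, which makes $F_0(k)/k$ regular at $k = 0$, and the freedom to add a constant $c_0$ reflects that constants solve the $m=0$ equation on the half-space; for $m \geq 1$, the regularity condition $\Phi_m(0,z) = 0$ (needed to keep $\Delta_m \Phi_m$ finite on the axis) forces $c_m = 0$.

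Next I would justify absolute convergence and termwise differentiation. Since $|J_m(x)| \leq 1$ on $[0,\infty)$, Cauchy--Schwarz gives the uniform bound $|F_m(k)| \leq \|f_m\|_{L^2_r(0,1)}$ in $k$, and the Hankel--Plancherel identity places $F_m \in L^2(k\,dk)$. The exponential decay provided by $e^{kz}$ for $z < 0$ then guarantees absolute convergence of the double integral, which in turn permits differentiation under the integral sign. Termwise one verifies \cref{eq:LapBes1} via $\Delta_m J_m(k\,\cdot) = -k^2 J_m(k\,\cdot)$ and $\partial_{zz} e^{kz} = k^2 e^{kz}$, and passing $z \to 0^-$ with the Hankel inversion theorem recovers the Neumann datum \cref{eq:LapBes2}. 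Boundedness on $\{z < 0\}$ follows immediately from $|\Phi_m(r,z)| \leq \int_0^\infty |F_m(k)|\,e^{kz}\,dk < \infty$ for each fixed $z < 0$.

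The main obstacle is the decay statement $Sf(r,\theta,z) \to 0$ as $(r,z) \to \infty$. I expect the cleanest route is to sidestep the Hankel representation here and invoke the Neumann Green's function for the lower half-space, whose restriction to the boundary plane is $(2\pi|\mathbf{x} - \mathbf{y}|)^{-1}$. This identifies $Sf$ with the convolution of the compactly supported datum $f$ against a Newton-type kernel, from which a direct estimate yields $|Sf(\mathbf{x})| \leq (2\pi)^{-1}\|f\|_{L^1(\F)}|\mathbf{x}|^{-1}$ for $|\mathbf{x}|$ large, uniformly in $\theta$. A purely Hankel-based argument is also feasible but less transparent: for $|z| \to \infty$, the factor $e^{kz}$ gives $O(|z|^{-1})$ bounds by a Laplace-method estimate, while for $r \to \infty$ with $z$ bounded one uses the Bessel asymptotic $J_m(kr) \sim \sqrt{2/(\pi kr)}\cos(kr - m\pi/2 - \pi/4)$ together with a Riemann--Lebesgue argument on $k$. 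Handling both regimes simultaneously is why the Green's-function route is preferable.
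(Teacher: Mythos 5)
Your proposal is correct, and for the substantive part of the lemma --- the decay of $Sf$ at infinity --- it takes a genuinely different route from the paper. The paper stays entirely inside the Hankel representation: it sets $G(k)=\int_0^1 s f_m(s)J_m(ks)\,ds$, bounds $|G(k)|^2$ via Cauchy--Schwarz and the identity $\int_0^1 sJ_m^2(ks)\,ds=\tfrac12\bigl[J_m^2(k)-J_{m-1}(k)J_{m+1}(k)\bigr]$, shows $G\in L^4(0,\infty)$ using Landau's bound $|J_m(k)|\le k^{-1/3}$, and then applies H\"older with exponents $4$ and $4/3$ to get $|Sf|\lesssim (-z)^{-5/12}r^{-1/3}$ (the $4/3$-power of the stated $(-z)^{-5/9}r^{-4/9}$). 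Your Green's-function route instead identifies $Sf$ with the single-layer potential $\frac{1}{2\pi}\int_{\F}f(\mathbf{y})|\mathbf{x}-\mathbf{y}|^{-1}\,dA(\mathbf{y})$ and reads off $|Sf(\mathbf{x})|\le \|f\|_{L^1(\F)}/(2\pi(|\mathbf{x}|-1))$; note $f\in L^1(\F)$ follows from $f_m\in L^2_r(0,1)$ by Cauchy--Schwarz. This buys a uniform $O(|\mathbf{x}|^{-1})$ bound with no special-function estimates --- in fact it is stronger than the paper's bound, which degenerates as $z\to 0^-$ along the boundary plane --- at the cost of one step you currently only assert: the identification of the Hankel formula with the layer potential. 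You should either prove it directly via the Lipschitz--Hankel integral $|\mathbf{x}-\mathbf{y}|^{-1}=\int_0^\infty e^{kz}J_0(k\rho)\,dk$ (for $z<0$) combined with the Bessel addition theorem for $J_0(k\rho)$ and integration against $\cos(m\phi)$, or argue by uniqueness that two bounded decaying solutions of the same half-space Neumann problem coincide. Your first two paragraphs (Hankel derivation, absolute convergence for $z<0$, differentiation under the integral) match what the paper dismisses as ``straightforward,'' and your verification details there are sound.
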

\begin{proof}
	It is straightforward to show that $\Phi$ is a solution to \cref{eq:LapBes}. 

	Let $G(k) = \int_0^1 \sqrt{s}f_m(s)\sqrt{s}J_m(ks)\, ds$. Using the Cauchy-Schwarz inequality  and \cite[10.22.5]{NIST}, we find 
	\begin{align*} 
		|G(k)|^2  
		\leq \|f_m\|_{L^2_r(0, 1)}^2\cdot\frac{1}{2}\Big[J_m^2(k) - J_{m - 1}(k)J_{m + 1}(k)\Big] . 
	\end{align*} 
	It follows that 
	\begin{align*} 
		\|G(k)\|_{L^4}^4  \le \|f_m\|_{L^2_r(0, 1)}^4\int_0^\infty J_m^4(k) + J_{m-1}^2(k)J_{m+1}^2(k)\, dk < \infty, 
	\end{align*} 
	since $J_m(k)$ is continuous on $[0, \infty)$ and $|J_m(k)|\le k^{-1/3}$ for all $k$ \cite{landau:2000}. Thus $G(k)\in L^4$. Next, using H\"{o}lder's inequality with $p = 4$ and $q = p/(p - 1) = 4/3$, we obtain 
	\begin{align*} 
		|Sf| \le \int_0^\infty |G(k)e^{kz}J_m(kr)|\, dk \le \|G(k)\|_{L^4(0,\infty)}\left\|e^{kz}J_m(kr)\right\|_{L^{4/3}(0,\infty)}. 
	\end{align*} 
	To estimate the second term on the right, we use the same bound as above on $|J_m(k)|$ and a change of variable. We have
	\begin{align*} 
		\left\|e^{kz}J_m(kr)\right\|_{L^{4/3}(0,\infty)}^{4/3} & \le \int_0^\infty \frac{e^{4kz/3}}{(kr)^{4/9}}\, dk  = \left(\frac{3}{4}\right)^{5/9}\frac{1}{(-z)^{5/9}r^{4/9}}\Gamma(5/9), 
	\end{align*} 
	where $\Gamma(u)$ is the Gamma function. Since the last expression on the right goes to zero as $r^2+z^2\to\infty$ with $r>0$ and $z<0$, the claim follows.
\end{proof}

Considering the form of the surface equation \cref{eq:IFP4} for $\xi$ in cylindrical coordinates and looking for azimuthal solutions, we make the same Ansatz $\xi(r,\theta)=\xi_m(r)\cos(m\theta)$. Therefore, for each $m\geq0$, \cref{lem:IDE_hole} gives the form of the integral operator $S$ defined in \cref{def:S} with the restriction to $z=0$ given by
\begin{equation*}
	\widehat{S}\xi(r,\theta)=\int_0^\infty\int_0^1 s\xi_m(s)J_m(ks)J_m(kr)dsdk\cos(m\theta).
\end{equation*}
Therefore, using \cref{thm:Equiv}, plugging into \cref{eq:IDEmain1}, transforming in cylindrical coordinates, using the Ansatz, noting that $M(\xi_m\cos(m\theta))=0$ for $m\geq 1$, and cancelling the cosine, we have the integro-differential equation
\begin{equation} \label{eq:IDE_3D}
	\xi_m - \frac{1}{\bond}\Delta_m\xi_m = \omega^2 \widehat{S}_r \xi_m(r)\quad\text{for}\quad m\geq 1,
\end{equation}
where $\widehat{S}_r\xi_m(r)=\int_0^\infty\int_0^1\xi_m(r)sJ_m(ks)J_m(kr)dsdk$ is the radial part of $\widehat{S}$. In the case $m=0$, we recall that from \cref{eq:IFP6}, we require $ \int_0^1\xi_0 r \, dr = 0.$ We have $\Phi=\Phi_0(r,z)$ and $\xi=\xi_0(r)$ so that application of \cref{thm:Equiv} in cylindrical coordinates yields
\begin{equation} \label{eq:IDE_3D_m0}
	\xi_0 - \frac{1}{\bond}(I-M)\left(\frac{1}{r}(r\xi_0')'\right) = \omega^2 (I-M)\widehat{S}_r \xi_0(r)\quad\text{for}\quad m=0.
\end{equation}
The boundary conditions to \cref{eq:IDE_3D}-\cref{eq:IDE_3D_m0} are $\xi_m'(1)=0$, $\xi_0(0)$ bounded and $\xi_m(0)=0$ for $m\geq 1$.

We now look for weak eigenpairs to \cref{eq:IDE_3D} and \cref{eq:IDE_3D_m0}. Define the Hilbert spaces
\begin{align*}
	V_0 &=\left\{\xi_0\in H^2_r(0,1): \xi_0'(1)=0, \xi_0(0)\text{ is bounded}, \int_0^1r\xi_0\,dr=0\right\}\\
	V_m &=\left\{\xi_m\in H^2_r(0,1): \xi_m'(1)=0, \xi_m(0)=0\right\}, \qquad m\geq 1,
\end{align*}
where the subscript $r$ indicates the weighted Sobolev space with respect to the weight function $w(r)=r$. Consider $m\geq 1$ and let $g_m$ be a test function in $V_m$. Multiplying \cref{eq:IDE_3D} by $g(r) r$, integrating from $0$ to $1$, and using integration by parts along with the boundary conditions, we get the weak formulation
\begin{equation}\label{eq:Weak_3D_m}
	\int_0^1 \left(\xi_m g r + \frac{1}{\bond}\frac{m^2}{r}\xi_m g + \frac{1}{\bond} \xi_m'g' r\right)\, dr = \omega^2 \int_0^1 (\widehat{S}_r\xi_m)(r) g(r) r dr\quad \text{ for } m\geq 1.
\end{equation}
Let $m=0$ and $g_0$ be a test function in $V_0$. Multiplying \cref{eq:IDE_3D_m0} by $g(r) r$, integrating from $0$ to $1$, using integration by parts along with the boundary conditions, and remembering that the $M$ operator produces a constant, we find the weak formulation
\begin{equation}\label{eq:Weak_3D_m0}
	\int_0^1 \left(\xi_0 g r  + \frac{1}{\bond} \xi_m'g' r\right)\, dr = \omega^2 \int_0^1 (\widehat{S}_r\xi_0)(r) g(r) r dr.
\end{equation}
It is obvious that \cref{eq:Weak_3D_m} and \cref{eq:Weak_3D_m0} can be reformulated as a single weak formulation. 
\begin{definition} \label{def:weak3d} 
	We say that $(\omega, \xi_m)\in\R\times V_m$ is a weak sloshing eigenpair of \cref{eq:IDE_3D}, or \cref{eq:IDE_3D_m0} when $m=0$, if the following holds for all $g\in V_m$: 
	\begin{equation}\label{eq:Weak_3D}
		\int_0^1 \left(\xi_m g r + \frac{1}{\bond}\frac{m^2}{r}\xi_m g + \frac{1}{\bond} \xi_m'g' r\right)\, dr = \omega^2 \int_0^1 (\widehat{S}_r\xi_m)(r) g(r) r dr\quad \text{ for }m\geq 0.
	\end{equation}
\end{definition} 


\subsection{Polynomial Approximation} \label{sec:Legendre3d} 
We look for approximate solutions to \cref{eq:Weak_3D} in a finite dimensional space. From the Hankel representation $\int_0^\infty k^2A(k)J_m(kr)dk=f_m(r)$ with $f_m(r)=\omega\xi_m(r)$ along with the fact that $J_m(r) \sim  \frac{ r^m}{2^m \Gamma(m+1)}$ as $r \to 0$  \cite[10.7.3]{NIST} we note that it is therefore necessary that $f_m(r) = O(r^m)$ as $r \to 0$. This fact motivates the use of the radial polynomials $h_j^m(r) = \mu_j^m r^m p_{j-1}^{(0,m)}(2r^2-1)$, with $p_j^{(\alpha,\beta)}(x)$ being the Jacobi polynomials and $\mu_j^m = 2  \sqrt{j + \frac{1}{2}(m-1)}$. We note that these polynomials are orthogonal on $(0,1)$ with respect to the weight function $r$, \ie $\int_0^1 h_i^m(r) h_j^m(r) r dr=\delta_{ij}.$ This fact follows simply from a change of variables, the orthogonality of Jacobi polynomials, and the fact that \cite[18.3.1]{NIST} $\int_{-1}^1 (1+x)^m \left(p_{i-1}^{(0,m)}(x)\right)^2 dx=\frac{2^{m+1}}{2i+m-1}$. Moreover, we note that $\int_0^1 h_j^0 r \, dr = 0$ for $j=2,3,\ldots$.  Now we define $P^{(n)}_m = \mathrm{span}\left\{h_0^m(x), h_1^m(x), \dots, h_n^m(x)\right\}$ same as the infinite parallel strip and analogously define
\begin{equation*}
	W^{(n)}_m = \left\{v\in P^{(n)}_m\colon v'(\pm 1) = 0 \text{ and } \int_{0}^1 vr\, dr = 0 \text{ if }  m=0\right\}.
\end{equation*}

\begin{lemma}
	Suppose $(\omega, \xi_m)$ is a weak sloshing eigenpair of the circular hole IFP, as in \cref{def:weak3d}, and let $\xi_m^{(n)}$ be the orthogonal projection of $\xi_m$ to $W^{(n)}_m$ with respect to the $L^2_r(0,1)$ inner product. When $m\geq 1$, if $\xi_m\in V_m \subset H^2_r(0,1)$ and $\frac{\xi_m}{r} \in L^2_r(0,1)$ then 
	\begin{equation*}
		\left\|\xi_m - \xi_m^{(n)}\right\|_{L^2_r(0,1)} \leq C n^{-1} \left(m\left\| \frac{\xi_m(r)}{r} \right\|_{L^2_r(0,1)} + \left\| \xi_m'(r) \right\|_{L^2_r(0,1)}\right),
	\end{equation*}
	where the constant $C$ is independent of $n$. 
\end{lemma}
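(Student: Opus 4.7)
My plan is to reduce the problem, via the change of variables $y = 2r^2 - 1$, to a standard orthogonal polynomial approximation in a Jacobi-weighted $L^2$ space on $(-1,1)$, and then invoke a known error estimate. Writing $F(y) = \xi_m(r)/r^m$ with $r = \sqrt{(y+1)/2}$ (which is well-defined as an element of a weighted $L^2$ because $\xi_m(0)=0$ for $m\geq 1$ and $\xi_m/r \in L^2_r(0,1)$ by hypothesis), and using $h_j^m(r)=\mu_j^m r^m p_{j-1}^{(0,m)}(2r^2-1)$ together with the Jacobian $dy = 4r\,dr$, a direct computation shows that
$$\int_0^1 \xi_m(r)\, h_j^m(r)\, r\,dr = \frac{\mu_j^m}{2^{m+2}} \int_{-1}^{1} F(y)\, p_{j-1}^{(0,m)}(y)\, (1+y)^m\,dy,$$
so the $L^2_r(0,1)$-projection of $\xi_m$ onto $W^{(n)}_m$ corresponds exactly to the Jacobi projection $P_n F$ of $F$ onto polynomials of degree at most $n-1$ in $L^2_{\omega^{0,m}}(-1,1)$ with weight $\omega^{0,m}(y)=(1+y)^m$. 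Moreover, the same change of variables gives $\|\xi_m - \xi_m^{(n)}\|_{L^2_r(0,1)} = 2^{-(m+2)/2}\|F - P_n F\|_{L^2_{\omega^{0,m}}}$.

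The next step is to invoke the standard Jacobi polynomial approximation estimate (see, e.g., Shen--Tang--Wang or Canuto et al.), which in our setting reads
$$\|F - P_n F\|_{L^2_{\omega^{0,m}}} \leq C\, n^{-1}\, \|F'\|_{L^2_{\omega^{1,m+1}}},$$
where $\omega^{1,m+1}(y) = (1-y)(1+y)^{m+1}$. All that remains is to translate the right-hand side back to the $r$-variable. The chain rule gives
$$F'(y) = \frac{1}{4r}\left(\frac{\xi_m'(r)}{r^m} - \frac{m\,\xi_m(r)}{r^{m+1}}\right),$$
and using $\omega^{1,m+1}(y)=2^{m+2} r^{2m+2}(1-r^2)$ together with $dy = 4r\,dr$, the various powers of $r$ collapse and one finds
$$\|F'\|_{L^2_{\omega^{1,m+1}}}^2 = 2^m \int_0^1 (1-r^2)\left|\xi_m'(r) - \frac{m\,\xi_m(r)}{r}\right|^2 r\,dr.$$
Bounding $(1-r^2)\leq 1$, applying $(a-b)^2 \leq 2(a^2 + b^2)$, and then $\sqrt{a^2+b^2}\leq a+b$ yields
$$\|F'\|_{L^2_{\omega^{1,m+1}}} \leq 2^{(m+1)/2}\left(\|\xi_m'\|_{L^2_r(0,1)} + m\,\|\xi_m/r\|_{L^2_r(0,1)}\right).$$
Combining the three inequalities and absorbing the $m$-independent powers of $2$ into the constant produces the stated bound.

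The main obstacle is verifying the Jacobi approximation estimate and correctly tracking factors of $r$, $(1-r^2)$, and powers of $2$ through the change of variables so that the weight $\omega^{1,m+1}$ in the derivative norm translates exactly into the combination $\|\xi_m'\|_{L^2_r} + m\|\xi_m/r\|_{L^2_r}$ on the right-hand side. In particular, the factor $m$ in front of $\|\xi_m/r\|_{L^2_r}$ arises naturally (and unavoidably) from differentiating the $r^m$ factor in the substitution $\xi_m(r) = r^m F(y)$. The hypothesis $\xi_m/r \in L^2_r(0,1)$ is precisely what guarantees that $F'$ lies in $L^2_{\omega^{1,m+1}}$, so that the Jacobi estimate is applicable.
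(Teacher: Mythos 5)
Your proposal is correct and follows essentially the same route as the paper: the substitution $y=2r^2-1$ with $F(y)=\xi_m(r)/r^m$ identifies the $L^2_r(0,1)$ projection with a Jacobi projection in $L^2_{(0,m)}(-1,1)$, the standard Jacobi approximation estimate (the paper cites Guo--Wang, Theorem 2.1) is applied with $k=1$, and the derivative norm is transformed back, with the factor $m\|\xi_m/r\|_{L^2_r}$ arising from differentiating $r^mF$ exactly as in the paper. Your version merely tracks the powers of $2$ more explicitly; the two arguments are the same in substance.
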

\begin{remark} 
	The result holds for $m=0$ simply if $\xi_0 \in V_0$.
\end{remark}
\begin{proof}
	We denote the space of $L^2$ integrable functions on $(-1,1)$ with respect to the Jacobi weight function $w(x)=(1-x)^\alpha(1+x)^\beta$ as $L^2_{\alpha,\beta}$. Now, we note that $\sum_{j=1}^{n-2} \alpha_j h_j^m$ being the orthogonal projection of $\xi(r)$ with respect to the $L_r^2(0,1)$ inner product is equivalent to $\sum_{j=1}^{n-2} \frac{\alpha_j\mu_j^m}{2(2^{m/2})} P_{j-1}^{(0,m)}(x)$ being the orthogonal projection of $\frac{1}{2} \left(\frac{1}{x+1}\right)^{m/2} \xi \left(\sqrt{\frac{x+1}{2}}\right)$ with respect to the $L_{0,m}^2$ inner product. Then, by a change of variables and \cite[Theorem 2.1]{guo2004jacobi} we have that 
	\begin{align*}\small
		\left\|\xi_m - \xi_m^{(n)}\right\|_{L^2_r(0,1)} &= \left\|\frac{1}{2} \left(\frac{1}{x+1}\right)^{m/2} \xi_m \left(\sqrt{\frac{x+1}{2}}\right)
		-\sum_{j=1}^{n-2} \frac{\alpha_j\mu_j^m}{2(2^{m/2})} p_{j-1}^{(0,m)}(x)\right\|_{L^2_{0,m}}, \\
		& \leq Cn^{-k} \left \| \frac{d^k}{dx^k} \left( \frac{1}{2} \left(\frac{1}{x+1}\right)^{m/2} \xi_m \left(\sqrt{\frac{x+1}{2}}\right) \right) \right\|_{L^2_{1,m+1}}.
\end{align*}
To relate back to the $L^2_r(0,1)$ inner product we consider the specific case with $k=1$, change variables, and use the triangle inequality to determine 
\begin{equation*} \small
	\left \| \frac{d}{dx} \left( \frac{1}{2} \left(\frac{1}{x+1}\right)^{m/2} \xi \left(\sqrt{\frac{x+1}{2}}\right) \right) \right\|_{L^2_{1,m+1}} \leq m \left\| \frac{\xi_m(r)}{r} \right\|_{L^2_r(0,1)} + \left\| \xi_m'(r) \right\|_{L^2_r(0,1)},
\end{equation*}
and the result follows. 
\end{proof}

To strongly enforce the boundary conditions we define  $q_j^m = h_j^m + \beta_j h_{j+1}^m$, where the constant $\beta_j$ is such that $(q_j^m)'(1) = 0$. We know \cite[18.6.1]{NIST} that $P_{n}^{(0,m)} = \frac{(1)_n}{n!} = 1$ and that $\frac{d}{dr} P_{j-1}^{(0,m)}(1) = \frac{(j-1)(j+m)}{2}$, which comes from evaluating the Jacobi polynomial differential equation at 1, yielding $\left(h_j^m\right)'(1) = \mu_j^m m + 2 \mu_j^m (j-1)(j+m)$. Therefore, we have that $\beta_j = -\frac{\mu_j^m \left(m + 2(j-1)(j+m) \right)}{\mu_{j+1}^m \left(m + 2j (j+m+1)\right)}$. Note that if Dirichlet boundary conditions were prescribed, one would simply define $\beta_j^m = -\frac{\mu_j^m}{\mu_{j+1}^m}$.

Therefore, the discrete weak formulation of \cref{eq:Weak_3D} is to find $(\omega, \xi_m^{(n)}) \in \R \times W^{(n)}$ such that for all $g\in W_m^{(n)}$: 
\begin{equation} \label{eq:IFPhole_discrete} \small
	\begin{split}
		\int_0^1 &\left(\xi_m^{(n)}(r) g(r) + \frac{1}{\bond} \frac{m^2}{r}\xi_m^{(n)}(r)g(r)+\frac{1}{\bond}\left(\xi_m^{(n)}(r)\right)'g'(r)r\right)\, dr\\
		&= \omega^2\int_0^1\int_0^\infty\int_0^1\xi_m^{(n)}(r)sJ_m(ks)J_m(kr) g(r) r ds dk dr .
	\end{split}
\end{equation} 
\begin{theorem}\label{thm:discrete_hole}
	If $(\omega,\xi_m^{(n)})\in\R\times W^{(n)}$ solves the radial discrete IFP \cref{eq:IFPhole_discrete}, then $(\omega,\xi^{(n)})$ solves the generalized eigenvalue problem for $i=1,2,\ldots,n$
	\begin{equation} \label{eq:IDEhole_numerics} \small
		\sum_{j=1}^{n} a_j^m \left(M_{ij}^m + \frac{1}{\bond}K_{ij}^m\right) = \omega^2 \sum_{j=1}^{n} a_j^m L_{ij}^m ,
	\end{equation}
	where $M_{ij}^m =  \int_0^1 q_j^m q_i^mr\, dr$ is the mass matrix,  $K_{ij}^m=\int_0^1 \left( \frac{m^2}{r}q_j^mq_i^m + (q_j^m)'(q_i^m)' r\right)\, dr $ is the stiffness matrix, and $L_{ij}^m=\int_0^1 \int_0^\infty \int_0^1 q_j^m(s) J_m(ks) s\, ds J_m(kr)\, dk q_i^m(r) r dr.$ For the $m=0$ case this simply needs to be edited to $\ds \xi_m = \sum_{j=2}^{n} a_j^m q_j^m$ and it only holds for $i=2,\ldots,n$. Moreover, 
	\begin{align*} \small
		M_{ij}^m &= \left(1 + \beta_i^2\right)\delta_{ij} + \beta_i \delta_{i+1,j} + \beta_{i-1}\delta_{i-1,j}, \\
		K_{ij}^m &= -\mu_j^m \mu_{j+1}^m \left(\mu_j^{m+1}\right)^2 \beta_j^m \delta_{ij}, \\
		L_{ij}^m &= \tilde{L}_{ij}^m + \beta_i \tilde{L}_{i+1,j}^m + \beta_j \tilde{L}_{i,j+1}^m + \beta_i\beta_j\tilde{L}_{i+1,j+1}^m,\\
		\tilde{L}_{ij}^m &= \frac{\mu_j^m \mu_i^m}{4 \pi \left((j+i+m-1)^2-\frac{1}{4}\right)\left(\frac{1}{4} - (j-1)^2\right)}.
	\end{align*}
\end{theorem}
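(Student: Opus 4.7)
The proof will proceed by direct computation, closely paralleling that of \cref{thm:discrete_strip}. First I would expand $\xi_m^{(n)}=\sum_{j=1}^{n}a_j^m q_j^m$ (or with the sum starting at $j=2$ when $m=0$, to respect the zero-mean constraint built into $W^{(n)}_0$) and choose the test function $g=q_i^m$ in \cref{eq:IFPhole_discrete}. Because the left-hand side is bilinear and the right-hand side defines an integral operator acting on $\xi_m^{(n)}$, this substitution immediately produces the generalized eigenvalue problem \cref{eq:IDEhole_numerics}, with the three matrices being precisely the integrals defining $M^m_{ij}$, $K^m_{ij}$, and $L^m_{ij}$ in the statement. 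The rest of the proof consists of evaluating each of these three matrices in closed form.

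For the mass matrix $M^m_{ij}$, I would substitute $q_j^m=h_j^m+\beta_j^m h_{j+1}^m$ into $\int_0^1 q_i^m q_j^m\, r\,dr$, expand the product into four terms, and invoke the orthonormality $\int_0^1 h_p^m h_q^m\, r\,dr=\delta_{pq}$ (which follows from a change of variables $x=2r^2-1$ and the Jacobi-polynomial normalization \cite[18.3.1]{NIST} already used to define $\mu_j^m$). The four Kronecker deltas collapse to the claimed tridiagonal-plus-diagonal form $(1+\beta_i^2)\delta_{ij}+\beta_i\delta_{i+1,j}+\beta_{i-1}\delta_{i-1,j}$.

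For the stiffness matrix $K^m_{ij}$, I would combine the two contributions $\int_0^1\frac{m^2}{r}q_i^m q_j^m\,dr$ and $\int_0^1(q_i^m)'(q_j^m)'\,r\,dr$. Integration by parts on the second integral produces $-\int_0^1\bigl((rq_j^{m\,\prime})'/r\bigr)q_i^m\,r\,dr$ plus boundary terms; the boundary term at $r=1$ vanishes because $(q_j^m)'(1)=0$ by construction, and the term at $r=0$ vanishes because $q_j^m(r)=O(r^m)$ as $r\to 0$ (recall $h_j^m(r)=\mu_j^m r^m p_{j-1}^{(0,m)}(2r^2-1)$). The combined operator is exactly $-\Delta_m$, and the key identity I would verify is that the radial polynomials $h_j^m$ are (up to a rescaling) Zernike-type eigenfunctions yielding $-\Delta_m h_j^m=\mu_j^m\mu_{j+1}^m(\mu_j^{m+1})^2\,(\text{constant})\, h_{\text{higher}}^m+\cdots$, so that after testing against $q_i^m$ only the diagonal survives. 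This is where most of the algebra lives, and I would rely on the Jacobi differential equation $(1-x^2)y''+\bigl(\beta-\alpha-(\alpha+\beta+2)x\bigr)y'+n(n+\alpha+\beta+1)y=0$ to drive the computation.

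The matrix $L^m_{ij}$ is the main obstacle, as it encodes the nonlocal Hankel kernel. After expanding $q_i^m$ and $q_j^m$ in the $h_\ell^m$ basis, the claim reduces to proving the closed form for $\tilde L^m_{ij}=\int_0^1\!\int_0^\infty\!\int_0^1 h_j^m(s)J_m(ks)s\,ds\,J_m(kr)\,dk\,h_i^m(r)r\,dr$. The plan is to apply the Zernike Hankel-transform identity
\[
\int_0^1 h_\ell^m(s)\,J_m(ks)\,s\,ds \;=\; \mu_\ell^m\,(-1)^{\ell-1}\,\frac{J_{m+2\ell-1}(k)}{k},
\]
which is a standard consequence of the relation between Jacobi polynomials and Zernike functions. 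Substituting this into $\tilde L^m_{ij}$ reduces the radial triple integral to the single Weber--Schafheitlin integral $\int_0^\infty k^{-2}J_{m+2i-1}(k)J_{m+2j-1}(k)\,dk$, whose closed form is a classical ratio of Gamma functions. Simplifying with the Euler reflection formula $\Gamma(z)\Gamma(1-z)=\pi/\sin(\pi z)$ (applied to collapse pairs like $\Gamma(3/2+d)\Gamma(3/2-d)=\pi(1/4-d^2)/(-1)^d$) yields the stated rational form of $\tilde L^m_{ij}$. Assembling the four expanded terms with the appropriate $\beta_i,\beta_j$ coefficients then gives $L^m_{ij}$ as claimed, completing the proof.
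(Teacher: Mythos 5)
Your proposal is correct in its overall structure and matches the paper's skeleton exactly for the setup (expand $\xi_m^{(n)}=\sum_j a_j^m q_j^m$, test against $g=q_i^m$), for the mass matrix (orthonormality of the $h_j^m$), and for $L^m_{ij}$ (the Zernike--Hankel identity followed by Weber--Schafheitlin and the reflection formula is precisely the computation the paper outsources to Miles and to \cref{sec:AppIOat1}). Where you genuinely diverge is the stiffness matrix. The paper's \cref{sec:App3dMatrices} never integrates by parts: it splits $\tilde K^m_{ij}$ into three integrals $A^m_{ij}$, $B^m_{ij}$, $C^m_{ij}$ using the Jacobi derivative relation \cite[18.9.15]{NIST}, evaluates each in closed form via connection-sum formulas \cite[18.18.14]{NIST}, obtains the explicit symmetric expression $\tilde K^m_{ij}=\mu_i^m\mu_j^m\bigl(m+2(\min(i,j)-1)(\min(i,j)+m)\bigr)$, and only then shows the off-diagonal entries of $K^m_{ij}$ cancel by the choice of $\beta_j$. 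Your route --- integrate by parts (boundary terms vanish by $(q_j^m)'(1)=0$ and the $O(r^m)$ behavior at the origin, as you note), observe that $-\Delta_m q_j^m$ lies in $\mathrm{span}\{h_1^m,\dots,h_j^m\}$, and kill the off-diagonal by orthogonality and symmetry --- is the same argument the paper uses for the strip in \cref{thm:discrete_strip}, and it establishes diagonality more cheaply. The trade-off is that you still must extract the exact coefficient of $h_j^m$ in $-\Delta_m h_{j+1}^m$ to get the stated value of $K^m_{jj}$, which requires a derivative/connection identity and not merely the Jacobi differential equation; and one sentence of your sketch is garbled on this point: $-\Delta_m h_j^m$ expands in \emph{lower}-index $h_\ell^m$ (it is a polynomial of degree $m+2j-4$ of the correct form, the $r^{m-2}$ singular terms cancelling), not in ``$h^m_{\text{higher}}$'' as written. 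With that fixed, both approaches land on the same closed forms, and your reduction of $\tilde L^m_{ij}$ in fact produces the symmetric factor $\tfrac14-(i-j)^2$, consistent with the intended formula.
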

\begin{proof}
Again, we write $ \xi_m^{(n)} = \sum_{j=1}^{n} a_j^m q_j^m$ for $i=1,2,\ldots,n$ and we choose $g=q_i^m$ in \cref{eq:Weak_3D} to obtain the integral form of the matrices. The lengthy calculations to obtain the coefficients are given in \cref{sec:App3dMatrices}.
\end{proof}


\subsection{Numerical Results}\label{sec:numerics_hole}
Similar to the infinite parallel strip we are left to solve a generalized eigenvalue problem of the form \cref{eq:IFP_GEP}. In the circular hole problem, the mass matrix $\mathbf{M}$ is tridiagonal, the stiffness matrix $\mathbf{K}$ is diagonal, and the matrix $\mathbf{L}$ is dense. Again, it follows from \cref{thm:Equiv} that the eigenvalues $\lambda_j$ of \cref{eq:IFP_GEP} approximate the first $n$ of the natural sloshing frequencies of IFP squared. 

The first three eigenvalues, for $m=0,1,10$, of \cref{eq:IFP_GEP} for the circular hole for $\bond = 1, 10, 50, \infty$ with $n = 200$ are shown in \cref{tab:Neu3DEigvals}. To validate our solution to the circular hole IFP for $\bond = \infty$ we compared the eigenvalues to those found in Miles \cite{miles:1972} for $m=0,1,2,3$ and note that for all eigenvalues reported our results agree to the accuracy given by Miles (6 digits). Miles \cite{miles:1972} compares their results to that of Henrici et al. \cite{henrici:1970} and finds their results to be as accurate or more accurate. Miles \cite{miles:1972} reports that results are given for $n=16$. The current work presents results with $n=64$ to recover the same, or better, accuracy. This is necessary due to the enforcement of boundary conditions in the current work, which is not present in the $\bond = \infty$ problem and for this reason not considered in \cite{miles:1972}. In the circular hole problem, with $m=1$ we have that $\ds \frac{\lambda_1(\bond = 1)}{\lambda_1(\bond = \infty)} = 4.510$. That is, when the force due to surface tension is comparable to the gravitational force the fundamental sloshing frequency is increased to more than 400\% of the value when surface tension is negligible. 

\begin{table}[tbhp]
	\centering
    	\begin{tabular}{|c|c|c|c|c||c|} \hline
                        				    &           & $\bond=1$ & $\bond=10$ & $\bond=50$ & $\bond=\infty$ \\ \hline
		\multirow{3}{*}{$m=0$}  & $j=1$ &   64.9935 &  10.2253   &   5.3528   & 4.1213         \\ \cline{2-6} 
                        				    & $j=2$ &  369.5505 &  43.5842   &  14.6085   & 7.3421         \\ \cline{2-6} 
                        				     & $j=3$ & 1100.4019 & 119.5281   &  32.3396   &  10.5171       \\ \hline
		\multirow{3}{*}{$m=1$}  & $j=1$ &   12.4245 &   3.7758   &   2.9854   & 2.7548         \\ \cline{2-6} 
                        				     & $j=2$ &  172.4077 &  22.5719   &   9.2589   & 5.8921         \\ \cline{2-6}
                        				     & $j=3$ &  665.6328 &  74.7182   &  22.1968   & 9.0329         \\ \hline
		\multirow{3}{*}{$m=10$} & $j=1$ & 1971.9593 & 209.8054   &  53.1458   & 13.5734        \\ \cline{2-6} 
                        				      & $j=2$ & 4740.2111 & 489.8354   & 112.0301   & 17.4838        \\ \cline{2-6}
                        				      & $j=3$ & 8611.8754 & 880.1861   & 192.9290   & 21.0661        \\ \hline
    	\end{tabular}
	\caption{Eigenvalues $j=1,2,3$ for the azimuthal modes $m=1,2,3$ and for $\bond=\{1,5,10,20,50,\infty\}.$}
	\label{tab:Neu3DEigvals}
\end{table}

\cref{fig:IFP3Dm0_convergence_a} illustrates the convergence plots of the sloshing frequencies $j=1,2$ with $m=1$ and their corresponding sloshing profile in \cref{fig:IFP3Dm0_convergence_b} for $\bond =\{0.1,1,10\}$. The true solution $(\lambda_j^*,\xi_j^*)$ is the highly resolved ($n=2000$) solution from our method. Convergence for the sloshing profiles is measured in the $L^2$-norm. In both the sloshing frequencies and sloshing profiles we observe a high rate of convergence. This rate of convergence is unaffected by the choice of $m$ so only the results for $m=1$ are shown.

\begin{figure}[tbhp]  
	\begin{center}
			\subfloat[$\lambda_1,\lambda_2$ convergence]{\label{fig:IFP3Dm0_convergence_a}\includegraphics[width = 0.49\textwidth]{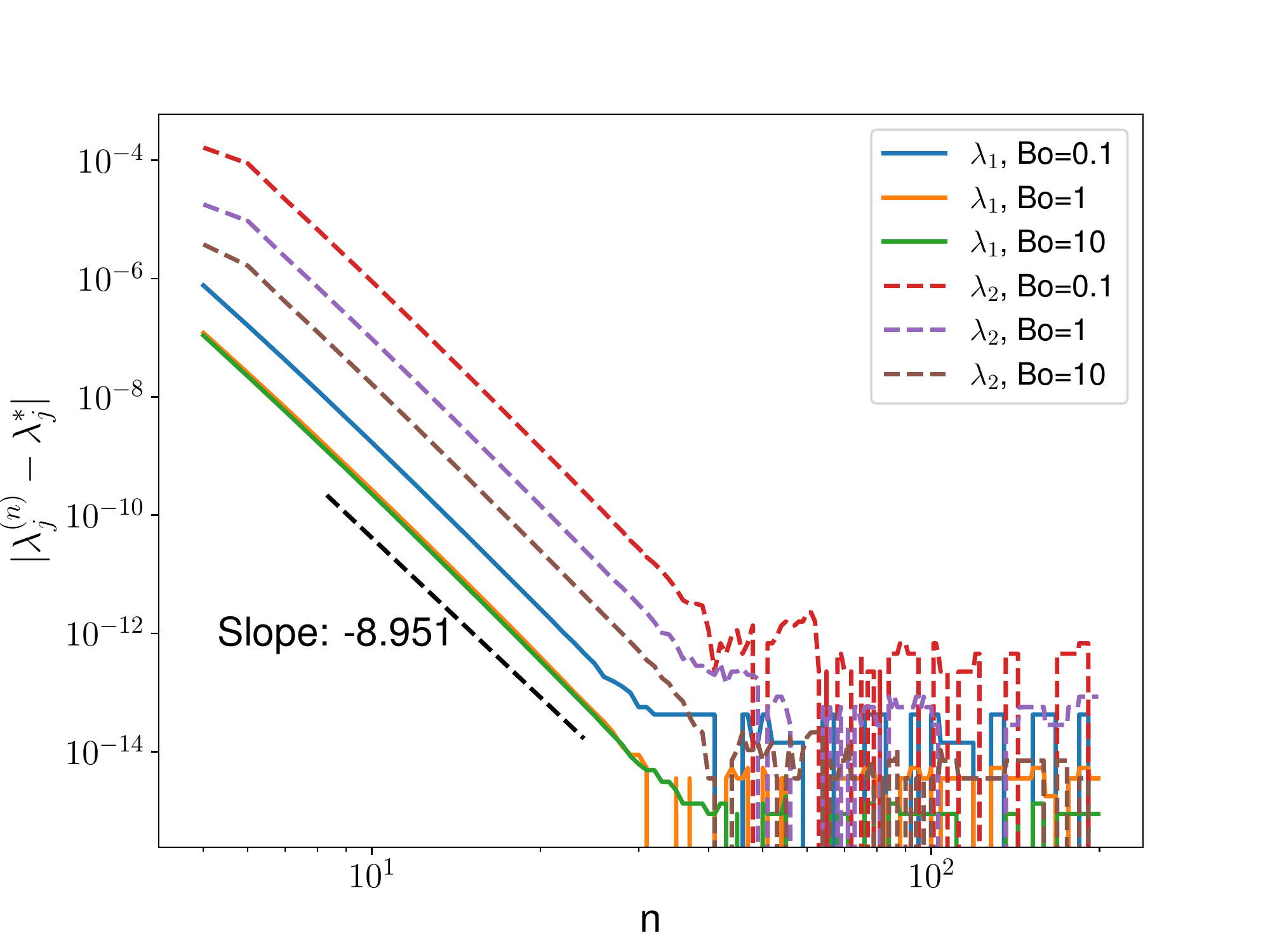}}
			\subfloat[$\xi_1,\xi_2$ convergence]{\label{fig:IFP3Dm0_convergence_b}\includegraphics[width = 0.49\textwidth]{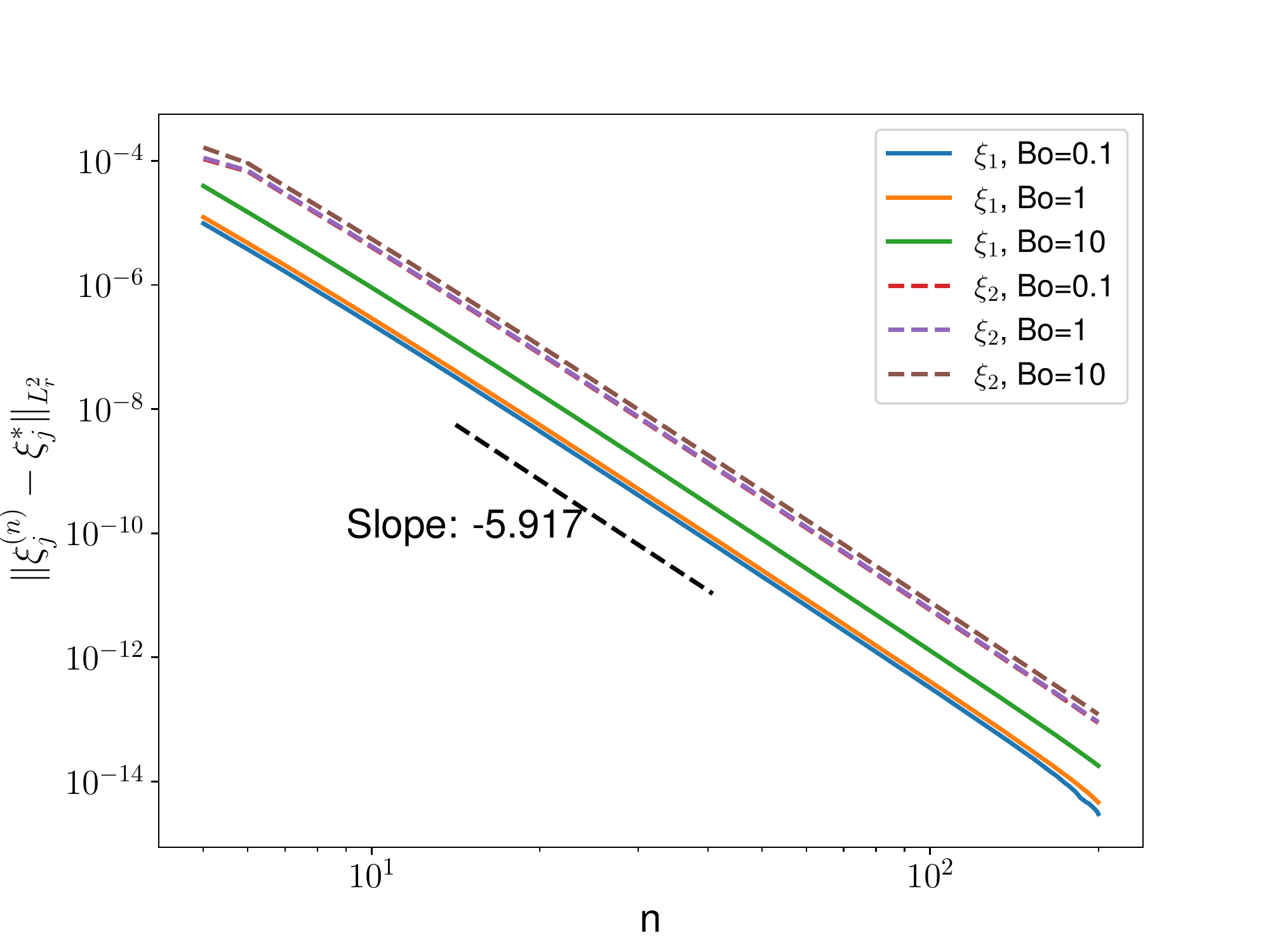}}
		\caption{Log-log convergence plots of the first (solid) and second (dashed) eigenvalues and their sloshing profile for $\bond = 0.1, 1, 10$ with $m=1$. The true solution is obtained with $n=200$.} 
		\label{fig:IFP3Dm0_convergence} 
	\end{center}
\end{figure} 

The first three sloshing profiles, for $m=0,1,2$, for $\bond=\{1,1000,\infty\}$ are shown in \cref{fig:IFP3d_NeumannProfiles}. Again, the sloshing profiles for the circular hole appear to be unchanged for $\bond <10$. We observe the same phenomenon as observed in the infinite parallel strip case that for small $\bond$ the high spot is located on $\del\F$ whereas for large $\bond$ the high spot has moved to interior of $\F$. This can also be seen for $\bond\in[10^0,10^3]$ in \cref{fig:IFP_HighSpot_c} and it will be discussed further in \cref{sec:HS}.

\begin{figure}[tbhp] 
	\begin{center}
			\subfloat[$\bond=1$]{\label{fig:IFP3d_NeumannProfiles_a}\includegraphics[width = 0.32\textwidth,trim = {0.7cm 0.3cm 2cm 1.75cm},clip=true]{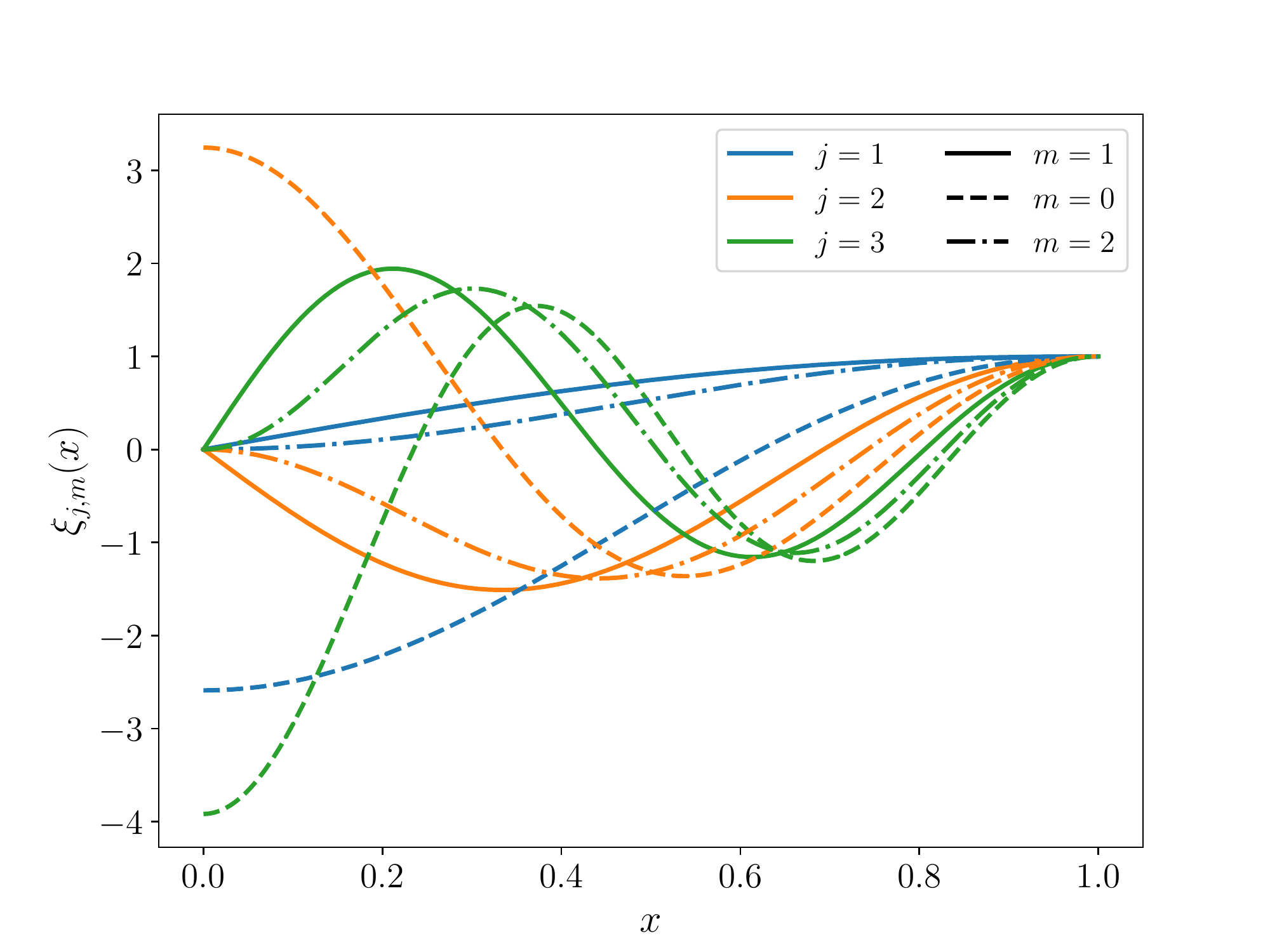}}
			\subfloat[$\bond=1000$]{\label{fig:IFP3d_NeumannProfiles_b}\includegraphics[width = 0.32\textwidth,trim = {0.7cm 0.3cm 2cm 1.75cm},clip=true]{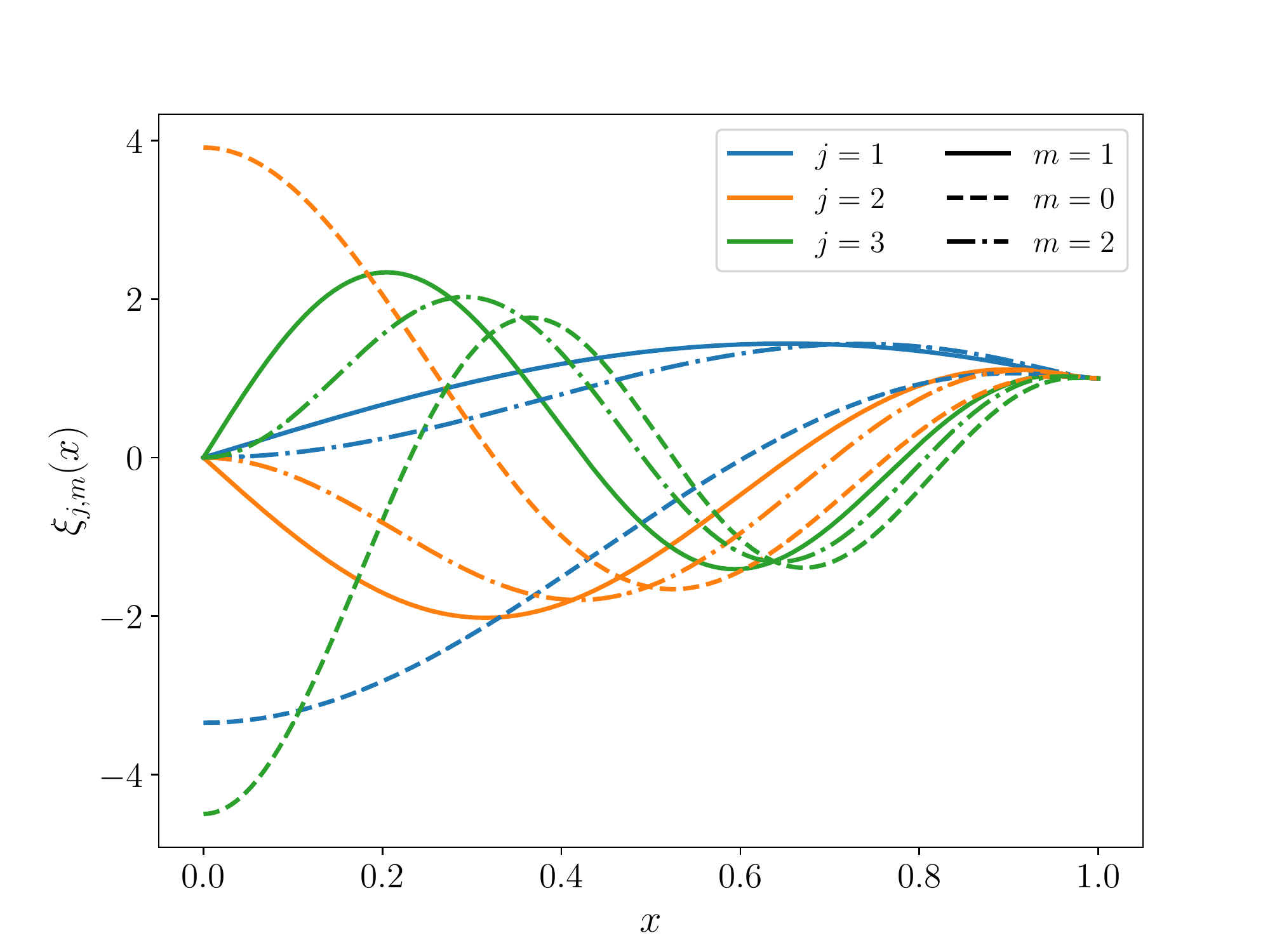}}
			\subfloat[$\bond=\infty$]{\label{fig:IFP3d_NeumannProfiles_c}\includegraphics[width = 0.32\textwidth,trim = {0.7cm 0.3cm 2cm 1.75cm},clip=true]{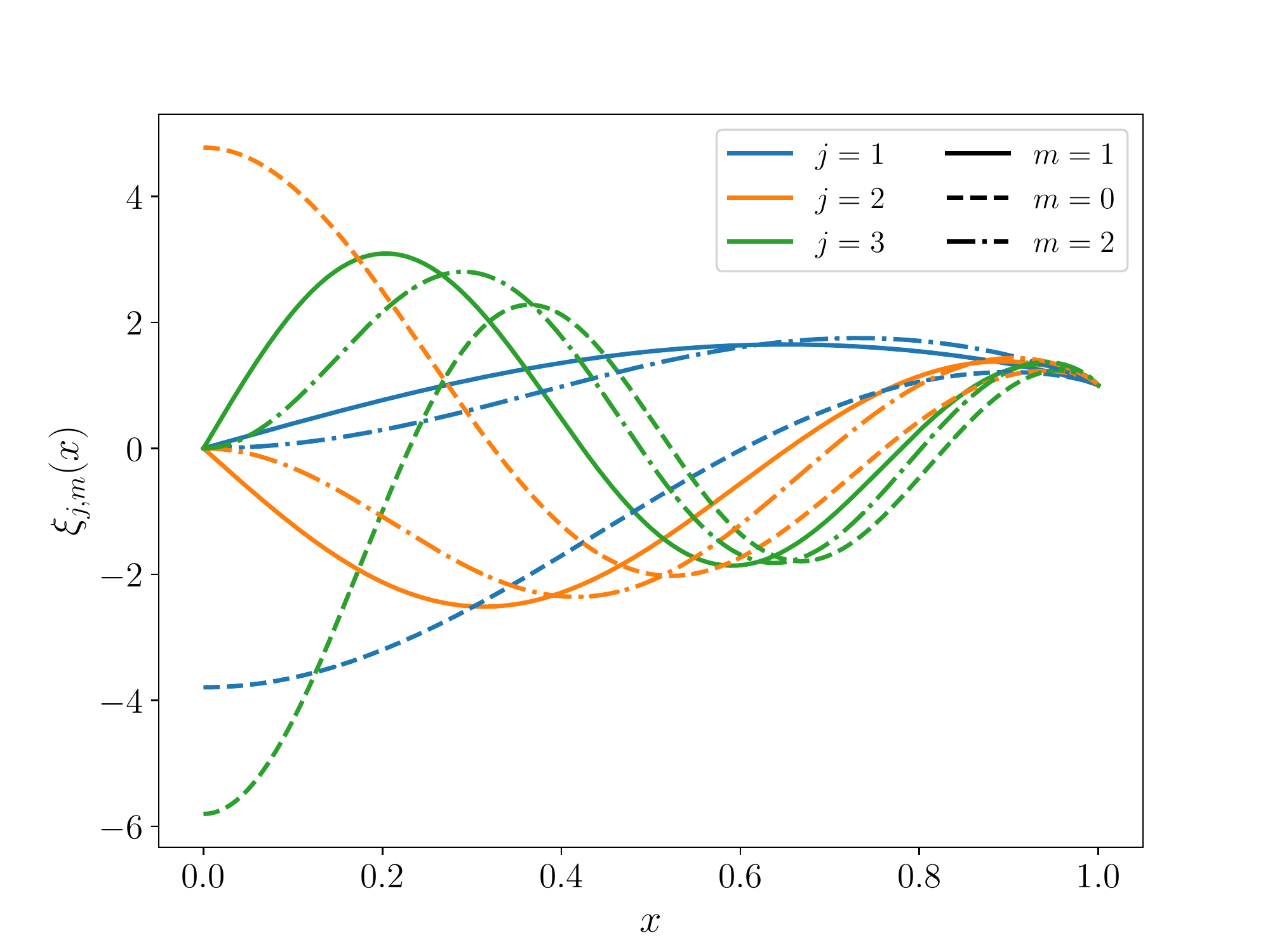}}
		\caption{The first three sloshing profiles $j=\{1,2,3\}$ (blue, orange, green) for the IFP with a radial hole for $m=0$ dashed, $m=1$ solid and $m=3$ dot dashed and for varying $\bond$.} 
		\label{fig:IFP3d_NeumannProfiles}
	\end{center} 
\end{figure}  


\section{High spot justification for a radial hole} \label{sec:HS}
In this section, we offer a justification for the main result that with sufficiently strong surface tension the high spot of the sloshing profile can be moved to the boundary of the free surface. We recall that previous results in the absence of surface tension, \ie  $\bond\to\infty$, show that the high spot for a radial aperture is always inside the domain \cite{kulczycki:2009}. While we focus on the fundamental sloshing height, which corresponds to $m=1$, $j=1$, we demonstrate that the method described below to find the critical $\bond$ such that the high spot is on the boundary applies for any $m\geq 1$ when such a $\bond$ exists. \cref{fig:IFP_HighSpot_c} plots $\bond$ in terms of the location of the high spot for a radial hole, $m=1$, and $j=1$. It is obtained using Newton's method for $\xi_1'(r)$ and it shows that for $\bond>\bond^*=4.63462$, the high spot is located in the interior and asymptotes to the known location $x=0.650312$ for $\bond\to\infty$ and that for $\bond<\bond^*$ the high spot is on the boundary. 

To justify the numerical observation that for sufficiently small $\bond$ the high spot is on the boundary, we remark that $r=1$ is always a critical point of $\xi_m(r)$ in $[0,1]$ as a consequence of the Neumann boundary condition. Furthermore, for $m\neq 0$, if $r=1$ is a local minimum, then we must have by continuity of $\xi_m(r)$ that there exists at least one local maximum in $(0,1)$ since $\xi_m(0)=0$ and $\xi_m$ is scaled such that $\xi_m(1)=1$. Thus we start by finding the concavity of $\xi_m$ at $r=1$ and by determining $\bond=\bond^*$ for which $\xi_m''(1)=0$, \ie where $\xi''_m(1)$ changes sign. To do so, we evaluate \cref{eq:IDE_3D} for $m\geq 1$ at $r=1$ to get $\xi_m''(1) = m^2 + \bond\left(1 - \omega^2\widehat{S}_r \xi_m(1)\right)$. Here, we have used the fact that $\xi_m'(1) = 0$ and that $\xi_m(1)=1$. Plugging in $\xi_m''(1) = 0$ and noting that $(\omega,\xi_m)$ depends on $\bond$ we define 
\begin{equation*} \small 
	T(\bond) = \frac{m^2}{\omega^2\widehat{S}_r \xi_m(1)-1},
\end{equation*}
and seek fixed points $\bond^* = T(\bond^*)$. The map $T(\bond)$ with $m=1$ is plotted in \cref{fig:T_Bo_a} in blue and the fixed points are the intersection with the black dashed line $y=\bond$. Intuitively, to find the fixed points of $T(\bond)$ one would consider the iterative scheme $\bond_{n+1} = T(\bond_n)$. However, it is obvious from the slope at the fixed point in \cref{fig:T_Bo_a} that these iterations will diverge. Additionally, we observe that $T(\bond)$ has a discontinuity where the denominator is zero. It follows naturally to instead consider the reciprocal map $1/T(1/x)$, where $x=1/\bond$, shown in \cref{fig:T_Bo_b} for $m=1$. This eliminates the observed discontuinity, but $x_{n+1} = 1/T(1/x_n)$ would still diverge away from the fixed point. 
\begin{figure}[tbhp]
	\begin{center}
			\subfloat[$T(\bond)$]{\label{fig:T_Bo_a}\includegraphics[width = 0.32\textwidth,trim={1cm 0.2cm 2cm 1.6cm},clip]{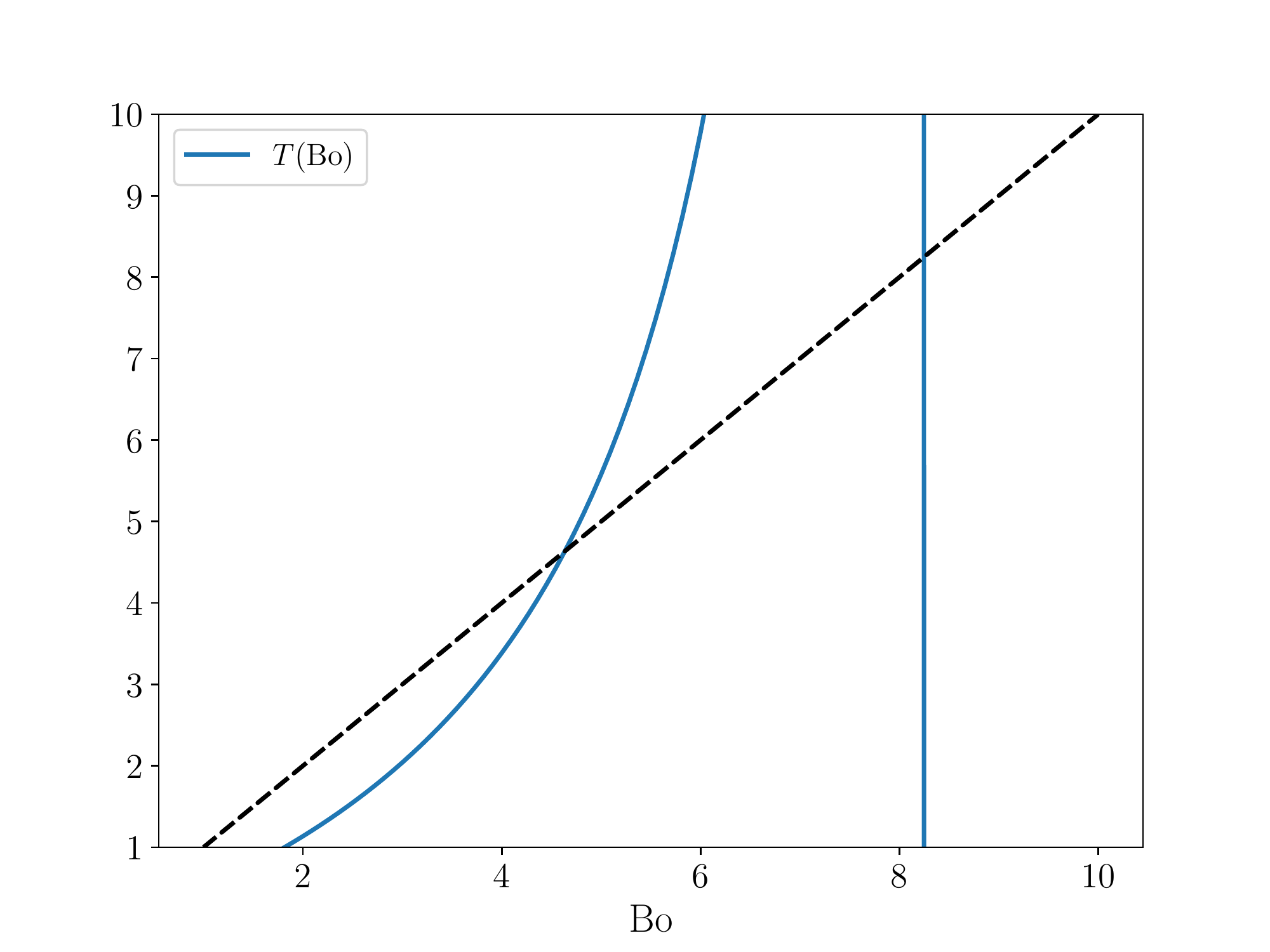}}
			\subfloat[$\frac{1}{T\left(\frac{1}{x}\right)}$, $x=\frac{1}{\bond}$]{\label{fig:T_Bo_b}\includegraphics[width = 0.32\textwidth,trim={1cm 0.2cm 2cm 1.6cm},clip]{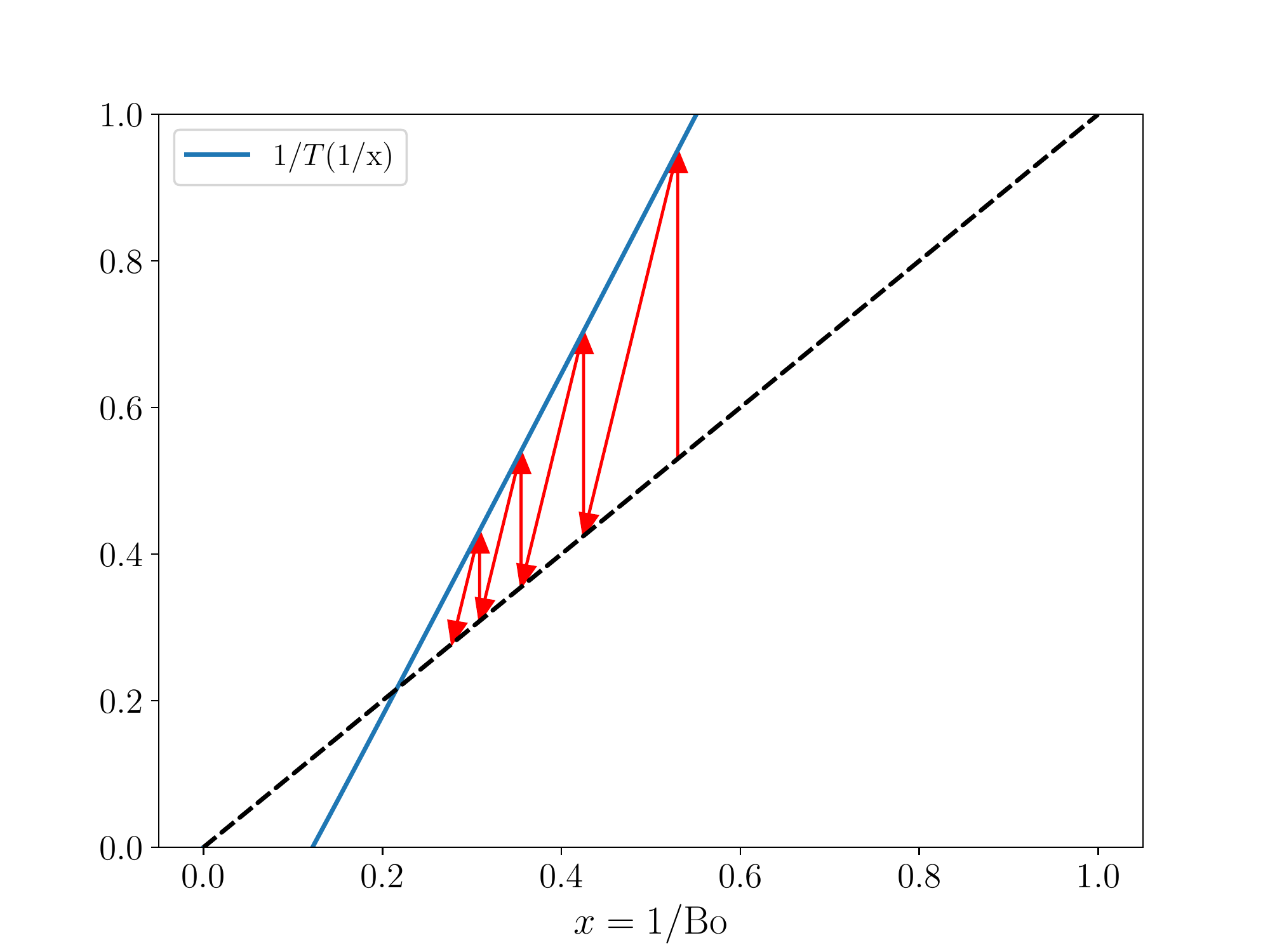}}
			\subfloat[$\widetilde{T}(x)$, $x=\frac{1}{\bond}$]{\label{fig:T_Bo_c}\includegraphics[width = 0.32\textwidth,trim={1cm 0.2cm 2cm 1.6cm},clip]{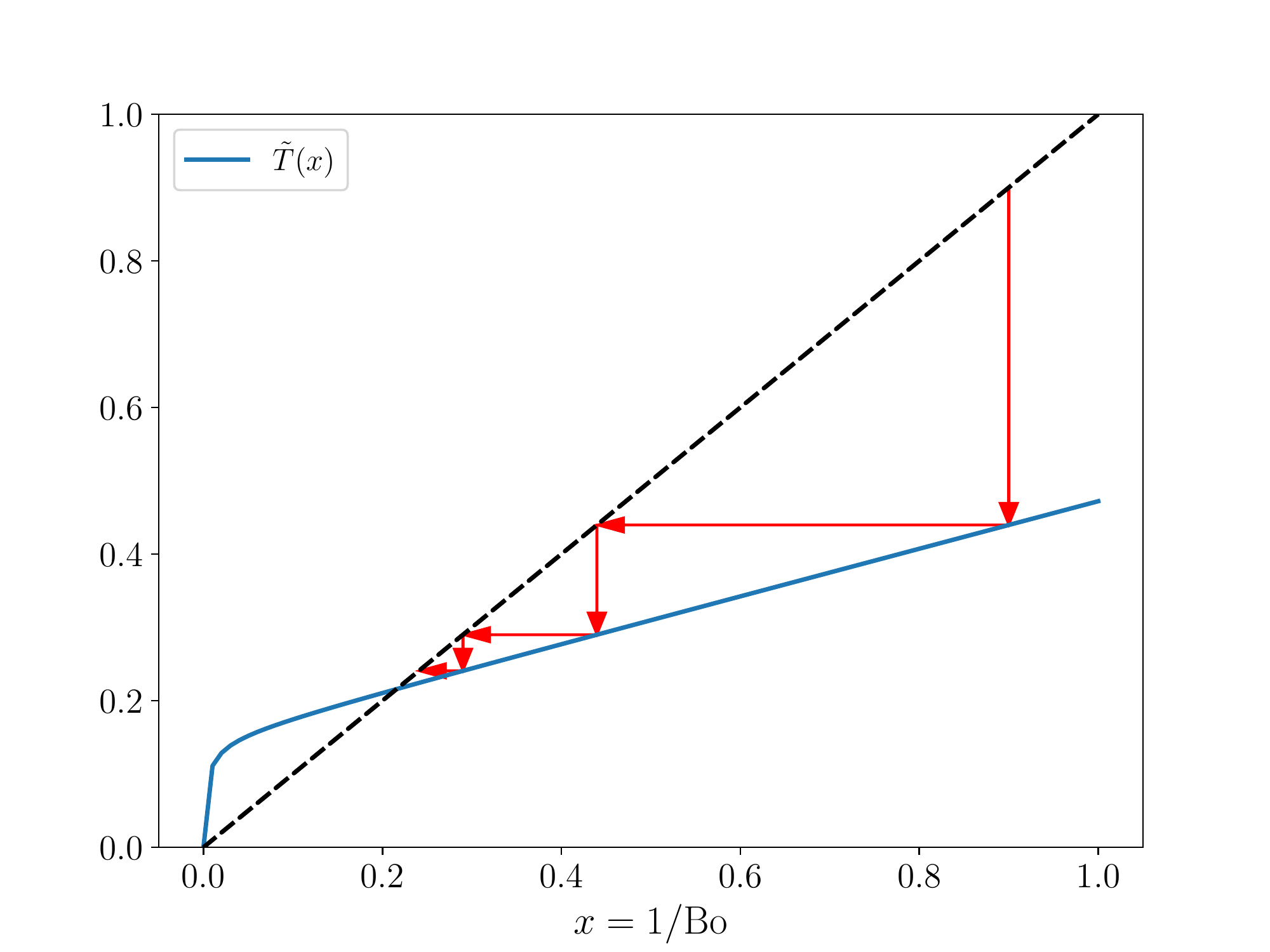}}
		\caption{(a) $T(\bond)$ plotted for $\bond \in [1,10]$ and $m=1$ where fixed points are at the intersection with the black dashed $y=\bond$. We note the discontinuity of $T(\bond)$. (b) Reciprocal map $1/T(x)$ with $x=1/\bond$ together with the modified fixed point iteration with slope $\alpha=3$ (c) New map $\widetilde{T}(x)$ with $x=1/\bond$ (blue) plotted for $x\in (0,1]$, or $\bond \in [1,\infty)$, $m=1$, and $\alpha=3$ where we note a fixed point for finite $\bond$ at the intersection with the dashed line $y=x$ together with the fixed point iteration.}
		\label{fig:T_Bo}
		\end{center}
\end{figure}
Therefore, instead of iterating along a horizontal line from the map $1/T(1/x_n)$ to the line $x_{n+1} = x_n$ we iterate along a line passing through $(x_n,1/T(1/x_n))$ with slope $\alpha$ to the line $x_{n+1}=x_n$, as illustrated by the red arrows in \cref{fig:T_Bo_b}. We note that $\alpha = 0$ simply corresponds to a horizontal line which returns the classical cobweb scheme. Additionally, we see that if $\alpha = 1$ then the line we are iterating along is parallel to the line $x_{n+1}=x_n$ and the scheme is not sensible. With this idea, we define a new map 
\begin{equation} \label{eqn:Ttilde} \small
	\widetilde{T}(x) = \frac{1}{1 - \alpha}\left(\frac{1}{T\left(\frac{1}{x}\right)} - \alpha x\right), 
\end{equation}
shown in \cref{fig:T_Bo_c} with $m=1$, $\alpha=3$, and now consider the iterative scheme $x_{n+1}=\widetilde{T}(x_n)$, as illustrated by the red arrows in \cref{fig:T_Bo_c}. The following lemma follows by simple computations. 

\begin{lemma}\label{lem:FixedPoints}
	For  $\alpha>1$, $\bond^*$ is a fixed point of $T$ if and only if $x^*=\frac{1}{\bond^*}$ is a fixed point of $\widetilde{T}$. 
\end{lemma}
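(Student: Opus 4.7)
The plan is to show the claim by direct algebraic manipulation of the defining identity \cref{eqn:Ttilde}, treating the statement as an equivalence of two scalar equations after the change of variable $x=1/\bond$. The hypothesis $\alpha>1$ will enter only through the harmless requirement $1-\alpha\neq 0$, which is needed so that $\widetilde T$ is well defined; everything else is bookkeeping.

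First I would handle the forward direction. Assuming $\bond^*=T(\bond^*)$ with $\bond^*\neq 0$, I take reciprocals to obtain $x^*=1/\bond^*=1/T(\bond^*)=1/T(1/x^*)$. Substituting this identity into the definition \cref{eqn:Ttilde} gives
\begin{equation*}
\widetilde T(x^*)=\frac{1}{1-\alpha}\left(\frac{1}{T(1/x^*)}-\alpha x^*\right)=\frac{1}{1-\alpha}\bigl(x^*-\alpha x^*\bigr)=x^*,
\end{equation*}
so $x^*$ is a fixed point of $\widetilde T$.

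For the converse, I suppose $x^*=\widetilde T(x^*)$. Multiplying through by $1-\alpha$ (legal since $\alpha>1$) yields $(1-\alpha)x^*=1/T(1/x^*)-\alpha x^*$, which rearranges to $1/T(1/x^*)=x^*$. Taking reciprocals and using $1/x^*=\bond^*$ gives $T(\bond^*)=\bond^*$, as required. The only step requiring any care is ensuring that the reciprocals on both sides are defined, i.e.\ that $\bond^*$ lies outside the discontinuity set of $T$; this is automatic from the very hypothesis that $\bond^*$ is a fixed point, since then $T(\bond^*)=\bond^*$ is finite and nonzero, so the algebra goes through. I expect no substantive obstacle here — the lemma is essentially a restatement of the construction of $\widetilde T$, and the whole proof reduces to verifying that the affine combination in \cref{eqn:Ttilde} was designed to preserve fixed points under the reciprocal change of variable.
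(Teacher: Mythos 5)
Your proof is correct and is precisely the "simple computations" the paper invokes without writing out: both directions are the straightforward algebraic verification that the affine combination defining $\widetilde{T}$ preserves fixed points under $x=1/\bond$, with $\alpha>1$ used only to guarantee $1-\alpha\neq 0$. No gap; your attention to the reciprocals being well defined is a minor but welcome addition.
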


\cref{alg:1} outlines the procedure to determine the value for $\bond^*$, the fixed point of $T(\bond)$, using the map $\widetilde{T}(x)$ and the equivalence of \cref{lem:FixedPoints}. To evaluate $\widehat{S}_r\xi_m(1)$, we use the polynomial approximation derived in \cref{sec:Legendre3d} to write 
\begin{equation}\label{eq:Sr1}\small
	\widehat{S}_r \xi_m(1) = \sum_{j=1}^n a_j^m \left((\widehat{S}_r h_j^m)(1) + \beta_j^m (\widehat{S}_r h_{j+1}^m)(1) \right),
\end{equation}
where $\widehat{S}_r h_j^m(1) = \frac{\mu_j^m}{2\pi\left(j+m-\frac{1}{2}\right)\left(j-\frac{1}{2}\right)}$, see \cref{sec:AppIOat1} for details. 
\begin{algorithm}[tbhp]
	\caption{Procedure to determine $\bond^*$}\label{alg:1}
	\begin{algorithmic}
		\REQUIRE{$\bond>1$, $\alpha>1$, $m\in \mathbb{N}$}
		\STATE $x_0 \gets 1/\bond$
		\WHILE{$|x_{n+1}-x_n|>$threshold}
		\STATE Solve \cref{eq:IFP_GEP}  for $(\omega,\xi_m)$ with $\bond = 1/x_n$
		\STATE Evaluate $\omega^2(\widehat{S}_r\xi_m)(1)$ using \cref{eq:Sr1}
		\STATE $x_{n} \gets \widetilde{T}(x_n)$
		\ENDWHILE
		\RETURN{$1/x_n$}
	\end{algorithmic}
\end{algorithm}

In \cref{tab:IterativeSteps}, we report on the number of steps for \cref{alg:1} to converge with a threshold of $10^{-14}$ for $\alpha=\{2,3,5,10\}$ and $m=\{1,2,3,4,5\}$. As the number of steps were similar for all the $n$ tested, we only give the results for $n=20$. We note that the classical fixed point iteration corresponds to $\alpha=0$ and the edited fixed point iteration should converge for any $\alpha>1$, although it is expected that more iterations are necessary for convergence for larger $\alpha$.

\begin{table}[tbhp]
	\centering
	\begin{tabular}{|c|c|c|c|c|c|} \hline
				     & $m=1$ & $m=2$ & $m=3$ & $m=4$ & $m=5$ \\ \hline
		$\alpha = 2$  & 29  & 40  & 92  & 195  & 540  \\ \hline
		$\alpha = 3$  & 30  & 94  & 194 & 395  & 1066 \\ \hline
		$\alpha = 5$  & 76  & 198 & 392 & 783  & 2088 \\ \hline
		$\alpha = 10$ & 184 & 450 & 873 & 1726 & 4568 \\ \hline
    	\end{tabular}
	\caption{Number of steps for \cref{alg:1} to converge (with $\text{threshold} = 10^{-14}$) to a fixed point for $\alpha=2,3,5,10$, $m=1,2,3,4,5$ and $n=20$. All other values of $n$ tested produced similar results.}
	\label{tab:IterativeSteps}
\end{table}

Values of $\bond^*$ against the dimension of the solution space, $n=\{5,20,80\}$, and the radial mode, $m=\{1,2,3,4,5\}$ are given in \cref{tab:MagicBo}. We note the agreement between the column $m=1$ and the value given in \cref{fig:IFP_HighSpot_c}. 
\begin{table}[tbhp]
	\centering
	\begin{tabular}{|c|c|c|c|c|c|} \hline
			    & $m=1$ & $m=2$ & $m=3$ & $m=4$ & $m=5$ \\ \hline
		$n=5$  & 4.6342188 & 7.1574495 & 7.8108948 & 6.6284730 & 3.6110536 \\ \hline
		$n=20$ & 4.6346165 & 7.1588900 & 7.8137285 & 6.6328709 & 3.6171135 \\ \hline
		$n=80$ & 4.6346167 & 7.1588910 & 7.8137311 & 6.6328760 & 3.6171218 \\ \hline
	\end{tabular}
	\caption{Values of $\bond^*$ for $n\in \{5,20,80\}$ and radial mode $m\in \{1,2,3,4,5\}$.}
	\label{tab:MagicBo}
\end{table}

\begin{theorem}\label{thm:HSInt}
	Consider the fundamental sloshing frequency $j=1$. If $m\in\{1,\ldots,5\}$, then $\bond^*$ exists. Furthermore,
	\begin{enumerate}
		\item[(i)] If $\bond\in(\bond^*,\infty)$, then the high spot is located in the interior.
		\item[(ii)] If $\bond\in[1,\bond^*)$, then the high spot is on the boundary.
	\end{enumerate}
\end{theorem}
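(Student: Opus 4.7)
The plan is to reduce the high spot location to the sign of $\xi_m''(1)$ and characterize the threshold $\bond^*$ as a fixed point of the map $T$ already isolated in the exposition. I would first evaluate the integro-differential equation \cref{eq:IDE_3D} pointwise at $r=1$. Using the Neumann condition $\xi_m'(1)=0$ and the normalization $\xi_m(1)=1$, the Bessel Laplacian collapses to $\Delta_m\xi_m(1)=\xi_m''(1)-m^2$, which gives the pointwise identity
\[
\xi_m''(1)=m^2+\bond\bigl(1-\omega^2\widehat{S}_r\xi_m(1)\bigr).
\]
Setting $\xi_m''(1)=0$ is precisely the fixed point equation $\bond=T(\bond)$, so characterizing $\bond^*$ amounts to exhibiting a fixed point of $T$ for each admissible $m$.

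Next, for the existence of $\bond^*$ when $m\in\{1,\ldots,5\}$, I would rely on \cref{alg:1} applied to the rescaled map $\widetilde T$ of \cref{eqn:Ttilde}. By \cref{lem:FixedPoints}, fixed points of $\widetilde T$ correspond to fixed points of $T$ via $x^*=1/\bond^*$, and the reformulation in $x=1/\bond$ removes the discontinuity of $T$ visible in \cref{fig:T_Bo_a}. Choosing $\alpha>1$ with $\alpha$ larger than the slope of the reciprocal map at the fixed point turns the scheme into a stable cobweb iteration, illustrated in \cref{fig:T_Bo_c}. \cref{tab:IterativeSteps} confirms convergence of the iteration for every $(m,\alpha)$ tested, and \cref{tab:MagicBo} shows that the resulting $\bond^*$ is already resolved at small truncation $n$, so this step is the pragmatic, numerically verified source of existence in what is explicitly presented as a justification rather than a fully analytic proof.

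For (i) and (ii), I would use continuity of $\bond\mapsto\xi_m''(1)$ together with the sign change through $\bond^*$. When $\bond>\bond^*$, the identity above gives $\xi_m''(1)>0$, so $r=1$ is a strict local minimum; combined with $\xi_m(0)=0$ and $\xi_m(1)=1$, continuity forces a point $r_0\in(0,1)$ with $\xi_m(r_0)>1$, placing the high spot in the interior. When $\bond<\bond^*$, one has $\xi_m''(1)<0$, so $r=1$ is a strict local maximum; I would then combine this with the monotone radial profile observed in \cref{fig:IFP3d_NeumannProfiles} for small $\bond$ to conclude that this local maximum is global.

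The main obstacle lies in the second half of (ii): local concavity at the endpoint alone does not rule out an interior bump exceeding $\xi_m(1)=1$ for $\bond<\bond^*$. A rigorous treatment would require an a priori shape constraint, for instance monotonicity of the fundamental $\xi_m$ in the surface-tension-dominated regime via an argument on the weak formulation \cref{eq:Weak_3D} or a positivity-preserving principle for the spectral problem; here this step is supplied by the computed profiles. A secondary subtlety is that $T(\bond)$ is only defined where $\omega^2\widehat{S}_r\xi_m(1)>1$, so one must also verify (again numerically for the modes $m\in\{1,\ldots,5\}$) that this inequality holds throughout the relevant range of $\bond$ so that the fixed point iteration is well posed and corresponds to a sign change rather than a pole.
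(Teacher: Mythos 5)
Your proposal follows essentially the same route as the paper: the pointwise identity $\xi_m''(1)=m^2+\bond\bigl(1-\omega^2\widehat{S}_r\xi_m(1)\bigr)$ at $r=1$, the fixed-point characterization of $\bond^*$ via $T$ and the rescaled map $\widetilde{T}$ with existence for $m\in\{1,\ldots,5\}$ supplied numerically by \cref{alg:1}, and the sign of $\xi_m''(1)$ to decide whether $r=1$ is a local minimum or maximum. The gap you flag in part (ii) — that endpoint concavity alone does not rule out an interior bump exceeding $\xi_m(1)$ — is present in the paper's proof as well, where it is closed only by numerical inspection of the zeros of $\xi_m'$ (\cref{fig:zeros}), so your appeal to the computed profiles is the same kind of evidence the authors use.
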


\begin{proof}
	For $m\geq 6$, the equation $T(\bond)=\bond$ has no solution, so the high spot is inside the domain. We note that since $\xi_m(0)=0$ and $\xi_m(r)\geq 0$, $r=0$ is always a global minimum.

	(i) Recall that $x=\frac{1}{\bond}$. From \cref{fig:T_Bo_c}, we observe that $\widetilde{T} (x) > x$ when $x<x^*$ so that $\widetilde{T}\left(\frac{1}{\bond} \right) > \frac{1}{\bond}$ whenever $\bond > \bond^*$. From the definition of $\widetilde{T}$ we have $\frac{1/T(\bond)-\alpha/\bond}{1-\alpha} > \frac{1}{\bond}$ and since $\alpha >1$ this simplifies to $\frac{1}{T(\bond)}<\frac{1}{\bond}$. Therefore, $\frac{\omega^2 \widehat{S}_r \xi_m(1) - 1}{m^2}<\frac{1}{\bond}$ so that $\xi_m''(1) = m^2 + \bond(1-\omega^2 \left(\widehat{S}_r \xi_m)(1) \right)>0.$ Thus $\xi_m''(1)>0$ and along with the Neumann boundary conditions this implies that $\xi_m(1)$ is a local minimum and the high spot is in the interior.

	(ii) Using similar calculations as above, we establish that $\xi_m''(1)<0$. Since $\xi_m'(1)=0$, we have that a local maximum occurs at $r=1$. To conclude that $r=1$ is a global maximum of $\xi_m(r)$, we see by inspection that there are no other critical points in $(0,1)$. Thus, the high spot is on the boundary.  
\end{proof}
\begin{remark}
	When $\bond \to \infty$, \cref{thm:HSInt} implies that the high spot is in the interior for the problem without surface tension. This is consistent with previous results on the 2D IFP \cite[Proposition 2.7]{kulczycki:2009} as well the 3D axially symmetric IFP \cite[Theorem 3.2]{kulczycki:2009} where the authors neglect surface tension.
\end{remark}
For further illustration, \cref{fig:zeros} shows the location of the first two nonzero roots of $\xi_1(r)$ for all $\bond$, one them always being $r=1$. The derivative is expressed analytically as $\xi_1'(r)=\sum_{j=1}^na_j^1\frac{d}{dr}q_j^1(r)$ where the radial polynomial derivative uses derivatives of Jacobi polynomials and a simple root solver is used to find the zeros. \cref{fig:zeros} shows the expected horizontal asymptote $y=0.650312$, the expected horizontal line $r=1$ and the expected critical $\bond^*$ where both curves intersect. For $\bond<\bond^*$, since the blue curve is always above the orange line, see \cref{fig:zeros_b} for a blow up plot, we conclude that there are no critical points in $(0,1)$.
\begin{figure}[t]
	\label{fig:zeros}
	\centering
		\subfloat[{$\bond\in[0.01,1000]$}]{\label{fig:zeros_a}\includegraphics[width=0.49\textwidth,trim = {0cm 0cm 1cm 1.8cm},clip]{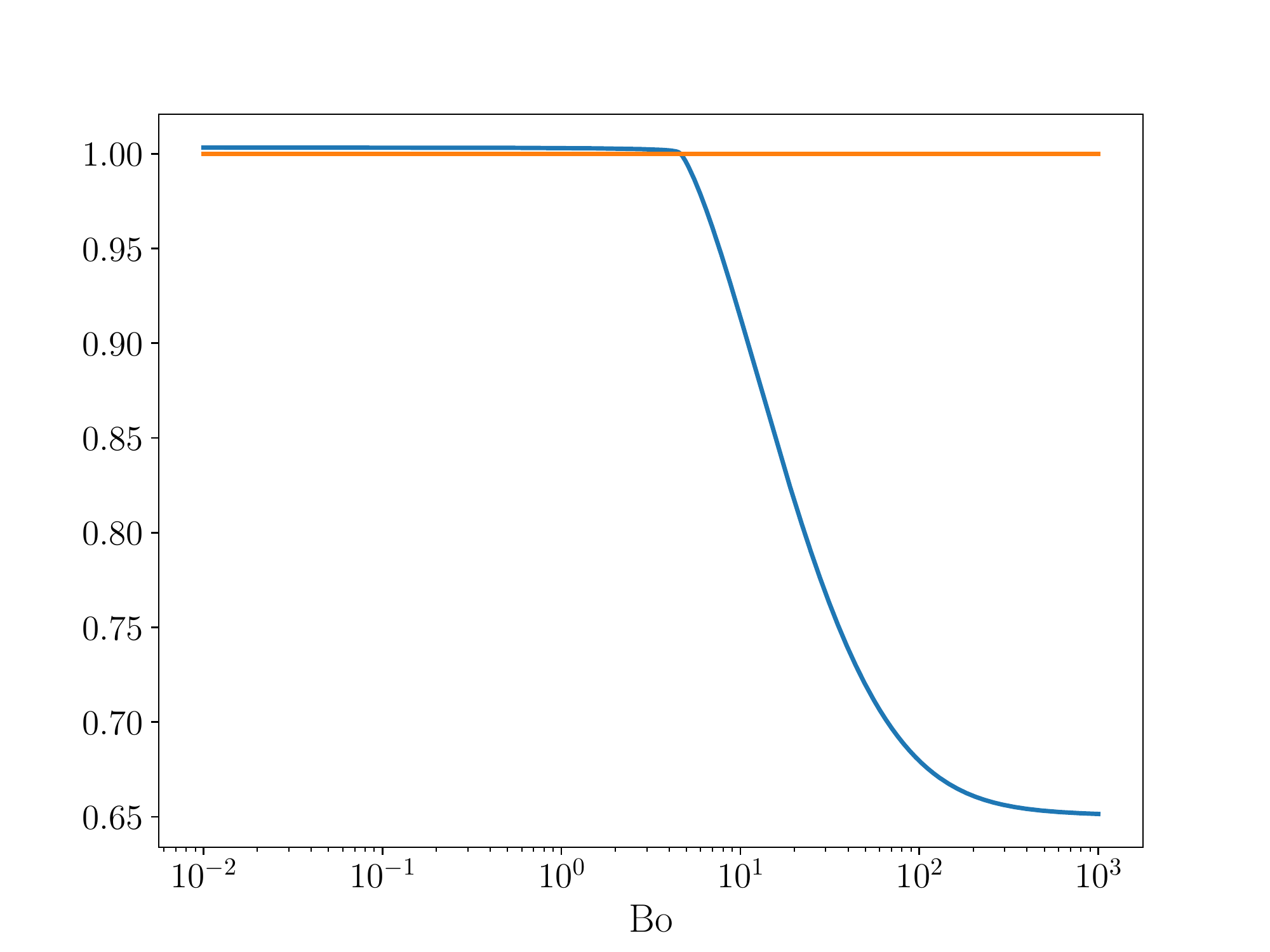}}
		\subfloat[{Blow up for $\bond\in[0.1,5]$}]{\label{fig:zeros_b}\includegraphics[width=0.49\textwidth,trim = {0cm 0cm 1cm 1.8cm},clip]{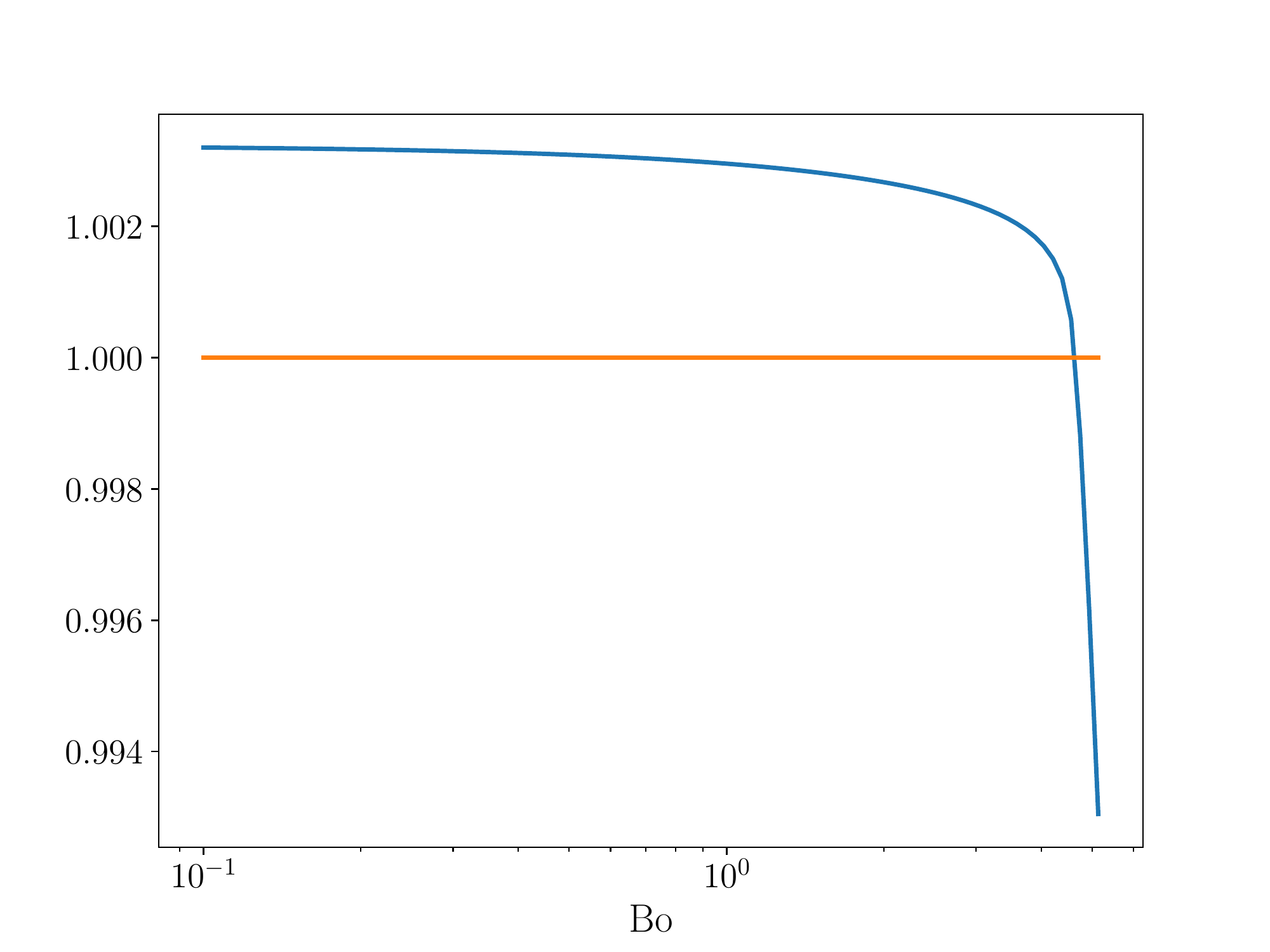}}
	\caption{Location of the first two zeros of $\xi'_m(r)$ as a function of $\bond$. Because of the Neumann boundary condition, $r=1$ is always a zero and corresponds to the orange line. The blue curve is obtained by numerically finding the first zero of $\xi_m'(r)$.}
\end{figure}


\section{Discussion} \label{sec:IFP_disc} 
This paper provides the first study on the ice fishing problem (IFP) including surface tension effects. Although the geometries considered are infinite parallel strips and circular holes, for any $\bond$, the results give an upper bound for the fundamental sloshing frequency for any container with the same free surface \cite{tan:2017}. Building on previous studies without surface tension \cite{davis:1970, henrici:1970}, we derived an integro-differential equation and proved the spectrum is equivalent to the spectrum of \cref{eq:IFP}. The novelty of this approach is that, for the considered geometries, the problem is transformed from an unbounded domain to a bounded one-dimensional domain. We numerically solved it by expanding in a polynomial basis, suitably chosen to satisfy the boundary conditions. We derived a closed form expression for the eigenvalue problem, which is a generalized eigenvalue matrix equation, and numerically approximated it. 

We have  assumed the free-end edge constraint, \ie the contact line slips freely  along the vertical wall (edge of the ice hole) while intersecting it orthogonally. However, experimental evidence \cite{cocciaro:1993, dussan:1979} reveals that the dynamic behavior of the contact line depends crucially on the contact angle, \ie the angle where the free surface meets a solid surface. It would be interesting to  consider either the pinned-end edge constraint \cite{benjamin:1979} or a dynamic contact line boundary conditions such as Hocking's linear wetting boundary condition \cite{hocking:1987a, hocking:1987b}, Dussan's nonlinear contact line model incorporating static contact angle hysteresis \cite{dussan:1979}, or a combination of Hocking's and Dussan's model that was recently proposed by Viola et al. \cite{viola:2018, viola:2019}. 

The eigenvalues from IFP are upper bounds for the corresponding eigenvalues for all other containers with coinciding free surface. It would therefore be of interest to know if the approximate eigenvalues presented here are perhaps upper or lower bounds for the true eigenvalues of IFP. In the absence of surface tension this was indeed considered in \cite{fox:1983,henrici:1970}. The methods for the upper bounds were summarized in \cref{sec:Intro}. For the lower bounds, Henrici et al. \cite{henrici:1970} utilize the domain monotonicity property along with an infinitely deep cylinder or trough to bound the eigenvalues from below. When appropriate, they improve the bound by implementing the Krylov-Bogoliubov inequality on the integral operator to provide a tighter lower bound. Fox et al. \cite{fox:1983} use the method of intermediate problems on an equivalent weighted problem on a region where solutions are known. Taking advantage of known eigenvectors of the unweighted sloshing problem on the simple region, Fox et al. \cite{fox:1983} produce much tighter lower bounds than otherwise observed while using low-dimensional matrices. While the domain monotonicity property holds when including surface tension, the other properties mentioned do not necessarily extend to the model considered here. Nonetheless, numerically bounding eigenvalues would be a significant advantage to this study and could be investigated in future work. 

We numerically study how the location of the high spot depends on $\bond$. Using a simple fixed-point iteration we determine the value of $\bond$ such that for any lower $\bond$ the high spot is on the boundary of the free surface. This result compliments previous work \cite{kulczycki:2009} which shows that the high spot is always on the interior for the IFP in the absence of surface tension. For physical intuition, we recall the fundamental sloshing profile for the circular hole IFP minimizes the free surface energy,
\begin{equation*}
\frac{\int_0^1 \xi_m^2 r \, dr + \frac{1}{\bond} \int_0^1 \left(\frac{m^2}{r} \xi_m^2 + \left(\xi_m'\right)^2 r \right)\, dr}{\int_0^1 (\widehat{S}_r\xi_m)(r) \xi_m(r) r dr}.
\end{equation*}
For simplicity, we scale the solution such that $\int_0^1 (\widehat{S}_r\xi_m)(r) \xi_m(r) r dr=1$. For large $\bond$ we are primarily minimizing $\int_0^1 \xi_m^2 r \, dr$, whereas for small $\bond$ we are primarily minimizing $\int_0^1 \left(\frac{1}{r} \xi_m^2 + \left(\xi_m'\right)^2 r \right)\, dr$. These different contributions to the free surface energy are shown in \cref{fig:Energy}. 
 \begin{figure}[t]
	\centering
		\subfloat[{$\int_0^1 \xi^2 r \, dr$ for $\bond \in [10^0,10^7]$}]{\includegraphics[width=0.48\textwidth,trim = {0.5cm 0.8cm 0.65cm 0.6cm},clip]{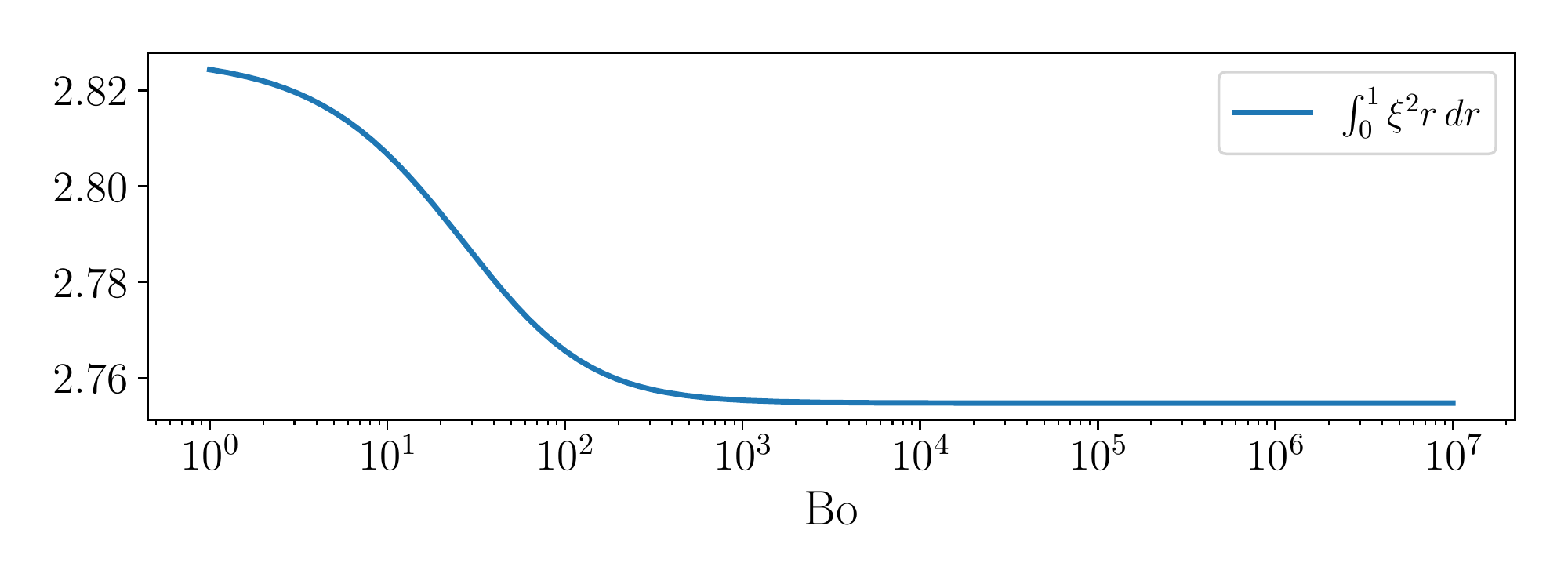}}
		\subfloat[{$\int_0^1 \left(\frac{1}{r} \xi^2 + \left(\xi'\right)^2 r \right)\, dr$ for $\bond \in [10^0,10^7]$}]{\includegraphics[width=0.48\textwidth,trim = {0.7cm 0.8cm 0.65cm 0.6cm},clip]{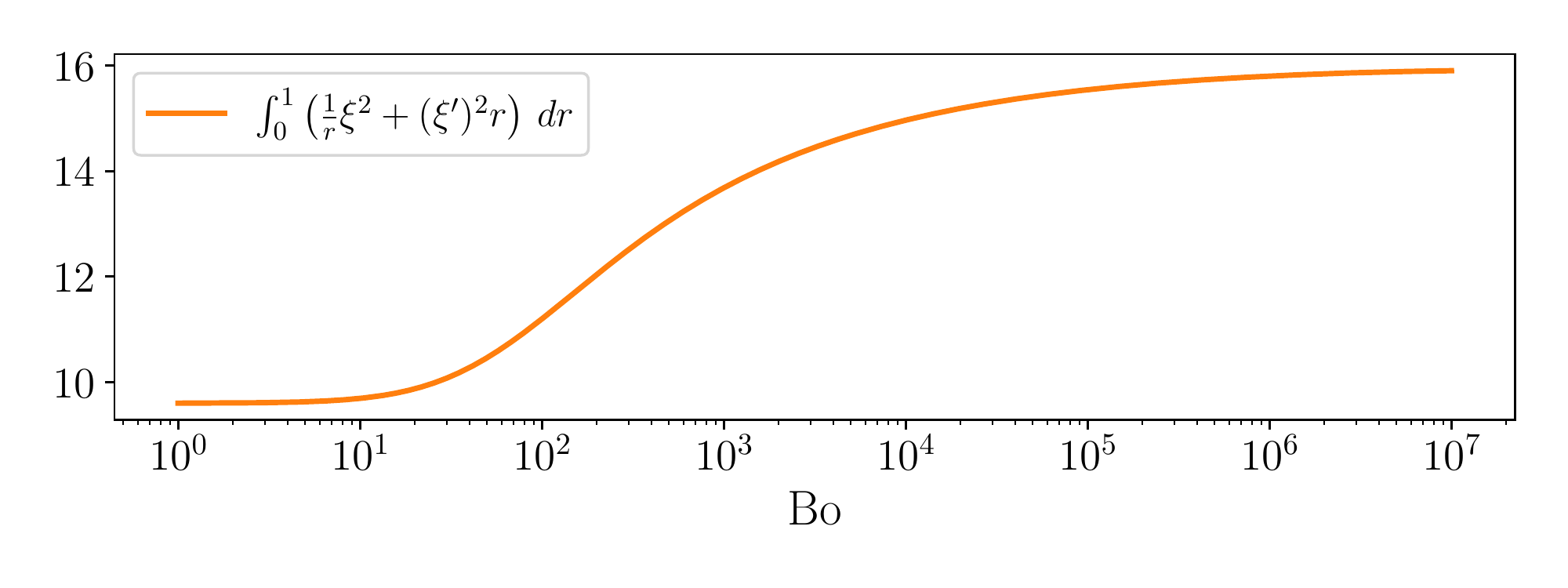}}
	\caption{Comparison of the different contributions to the free surface energy to illustrate how the fundamental solution minimizes one versus the other depending on $\bond$ with $m=1$.}
	\label{fig:Energy}
\end{figure}
Therefore, we pay a more significant penalty for variation in the derivative when $\bond$ is small and the intuition is that sloshing profiles with no interior extrema suffer less from this penalty. Motivated by this observation, we conjecture that \emph{for any shape domain, given sufficiently small $\bond$, the high spot is located on the boundary of the free surface}.


\section*{Acknowledgments} We would like to thank Akil Narayan and Fernando Guevara Vasquez for helpful conversations. 


\clearpage

\appendix


\section{Matrix Elements Computations} \label{sec:App3dMatrices}
The coefficients $M_{ij}^m$ follow immediately from the orthogonality of the $h_j^m$ polynomials and computation for $\tilde{L}_{ij}^m$ can be found in \cite{miles:1972}. Therefore, here we focus on showing that the stiffness matrix is diagonal and finding the values for those coefficients.

We first define $\tilde{K}_{ij}^m=\int_0^1 \left(\frac{m^2}{r}h_i^m h_j^m + (h_i^m)'(h_j^m)' r\right)\, dr $ so that we have $K_{ij}^m = \tilde{K}_{ij}^m+ \beta_i \tilde{K}_{i+1,j}^m + \beta_j \tilde{K}_{i,j+1}^m + \beta_i \beta_j \tilde{K}_{i+1,j+1}^m$. Now we compute the $\tilde{K}_{ij}^m$ elements and start by recalling that $h_j^m = \mu_j^m r ^m P_{j-1}^{(0,m)}(2r^2-1)$. From \cite[18.9.15]{NIST} we have that 
$$(h_j^m)' = \frac{m}{r}h_j^m + 2 \mu_j^m r^{m+1} (m+j) P_{j-2}^{(1,m+1)}(2r^2-1),$$
and substituting this along with the definition of $h_j^m$ into $\tilde{K}_{ij}$ we obtain
\begin{equation*}
	\tilde{K}_{ij}^m = 2\mu_j^m \mu_i^m \left(m^2 A_{ij}^m +\frac{m}{\mu_j^m} (i+m) B_{ij}^m + \frac{m}{\mu_i^m}(j+m)B_{ji}^m + (j+m)(i+m) C_{ij}^m\right),
\end{equation*}
where
\begin{align*}
	A_{ij}^m &= \int_0^1 r^{m-1} P_{j-1}^{(0,m)}(2r^2-1) r^{m-1} P_{i-1}^{(0,m)}(2r^2-1) r\, dr, \\
	B_{ij}^m &= \int_{0}^1 h_j^m r^m P_{i-2}^{(1,m+1)}(2r^2-1)\, dr,  \\
	C_{ij}^m &= 2\int_{0}^1 r^{m+1} P_{j-2}^{(1,m+1)} (2r^2-1) r^{m+1} P_{i-2}^{(1,m+1)}(2r^2-1)\, dr.
\end{align*}
To simplify $A_{ij}^m$ we relate the $P_{j-1}^{(0,m)}$ Jacobi polynomials to the $P_{j-1}^{(0,m-1)}$ Jacobi polynomials in order to use the orthogonality of the $h_j^{m-1}$ polynomials. First use the symmetry relation \cite[Table 18.6.1]{NIST} and then the connection sum formula \cite[18.18.14]{NIST}, with $\gamma = m$, $\alpha = m-1$ , and $\beta = 0$, to get
\begin{equation*} 
	P_{j-1}^{(0,m)}(x) = (-1)^{j} \frac{(j-1)!}{(m+1)_{j-1}} \sum_{\ell=1}^{j} \frac{m+2(\ell-1)}{m} \frac{(m)_{\ell-1}}{(\ell-1)!} (-1)^{\ell}P_{\ell-1}^{(0,m-1)}(x).
\end{equation*}
Since $m+2(\ell-1) = \frac{1}{2}(\mu_{\ell}^{m-1})^2$ we have
\begin{equation} \label{eq:ConSum1}
	r^{m-1} P_{j-1}^{(0,m)}(2r^2-1) = \frac{(-1)^{j}}{m} \frac{(j-1)!}{(m+1)_{j-1}} \sum_{\ell=1}^{j} \frac{\mu_{\ell}^{m-1}}{2}\frac{(m)_{\ell-1}}{(\ell-1)!} (-1)^{\ell}h_{\ell}^{m-1}.
\end{equation}
Let $\kappa_{ij}^m = (-1)^{i+j} \frac{(j-1)!(i-1)!}{(m+1)_{j-1}(m+1)_{i-1}}$ and $\nu = \min(i,j)$, then we have
\begin{equation*}
	A_{ij}^m = \frac{\kappa_{ij}^m}{m^2} \sum_{\ell=1}^{j} \sum_{k=1}^{i} \frac{\mu_{\ell}^{m-1} \mu_k^{m-1}}{4\kappa_{\ell k}^{m-1}} \delta_{\ell k}
	= \frac{\kappa_{ij}^m}{m^2} \sum_{\ell=0}^{\nu-1}(m+2 \ell)\left(\frac{(m)_\ell}{\ell!} \right)^2
	= \frac{\kappa_{ij}^m}{m} \left(\frac{(m+1)_{\nu-1}}{(\nu-1)!}\right)^2.
\end{equation*}
The last simplification follows from $\ds \left(\frac{(\nu-1)!}{(m+1)_{\nu-1}}\right)^2 \sum_{\ell=0}^{\nu-1}(m+2 \ell)\left(\frac{(m)_\ell}{\ell!} \right)^2 = m$, which can be easily proved by induction om $\nu$.
Therefore, 
\begin{equation*}
	A_{ij}^m = \frac{(-1)^{i+j}}{2m} \frac{(\min(i,j))_m}{(\max(i,j))_m}.
\end{equation*}
Next, we turn to $B_{ij}^m$ and start by again using the connection sum formula \cite[18.18.14]{NIST}, this time with $\gamma = 1$, $\alpha = 0$ , and $\beta = m$, to note that
\begin{equation} \label{eq:ConSum2}
	r^m P_{i-1}^{(1,m)}(2r^2-1) = \frac{1}{2(m+i)} \sum_{\ell = 1}^{i} \mu_{\ell}^m h^m_{\ell}(x).
\end{equation}
Therefore, using the contiguous relation \cite[18.9.3]{NIST} for $P_{i-2}^{(1,m+1)}$ along with the above relation for $r^m P_{i-1}^{(1,m)}$ and \cref{eq:ConSum1} for $r^{m} P_{j-1}^{(0,m+1)}$ we have
\begin{equation*}
r^m P_{i-2}^{(1,m+1)}(2r^2-1) = \frac{1}{2} \sum_{\ell=1}^{i} \frac{1}{m+i} \left(1 - (-1)^{i+\ell}  \frac{(\ell)_m}{(i)_m} \right) \mu_{\ell}^m h_{\ell}^m(r). 
\end{equation*}
From the orthogonality of $h_j^m$ we now have
\begin{equation*}
	B_{ij}^m = 
	\begin{dcases}
		\frac{\mu_j^m}{2(m+i)}\left(1 - (-1)^{i+j} \frac{(j)_m}{(i)_m}\right) & j < i, \\
		0 & j \geq i.
	\end{dcases}
\end{equation*}
Lastly, we turn to $C_{ij}^m$ and use \cref{eq:ConSum2} for $r^{m+1}P_{j-2}^{(1,m+1)}$ so that we can again use the orthogonality of the $h_j^m$ polynomials. After simplifying we get  
\begin{equation}
	C_{ij}^m  = \frac{\min(i,j)-1}{\max(i,j)+m}
\end{equation}
With these expressions for $A_{ij}^m$, $B_{ij}^m$, and $C_{ij}^m$ we now have that
\begin{equation*}
	\tilde{K}_{ij}^m = \mu_j^m \mu_i^m \left(m+2(\min(i,j)-1)(\min(i,j)+m) \right).
\end{equation*} 
Let us write the stiffness matrix as $K_{ij}^m = \Omega_{ij}^m + \beta_i \Omega_{i+1,j}^m$ with $\Omega_{ij}^m = \tilde{K}_{ij}^m+\beta_j \tilde{K}_{i,j+1}^m$. Due to the symmetry of $K$ we assume without loss of generality that $j<i$ such that 
$$\tilde{K}_{ij}^m = \mu_j^m \mu_i^m \left(m+2(j-1)(j+m) \right).$$ 
Then, from the definition of $\beta_{j}$ we have that $\Omega_{ij}^m=0$ and $\Omega_{i,j+1}^m=0$ if $j\neq i$.
In the case that $i = j$ it follows from direct calculation that $K_{jj}^m = - \mu_j^m\mu_{j+1}^m\left(\mu_j^{m+1}\right)^2 \beta_j$.


\section{Integral operator evaluated at \texorpdfstring{${r=1}$}{TEXT}}\label{sec:AppIOat1}
We start with the Hankel transform of $h_j^m$, given in \cite{miles:1972}, to determine that 
\begin{equation*}
	\widehat{S} h_j^m(1)= (-1)^{j-1} \mu_j^m \int_0^\infty \frac{J_{2j+m-1}(k)J_m(k)}{k}\, dk.
\end{equation*}
Using the Weber-Schafheitlin formula \cite[10.22.57]{NIST}, with $\alpha = 1 = \gamma$, $\mu = 2j+m-1$, and $\nu = m$, we have
\begin{align*}
	\int_0^\infty \frac{J_{2j+m-1}(k)J_m(k)}{k}\, dk
	&=\frac{\Gamma\left(j+m-\frac{1}{2}\right)}{2\Gamma\left(\frac{3}{2}-j\right)\Gamma\left(j+\frac{1}{2}\right)\Gamma\left(j+m+\frac{1}{2}\right)}, \\
	&=\frac{(-1)^j}{2\pi\left(j+m-\frac{1}{2}\right)\left(j-\frac{1}{2}\right)},
\end{align*}
where we have simplified in a similar fashion as done in \cite{miles:1972}. It follows that
\begin{equation*}
	\widehat{S} h_j^m(1) = \frac{\mu_j^m}{2\pi\left(j+m-\frac{1}{2}\right)\left(\frac{1}{2}-j\right)}.
\end{equation*}

\printbibliography


\end{document}